\newtheorem{theorem}{Theorem}[section]
\newtheorem{defin}[theorem]{Definition}
\newtheorem{Prop}[theorem]{Proposition}
\newtheorem{lem}[theorem]{Lemma}
\theoremstyle{definition}
\newtheorem{ex}[theorem]{Example}
\newtheorem{remark}[theorem]{Remark}
\newtheorem{remarks}[theorem]{Remarks}
\theoremstyle{plain}
\newcommand{\NN}{\mathbb{N}}
\newcommand{\RR}{\mathbb{R}}
\newcommand{\PP}{\mathbb{P}}
\newcommand{\HH}{\mathbb{H}}
\newcommand{\FF}{\mathbb{F}}
\newcommand{\GG}{\mathbb{G}}
\newcommand{\TT}{\mathbb{T}}
\newcommand{\EE}{\mathbb{E}}
\newcommand{\cS}{\mathcal{S}}
\newcommand{\cF}{\mathcal{F}}
\newcommand{\cG}{\mathcal{G}}
\newcommand{\cE}{\mathcal{E}}
\newcommand{\cL}{\mathcal{L}}
\newcommand{\FE}{\mathfrak{E}}
\newcommand{\taure}{\tau^{re}}
\newcommand{\tauex}{\tau^{ex}}
\newcommand{\fatQ}{\mathbf{Q}}
\newcommand{\frace}{\mathfrak{E}}
\newcommand{\fracef}{\mathfrak{F}}
\newcommand{\iv}{\overset{d}{=}}
\DeclareMathOperator*{\asure}{\;a.s.}
\newcommand{\inV}{\overset{d}{\to}}
\DeclareMathOperator*{\inPpi}{\overset{\mathbb{P}_\pi}{\to}}
\DeclareMathOperator{\Exp}{Exp}
\begin{document}
	\title{Markov-modulated generalized Ornstein-Uhlenbeck processes\\ and an application in risk theory}
	\author{Anita Behme\thanks{Technische Universit\"at
			Dresden, Institut f\"ur Mathematische Stochastik, Zellescher Weg 12-14, 01069 Dresden, Germany, \texttt{anita.behme@tu-dresden.de} and \texttt{apostolos.sideris@tu-dresden.de}, phone: +49-351-463-32425, fax:  +49-351-463-37251.}\; and Apostolos Sideris$^\ast$}
	\date{\today}
	\maketitle
	
	\vspace{-1cm}
	\begin{abstract}
	 We derive the Markov-modulated generalized Ornstein-Uhlenbeck process by embedding a Markov-modulated random recurrence equation in continuous time. The obtained process turns out to be the unique solution of a certain stochastic differential equation driven by a bivariate Markov-additive process. We present this stochastic differential equation as well as its solution explicitely in terms of the driving Markov-additive process. Moreover, we give necessary and sufficient conditions for strict stationarity of the Markov-modulated generalized Ornstein-Uhlenbeck process, and prove that its stationary distribution is given by the distribution of a specific exponential functional of Markov-additive processes. Finally we propose an application of the Markov-modulated generalized Ornstein-Uhlenbeck process as Markov-modulated risk model with stochastic investment. This generalizes Paulsen's risk process to a Markov-switching environment. We derive a formula in this risk model that expresses the ruin probability in terms of the distribution of an exponential functional of a Markov-additive process.
	\end{abstract}
	
	2020 {\sl Mathematics subject classification.} 60H10, 60J25, 60G51, (primary), 60J57, 91G05, 60K37 (secondary)\\
	
	{\sl Keywords:} exponential functional; generalized Ornstein-Uhlenbeck process; L\'evy process; Markov additive process; Markov-modulated random recurrence equation; Markov-switching model; risk theory; ruin probability; stationary process
	
	\section{Introduction}\label{S0}
	\setcounter{equation}{0}
	
	Given a bivariate Lévy process $(\xi,\eta)=(\xi_t,\eta_t)_{t\geq 0}$,  the \emph{generalized Ornstein-Uhlenbeck (GOU) process}  $(V_t)_{t\geq 0}$ driven by $(\xi,\eta)$ is defined as 
	\begin{align}\label{GOULevyexplizit}
	V_t=e^{-\xi_t}\left(V_0+\int_{(0,t]} e^{\xi_{s-}}d\eta_s\right) ,\quad t\geq 0.
	\end{align}
	This class of processes has been introduced in 1988 by de Haan and Karandikar \cite{DEHAAN+KARANDIKAR_EmbeddingSDEcontinuoustimeprocess} as continuous-time analogue to discrete-time solutions of certain random recurrence equations. Moreover, it has been shown in  \cite{DEHAAN+KARANDIKAR_EmbeddingSDEcontinuoustimeprocess} that $(V_t)_{t\geq 0}$ is the unique solution of the SDE 
	\begin{align}\label{GOUSDELevy}
	dV_t=V_{t-}dU_t+dL_t,\quad t\geq 0,
	\end{align}
	for another bivariate Lévy process $(U_t,L_t)_{t\geq 0}$, which is in direct relation to $(\xi_t,\eta_t)_{t\geq 0}$.\\
	 Special cases of GOU processes date back to the early 20th century, cf. \cite{EINSTEIN_History1905} and \cite{ORNSTEIN+UHLENBECK_kHistory1930}. Nowadays, GOU processes have become a permanent tool in stochastic modelling, appearing in numerous applications such as finance and insurance, see \cite{BNS01}, \cite{KLUEPPELBERG+LINDNER+MALLER_COGARCHstationarityandSecondorderBehaviour2004}, or \cite{PAULSEN_RisksStochasticEnvironment1992} for some examples. 
		 The properties of GOU processes have been studied thoroughly in many papers, see for instance \cite{BEHME+LINDNER+MALLER_StationarySolutionsSDEGOU},  \cite{Kevei}, \cite{LINDNER+MALLER_LevyintegralsStationarityGOUP}, or \cite{MALLER+MUELLER+SZIMAYER_OUPExtensions}. 
	
Also dating back to the 80's, see e.g. \cite{Janssen}, \cite{hamilton} or \cite{Reinhard}, Markov-switching models have become a popular tool in finance and other areas;  a collection of possible applications being presented in \cite{Hamiltonbook}. It is thus natural to aim for GOU processes with a Markov-switching behaviour. 
Several studies in this direction exist. In \cite{HUANG+SPREJ_MarkovmodulatedOUP2014}, \cite{LINDSKOG+MAJUMDER_ExactLongtimebehaviourMAP2019}, and \cite{ZHANG+WANG_StationarydistributionOUP2stateMarkovswitching2017} the authors consider so-called Markov-modulated Ornstein-Uhlenbeck (MMOU) processes, that is Markov-modulated versions of the classical Ornstein-Uhlenbeck process, i.e. of \eqref{GOULevyexplizit} for a deterministic process $\xi_t=\lambda t$, $t\geq 0$, and a Brownian motion with drift $\eta_t=\gamma_t + \sigma B_t$, $t\geq 0$. In \cite{ZHAO_MarkovmodulatedOUPwithapplicationsinAlberta2012} a Markov-modulated version of the Lévy-driven Ornstein-Uhlenbeck process (i.e. \eqref{GOULevyexplizit} with $\xi_t=\lambda t$, $t\geq 0$ and general Lévy process $(\eta_t)_{t\geq 0}$) is used to model electricity spot prices. 
However, despite the fact that \cite{DEHAAN+KARANDIKAR_EmbeddingSDEcontinuoustimeprocess} and \cite{hamilton} have been published about 30 years ago and in the meantime GOU processes and Markov-modulated models have become prominent tools for numerous applications, up to now there exists no thorough theoretical description of a GOU process with Markov-modulated behaviour in the literature. 

 It is the first aim of this paper to close this gap, and thus in Section \ref{S1}, we will define a Markov-modulated version of the GOU process following the approach of \cite{DEHAAN+KARANDIKAR_EmbeddingSDEcontinuoustimeprocess}. That is, we derive the  \emph{Markov-modulated generalized Ornstein-Uhlenbeck (MMGOU)} process as continuous-time analogue to discrete-time solutions of Markov-modulated random recurrence equations. Hereby we will allow for a rather general Markov-modulation in the sense that the background driving Markov chain may have a countably infinite state space, and that jumps in the background driving Markov chain may induce additional jumps in $(\xi,\eta)$.
	
	Having defined the MMGOU process it is our second purpose to determine necessary and sufficient conditions for strict stationarity of the process and to describe its stationary distribution. Apart from the classical case of the Lévy-driven GOU process \eqref{GOULevyexplizit} studied in \cite{LINDNER+MALLER_LevyintegralsStationarityGOUP} and \cite{BEHME+LINDNER+MALLER_StationarySolutionsSDEGOU}, to our knowledge this topic is only covered in the literature  for the special case of a Markov-modulated Ornstein-Uhlenbeck process, see e.g. \cite{LINDSKOG+MAJUMDER_ExactLongtimebehaviourMAP2019}, or  \cite{ZHANG+WANG_StationarydistributionOUP2stateMarkovswitching2017}. We provide necessary and sufficient conditions for the existence of a strictly stationary MMGOU process in Section \ref{S3} of this paper and prove that - as in the Lévy case - given it exists, its stationary distribution can be described by the distribution of a certain exponential functional. However, in the Markov-modulated situation the exponential functional is driven by time-reverted processes and does not necessarily have to converge in an almost sure sense to ensure stationarity. 
	
	We end this paper with a short study of a Markov-modulated risk model that fits nicely into the framework of MMGOU processes and exponential functionals of MAPs. Our model combines a Markov-modulated version of the classical Cram\'er-Lundberg risk process with an investment possibility that is also modulated by the background driving Markov chain. It can therefore be seen as a generalization of Paulsen's risk process \cite{PAULSEN_RisksStochasticEnvironment1992} to the Markov-switching situation.\\ Note that first Markov-modulated risk models have already been studied in \cite{Janssen} and \cite{Reinhard} in the 80's. The first Markov-modulated risk process where risk reserves can be invested into a stock index following a geometric Brownian motion has been introduced by \cite{Baeuerle}. In \cite{Ramsden} a generalization of this model has been considered, in which the investment returns still follow a geometric Brownian motion, but this is also influenced by the external Markov chain. The developed theory of the MMGOU process now allows to go one more step further, in that investment returns may be generated by a process with jumps, instead of just a Brownian motion. Moreover, other than in the mentioned studies, our model allows for joint jumps of the modulating Markov chain, the surplus generating process and the investments generating process that may be interpreted as market shocks at times of regime switches. In Theorem \ref{thm-ruinMMpaulsen} we present a formula for the ruin probability in model. This formula is based on the distribution of a stochastic exponential of a bivariate MAP associated to the model; in other words, the ruin probability is governed by the stationary distribution of an associated MMGOU process.

	\section{The definition of a Markov-modulated GOU process}\label{S1}
	\setcounter{equation}{0}
	
	Extending an earlier result from Wolfe \cite{WOLFE_ContinuousanalogueOUSDE},  De Haan and Karandikar  \cite{DEHAAN+KARANDIKAR_EmbeddingSDEcontinuoustimeprocess} introduced the generalized Ornstein-Uhlenbeck process \eqref{GOULevyexplizit} as continuous-time analogue of a solution to a random recurrence equation with i.i.d. coefficients. In order to derive the Markov-modulated GOU process in a similar way as a continuous-time analogon to solutions of Markov-modulated random recurrence equations in Section \ref{S2} below, we first recall some basic facts on Markov additive processes in Section \ref{S1a} and derive some preliminary results concerning the stochastic exponential of Markov additive processes in Section \ref{S1b}. 
	
	\subsection{Some preliminaries on Markov additive processes}\label{S1a}
	
	Throughout the paper, let $(\Omega,\mathcal{F},\mathbb{F},\PP)$ be a filtered probability space and $(S,\mathcal{S})$ a measurable space, where we assume $S$ to be at most countable. Let $(X,J)=(X_t,J_t)_{t\geq 0}$ be a Markov process on $\RR^d\times S$, $d\geq 1$. The filtration $\mathbb{F}=\left(\mathcal{F}_t\right)_{t\geq 0}$ shall always satisfy the usual conditions, and if not stated otherwise we choose it  to be the smallest filtration satisfying the usual conditions for which $(X,J)$ is adapted. For any $j\in S$ we write $\PP_j(\cdot):=\PP(\cdot|J_0=j)$ and $\EE_j[\cdot]$ for the expectation with respect to $\PP_j$. Convergence in $\PP_j$-probability will be denoted as $\overset{\PP_j}\longrightarrow$. Equality and convergence in distribution will be denoted as $\overset{d}=$ and $\overset{d}\longrightarrow$, respectively.
		
	We will use the standard definition of a Markov additive process, cf. \cite{ASMUSSEN_AppliedProbandQueues}, that we present here in a multivariate setting as it has been already considered e.g. in \cite{CINLAR_MAP11972, CINLAR_MAP21972}.

	\begin{defin} \label{DefMAP} A $d+1$-dimensional continuous time Markov process $(X_t,J_t)_{t\geq 0}$ on $\RR^d\times S$ is called a \emph{($d$-dimensional) Markov additive process with respect to $\FF$ ($\FF$-MAP)}, if for all $s,t\geq 0$ and for all bounded and measurable functions $f:\RR^d\to\RR$, $g:S\to \RR$
	\begin{align}\label{MAPdefinition}
	\EE_{J_0}\left[f(X_{s+t}-X_s)g(J_{s+t})|\mathcal{F}_s\right]=\EE_{J_s}\left[f(X_t)g(J_t)\right].
	\end{align}
	\end{defin}
	
	Given a MAP $(X,J)$ the marginal process $J$ is often called \emph{Markovian component} or \emph{Markov driving chain/process}, sometimes the name \emph{modulator} is use. The process $X$ is typically called the \emph{additive component}, and in practice this is the process of interest. 
	
	In our derivations we shall also rely on the following extension of \eqref{DefMAP}. 
	
	\begin{lem}\label{lem-MAPproperty} The condition \eqref{MAPdefinition} in Definition \ref{DefMAP} implies (and is thus equivalent to)
		\begin{equation} \label{MAPpropextend}
		\PP_{J_0}(C_s, J_{s+t}\in D|\cF_s) = \PP_{J_s}(C_0, J_{t}\in D)		
		\end{equation}
		for all $C_s\in \sigma(X_{s+u}-X_s, 0\leq u\leq t)$ and corresponding events $C_0\in \sigma(X_u, 0\leq u\leq t)$, and for all $D\in \cS$.
	\end{lem}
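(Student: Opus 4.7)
The strategy is to reduce \eqref{MAPpropextend} to \eqref{MAPdefinition} in three stages: first, strengthen \eqref{MAPdefinition} to an identity for arbitrary bounded measurable \emph{joint} functions of $(X_{s+t}-X_s, J_{s+t})$; second, iterate this strengthened identity to a multi-time version covering cylinder events in $\sigma(X_{s+u}-X_s,\,0\leq u\leq t)$; and third, pass from cylinder events to arbitrary $C_s$ by a monotone-class argument. For the first stage I would exploit the countability of $S$: every bounded measurable $\tilde F\colon\RR^d\times S\to\RR$ decomposes as $\tilde F(x,j)=\sum_{j'\in S}\tilde F(x,j')\mathbf{1}_{\{j'\}}(j)$, and applying \eqref{MAPdefinition} termwise with $f(x)=\tilde F(x,j')$ and $g(j)=\mathbf{1}_{\{j'\}}(j)$, then invoking dominated convergence, yields
\[
\EE_{J_0}\bigl[\tilde F(X_{s+t}-X_s, J_{s+t})\mid\cF_s\bigr] = \EE_{J_s}\bigl[\tilde F(X_t, J_t)\bigr].
\]

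For the second stage I would prove by induction on $n$ that, for every $0\leq u_1\leq\cdots\leq u_n\leq t$ and every bounded measurable $F\colon\RR^{dn}\times S\to\RR$,
\[
\EE_{J_0}\bigl[F(X_{s+u_1}-X_s,\ldots,X_{s+u_n}-X_s, J_{s+t})\mid\cF_s\bigr]=\EE_{J_s}\bigl[F(X_{u_1},\ldots,X_{u_n}, J_t)\bigr].
\]
The base case $n=1$ is precisely stage one. For the induction step I would condition on $\cF_{s+u_1}$, treat the $\cF_{s+u_1}$-measurable quantity $x:=X_{s+u_1}-X_s$ as a frozen parameter, and rewrite $X_{s+u_i}-X_s = x+(X_{s+u_i}-X_{s+u_1})$ for $i\geq 2$. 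Applying the inductive hypothesis at the shifted starting time $s+u_1$ to the parameter-dependent function $\tilde F_x(y_2,\ldots,y_n,j):=F(x,x+y_2,\ldots,x+y_n,j)$ produces a deterministic $H$ evaluated at $(X_{s+u_1}-X_s,J_{s+u_1})$, and one more application of stage one collapses the outer conditional expectation to $\EE_{J_s}[H(X_{u_1},J_{u_1})]$; running the symmetric tower/MAP computation under $\PP_{J_s}$ shows that this equals the claimed right-hand side.

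Specialising $F$ to the product of indicators $\mathbf{1}_{A_1}\otimes\cdots\otimes\mathbf{1}_{A_n}\otimes\mathbf{1}_D$ immediately yields \eqref{MAPpropextend} whenever $C_s=\{X_{s+u_i}-X_s\in A_i,\,i=1,\ldots,n\}$ with corresponding $C_0=\{X_{u_i}\in A_i,\,i=1,\ldots,n\}$. Such cylinder sets form a $\pi$-system generating $\sigma(X_{s+u}-X_s,\,0\leq u\leq t)$; since for each fixed $D\in\cS$ both sides of \eqref{MAPpropextend} define probability kernels in $C_s$ that agree on this $\pi$-system, Dynkin's $\pi$-$\lambda$ theorem extends the identity to the whole $\sigma$-algebra. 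I expect the main technical nuisance to lie in the inductive step: carefully separating the frozen $\cF_{s+u_1}$-measurable part of $F$ from the post-$u_1$ increments, and verifying that the shifted MAP identity can legitimately be invoked under the conditional law, requires some bookkeeping but does not involve any idea beyond stage one.
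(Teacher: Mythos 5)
Your proposal is correct and follows essentially the same route as the paper: both extend \eqref{MAPdefinition} to finitely many time points in $[0,t]$ by induction and then pass from the resulting cylinder events to all of $\sigma(X_{s+u}-X_s,\,0\leq u\leq t)$ by a $\pi$-system/uniqueness-of-measures argument. The only difference is internal to the induction: the paper works with products of functions of the \emph{consecutive} increments $X_{s+t_k}-X_{s+t_{k-1}}$, so each step just peels off the last factor via \eqref{MAPdefinition}, which sidesteps the parameter-freezing and joint-measurability bookkeeping that your formulation with increments from the common base point $s$ (and conditioning at the first time $s+u_1$) requires.
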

	\begin{proof}
		Note first that it follows by induction from \eqref{MAPdefinition} for all $n\in \NN$, $0\leq s, 0=t_0\leq t_1\leq t_2\leq \ldots \leq t_n$ and bounded, measurable functions $f_1,\ldots, f_n, g$, that
		\begin{align*}
		\EE_{J_0}\left[\left(\prod_{k=1}^{n} f_k\left(X_{s+t_k}-X_{s+t_{k-1}} \right) \right) g(J_{s+t_n})|\mathcal{F}_s\right]=\EE_{J_s}\left[\left(\prod_{k=1}^{n} f_k\left(X_{t_k}-X_{t_{k-1}}\right)\right)g(J_{t_n})\right].
		\end{align*}
		This may be rewritten as
		\begin{align*}\PP_{J_0}\left( X_{s+t_1}- X_s \in B_1, \ldots,\right. & \left.  X_{s+t_n}-X_{s+t_{n-1}} \in B_n, J_{s+t_n}\in D |\cF_s  \right) \\
		&= \PP_{J_s} \left( X_{t_1} \in B_1, \ldots, X_{t_n}- X_{t_{n-1}} \in B_n, J_{t_n}\in D \right) \end{align*}
		for any Borel sets $B_1,\ldots, B_n \subset \RR\setminus\{0\}$ and $D\in \cS$. As the occuring events 
		$\{X_{s+t_k}- X_{s+t_{k-1}} \in B_k\}$ generate the sigma-fields
		$\sigma(X_{s+u}- X_s, 0\leq u\leq t)$ this implies by uniqueness of measures that
		$$\PP_{J_0}(C_s, J_{s+t_n}\in C|\cF_s) = \PP_{J_s}(C_0, J_{t_n}\in D)$$
		for all $C_s\in \sigma(X_{s+u}-X_s, 0\leq u\leq t)$ and corresponding events $C_0\in \sigma(X_u, 0\leq u\leq t)$ as claimed.
	\end{proof}

	From the definition of the MAP $(X,J)$ one can show (see \cite{ASMUSSEN_AppliedProbandQueues} for the case of $S$ being finite or \cite{CINLAR_MAP11972} for the original treatment), that since $S$ is at most countable, there exists a sequence of independent $d$-dimensional Lévy processes $\lbrace X^j,j\in S\rbrace$ with characteristic triplets $\lbrace(\gamma_{X^j},\Sigma^2_{X^j},\nu_{X^j}):j\in S\rbrace$ such that, whenever $J_t=j$ on some time interval $(t_1,t_2)$, the additive component $(X_t)_{t_1<t<t_2}$ behaves in law as $X^j$. To simplify notation we will also write the triplets as $(\gamma_{X}(j),\Sigma^2_{X}(j),\nu_{X}(j)):=(\gamma_{X^j},\Sigma^2_{X^j},\nu_{X^j})$.  \\
	Moreover, whenever the driving chain $J$ jumps, say at time $T_n$, from state $i$ to state $j$, it induces an additional jump $\Phi^{ij}_{X,n}$ for $X$, whose distribution $F_{X,n,T_n}^{ij}$ depends only on $(i,j)$ and neither on the jump time nor on the jump number, i.e. $F^{ij}_{X,n,T_n}= F_X^{ij}$, for some distribution $F_X^{ij}$, and it is independent of any other occurring random elements. This implies that we can always find a c\`adl\`ag modification of $(X,J)$ and we will therefore assume any MAP $(X,J)$ to be c\`adl\`ag from now on. Moreover it leads to the following path decomposition of the additive component	
	\begin{align}\label{MAPpathdescription}
	X_t=X_0+X_t^{(1)}+X_t^{(2)}=X_0+ \int_{(0,t]}   dX_{s-}^{J_{s-}}+\sum_{n\geq 1}\sum_{i,j\in S}\Phi_{X,n}^{ij} \mathds{1}_{\lbrace J_{T_{n-1}}=i,J_{T_{n}}=j,T_n\leq t\rbrace},
	\end{align}
	where $(T_n)_{n\geq 0}$ denotes the sequence of jump times of $J$ and $T_0=0$. Conversely, it can be easily checked that every process $(X,J)$, such that $(J_t)_{t\geq 0}$ is a continuous-time Markov chain with state space $S$ and $X$ has a representation as in \eqref{MAPpathdescription}, is a MAP. Furthermore, note that the process $(X^{(1)}_t)_{t\geq 0}$ in \eqref{MAPpathdescription} is a semimartingale whose characteristics are functionals of $J$, cf. \cite{grigelionis}. As $(X_t^{(2)})_{t\geq 0}$ clearly is a process of finite total variation and thus a semimartingale as well (cf. \cite[Thm. II.7]{PROTTER_StochIntandSDE}), we may and will use the additive component  $X$ of a MAP $(X,J)$ as stochastic integrator.\\
	We will always assume that $X_0=0$. We refer to $X^{(1)}$ as the \emph{pure switching part} of $X$.  In the case that $\Phi_{X,n}^{ij}\equiv 0$ a.s. for all $n\geq 0$ and all $i,j\in S$ we call $X$ a \emph{pure switching MAP}. 
	
	In this paper, we will mostly work with MAPs that have a two-dimensional additive component, i.e. we consider $(X,J)=((\zeta,\chi),J)=((\zeta_t,\chi_t),J_t)_{t\geq 0}\in \RR^2\times S$. For such bivariate MAPs the Lévy processes $X^j=(\zeta^j, \nu^j)$ and the additional jumps $\Phi_{X,n}^{ij}$ are two-dimensional. In particular in this case the triplets $(\gamma_{X^j},\Sigma^2_{X^j},\nu_{X^j})$ consist of a non-random vector $(\gamma_{\zeta^j},\gamma_{\chi^j})=(\gamma_{\zeta}(j),\gamma_{\chi}(j))\in\mathbb{R}^2$, a symmetric, non-negative definite $2\times 2$ matrix 
	\begin{align*}
	\Sigma^2_{X^j}=\Sigma^2_X(j)=:\begin{pmatrix}
	\sigma^2_{\zeta^j,\zeta^j}&\sigma_{\zeta^j,\chi^j}\\
	\sigma_{\zeta^j,\chi^j}&\sigma^2_{\chi^j,\chi^j}
	\end{pmatrix}=\begin{pmatrix}
	\sigma^2_{\zeta}(j)&\sigma_{\zeta,\chi}(j)\\
	\sigma_{\zeta,\chi}(j)&\sigma^2_{\chi}(j)
	\end{pmatrix}
	\end{align*}
	and a Lévy measure $\nu_{X^j}$ on $\mathbb{R}^2\setminus\lbrace(0,0)\rbrace$. Note that in this article we will switch between the notations of row vectors $(\zeta,\chi)$ and the corresponding column vectors $\left(\begin{smallmatrix} \zeta \\ \chi \end{smallmatrix}\right)$ depending on readability only - without marking transpositions. 
	
	For a more detailed description of Lévy processes and their characteristics we refer to \cite{SATO_LPinfinitelydivisibledistributions}.	For more details on MAPs we refer to Section \ref{S3a} below, the seminal works of \c{C}inlar \cite{CINLAR_MAP11972},\cite{CINLAR_MAP21972}, the modern textbook treatment in \cite{ASMUSSEN_AppliedProbandQueues}, and the comprehensive Appendix of \cite{DEREICH+DOERING+KYPRIANOU_RealselfsimilarprocessesStartedfromOrigin2017}.
	
	\subsection{Stochastic exponentials and Markov multiplicative processes}\label{S1b}
	
	Apart of Markov additive processes our derivation of the Markov-modulated GOU process also relies on a class of Markov processes that we will call \emph{Markov multiplicative processes} (see Definition \ref{DefMMP} below). In this subsection, we will introduce these and study their deep relation to the class of Markov additive processes.
	
	Recall that for any real-valued semimartingale $U=(U_t)_{t\geq 0}$ the \emph{(Doléans-Dade) stochastic exponential} of $U$, denoted by $\cE(U)=(\cE(U)_t)_{t\geq 0}$, is defined as the unique solution of the SDE
	\begin{align}\label{StochExpSDE}
	dZ_t=Z_{t-}dU_t,\;t >0,\quad Z_0=1.
	\end{align}
	It has the explicit representation (see e.g. \cite[Thm. II.37]{PROTTER_StochIntandSDE})
	\begin{align} \label{exponentialexplicit}
	\mathcal{E}(U)_t=\exp\left(U_t-\frac{1}{2}[ U^c,U^c]_t\right)\prod_{0<s\leq t}\left(1+\Delta U_s\right)e^{-\Delta U_s}, \quad t\geq 0,
	\end{align}
	with $U^c$ denoting the continuous local martingale part of $U$, $[\cdot,\cdot]$ the quadratic variation process, and $\Delta U_t:=U_t-U_{t-}$ denoting the jump of $U$ at time $t>0$. It is well known, and directly observable from the above formula, that the stochastic exponential remains strictly positive if and only if $U$ has no jumps of size less or equal to $-1$, while $U$ having no jumps of size $-1$ is equivalent to $\cE(U)_t \neq 0$ for all $t\geq 0$. \\
	Furthermore we recall the \emph{stochastic logarithm} $\mathcal{L}og(Z)$ of an $\RR\backslash\{0\}$-valued semimartingale $Z=(Z_t)_{t\geq 0}$ that is defined via  $\cL og(Z)_t=\int_{(0,t]}\frac{1}{Z_{s-}}dZ_s$, $t\geq 0$. It is clear from this definition and \eqref{StochExpSDE} that if $U$ is a semimartingale with no jumps of size $-1$, then $\cL og(\cE(U))=U$ a.s. 
	
	Via the exponential functional we may establish a close connection between one-dimensional MAPs and a class of stochastic processes that we will call \textit{Markov multiplicative processes}. In analogy to Definition \ref{DefMAP} we define them as follows. 
	
	\begin{defin}\label{DefMMP}
		A bivariate continuous time Markov process $(Z,J)=(Z_t,J_t)_{t\geq 0}$ on $\RR\backslash\{0\}\times S$ is a \emph{Markov multiplicative process with respect to the filtration $\FF$ ($\FF$-MMP)}, if for all $s,t\geq 0$ and for all bounded and measurable functions $f,g$
		\begin{align}\label{DefMMPproperty}
		\EE_{J_0}\left[f\left(\frac{Z_{s+t}}{Z_s}\right)g(J_{s+t})|\mathcal{F}_s\right]=\EE_{J_s}\left[f(Z_t)g(J_t)\right].
		\end{align}
		If $(Z,J)$ is an MMP, we refer to $Z$ as its \emph{multiplicative component}, while $J$ is its \emph{Markovian component}.
		\end{defin}
	
	Note that although a general definition of multivariate MMPs could have been given similarly, we restrict ourselves to the one-dimensional case, as we will not need the multivariate version in this paper. This allows us to avoid unnecessary technicalities that would arise when considering non-commutative matrix multiplications. 
	
	In analogy to Lemma \ref{lem-MAPproperty} one can prove the following.
	
	\begin{lem}\label{lem-MMPproperty} The condition \eqref{DefMMPproperty} in Definition \ref{DefMMP} implies (and is thus equivalent to)
		\begin{equation*}
		\PP_{J_0}(C_s, J_{s+t}\in D|\cF_s) = \PP_{J_s}(C_0, J_{t}\in D)		
		\end{equation*}
		for all events $C_s\in \sigma(Z_{s+u}/Z_s, 0\leq u\leq t)$ and corresponding events $C_0\in \sigma(Z_u, 0\leq u\leq t)$, and for all $D\in \cS$.
		\end{lem}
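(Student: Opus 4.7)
The plan is to follow the proof of Lemma \ref{lem-MAPproperty} line by line, replacing the additive increments $X_{s+t_k}-X_{s+t_{k-1}}$ by the multiplicative ratios $Z_{s+t_k}/Z_{s+t_{k-1}}$. The two steps are (i) iterate \eqref{DefMMPproperty} to obtain a finite-dimensional identity for consecutive ratios, and (ii) extend this to arbitrary events in $\sigma(Z_{s+u}/Z_s,\,0\leq u\leq t)$ by a measure-uniqueness argument.

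For step (i), I would prove by induction on $n\in\NN$ that for all $s\geq 0$, $0=t_0\leq t_1\leq\cdots\leq t_n$, and bounded measurable functions $f_1,\ldots,f_n,g$,
\begin{align*}
\EE_{J_0}\Biggl[\Biggl(\prod_{k=1}^{n} f_k\!\left(\frac{Z_{s+t_k}}{Z_{s+t_{k-1}}}\right)\Biggr) g(J_{s+t_n})\,\Big|\,\cF_s\Biggr]
=\EE_{J_s}\Biggl[\Biggl(\prod_{k=1}^{n} f_k\!\left(\frac{Z_{t_k}}{Z_{t_{k-1}}}\right)\Biggr) g(J_{t_n})\Biggr].
\end{align*}
The base case $n=1$ is precisely \eqref{DefMMPproperty}. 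For the induction step, I condition first on $\cF_{s+t_{n-1}}$, pull the $\cF_{s+t_{n-1}}$-measurable first $n-1$ factors outside, and apply \eqref{DefMMPproperty} to the remaining inner conditional expectation; this collapses to a measurable function of $J_{s+t_{n-1}}$, which can be absorbed into a new $g$ so that the induction hypothesis applies.

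For step (ii), I would specialize to indicators $f_k=\mathds{1}_{B_k}$ for Borel sets $B_k\subset\RR\setminus\{0\}$ and $g=\mathds{1}_D$ for $D\in\cS$, which yields the desired probability identity on the $\pi$-system of cylinder events of the form $\{Z_{s+t_1}/Z_s\in B_1,\,\ldots,\,Z_{s+t_n}/Z_{s+t_{n-1}}\in B_n,\,J_{s+t_n}\in D\}$. These cylinders generate $\sigma(Z_{s+u}/Z_s,\,0\leq u\leq t)\vee\sigma(J_{s+t})$, as can be seen from the multiplicative telescoping identity $Z_{s+u}/Z_s=(Z_{s+u}/Z_{s+v})(Z_{s+v}/Z_s)$ for $0\leq v\leq u\leq t$ applied along a countable dense set of times; uniqueness of measures then transfers the identity to arbitrary $C_s\in\sigma(Z_{s+u}/Z_s,\,0\leq u\leq t)$ and the corresponding $C_0\in\sigma(Z_u,\,0\leq u\leq t)$, which delivers the claim. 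The only mildly delicate point I anticipate is that one cannot short-cut via a logarithm (since $Z$ is allowed to be negative), but because $Z$ is $\RR\setminus\{0\}$-valued the ratios and the telescoping step are everywhere well-defined, so the MAP proof carries over without change.
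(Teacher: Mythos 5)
Your proposal is correct and is exactly the argument the paper intends: the paper proves this lemma "in analogy to Lemma \ref{lem-MAPproperty}", i.e.\ by iterating the defining identity \eqref{DefMMPproperty} over consecutive multiplicative increments and then passing to the generated $\sigma$-field via a $\pi$-system/uniqueness-of-measures argument, which is precisely your two steps (and, as you note, since $Z$ is $\RR\setminus\{0\}$-valued the telescoping of ratios is everywhere well defined, so no logarithm is needed). The only cosmetic remark is that no countable dense set of times is required: the cylinder events over arbitrary finite time collections already generate $\sigma(Z_{s+u}/Z_s,\,0\leq u\leq t)$, which is preferable here since c\`adl\`ag paths are only established after Theorem \ref{1-1MAP-MMP}.
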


	The following theorem establishes a one-to-one correspondence between MMPs and MAPs whose additive components have no jumps of size $-1$. In the case of Lévy processes (i.e. for $|S|=1$) such a relation between additive and multiplicative processes  is known and goes back to results of Skorokhod \cite{skorokhod}.
	
	\begin{theorem}\label{1-1MAP-MMP}
A stochastic process $(Z,J)$ on $\RR\backslash\{0\}\times S$ is a Markov multiplicative process with respect to $\FF$ if and only if there exists a Markov additive process $(U,J)$ with respect to $\FF$ on $\RR\times S$ such that $\Delta U\neq -1$ and $Z=\mathcal{E}(U)$.
	\end{theorem}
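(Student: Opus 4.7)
The plan is to prove the two directions using the natural invertibility between stochastic exponential and stochastic logarithm, and to translate the Markov-type conditional distribution properties through this correspondence.

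For the ``if'' direction, assume $(U,J)$ is a MAP with $\Delta U \neq -1$ and set $Z := \mathcal{E}(U)$. The key observation, which follows directly from the explicit formula \eqref{exponentialexplicit}, is the multiplicative factorisation $\mathcal{E}(U)_{s+t} = \mathcal{E}(U)_s \cdot \mathcal{E}(U^{(s)})_t$ with $U^{(s)}_t := U_{s+t} - U_s$. Thus $Z_s \neq 0$ and $Z_{s+t}/Z_s$ is a measurable functional $\Psi_t$ of the increment path $(U^{(s)}_u)_{0\leq u\leq t}$. Applying Lemma \ref{lem-MAPproperty} to events of the form $\{\Psi_t((U^{(s)}_u)_{u\leq t})\in B\}\cap\{J_{s+t}\in D\}$, and then a standard monotone class argument to replace indicators by bounded measurable $f,g$, yields \eqref{DefMMPproperty}.

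For the ``only if'' direction, given an MMP $(Z,J)$ the candidate is $U:=\mathcal{L}og(Z)$. First I would argue that $Z$ is a semimartingale: since $S$ is at most countable and $J$ is a Markov chain, the MMP property restricted to a deterministic interval on which $J$ is constant reduces to an ordinary ($|S|=1$) multiplicative Markov property, so by the Skorokhod-type correspondence \cite{skorokhod} the process $Z$ on such an interval is the exponential of a Lévy process, hence a semimartingale; across jumps of $J$, $Z$ only acquires additional multiplicative jumps, preserving the semimartingale property. Since $Z$ is $\RR\setminus\{0\}$-valued, $U=\mathcal{L}og(Z)$ is well-defined, has no jump of size $-1$, and $\mathcal{E}(U)=Z$ by the standard inversion $\mathcal{E}(\mathcal{L}og(\cdot))=\mathrm{id}$ valid on nonvanishing semimartingales. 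The MAP property for $(U,J)$ then follows by inverting the argument of the first direction: $U_{s+t}-U_s = \mathcal{L}og(Z_{s+\cdot}/Z_s)_t$, so increments of $U$ starting from time $s$ are measurable functionals of $(Z_{s+u}/Z_s)_{0\leq u\leq t}$, and Lemma \ref{lem-MMPproperty} applied to these functionals directly yields \eqref{MAPpropextend} and thus \eqref{MAPdefinition}.

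The main obstacle is the semimartingale issue in the reverse direction: the bare MMP definition encodes only a conditional distributional property and does not a priori supply any path regularity needed to define $\mathcal{L}og(Z)$. The route above circumvents this by using the piecewise structure induced by the countable state space of $J$ so that the Lévy-case result of Skorokhod transfers pointwise in $j\in S$, with the behaviour at jump times of $J$ being of pure finite variation. A minor technical point still to be checked is that all constructions interact well with the filtration $\mathbb{F}$ (so that $U$ and its increments remain $\mathbb{F}$-adapted), which follows from the c\`adl\`ag convention on $(Z,J)$ together with the usual conditions on $\mathbb{F}$.
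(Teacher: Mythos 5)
Your forward direction is essentially the paper's argument in compressed form: the paper makes the ``functional of the increment path'' idea explicit by introducing the auxiliary MAP $((Y,K),J)$ with $Y_t=-U_t+\frac12\int_{(0,t]}\sigma_U^2(J_s)ds+\sum_{0<s\leq t}(\Delta U_s-\log|1+\Delta U_s|)$ and $K_t=\sum_{0<s\leq t}\mathds{1}_{\{\Delta U_s<-1\}}$, so that $Z_{s+t}/Z_s=(-1)^{K_{s+t}-K_s}e^{-(Y_{s+t}-Y_s)}$ and \eqref{DefMMPproperty} follows from \eqref{MAPdefinition}; your appeal to the factorisation $\cE(U)_{s+t}=\cE(U)_s\,\cE(U^{(s)})_t$ together with Lemma \ref{lem-MAPproperty} and a monotone class argument accomplishes the same, up to the routine remark that the stochastic exponential of the increment process is measurable with respect to the (completed) sigma-field $\sigma(U_{s+u}-U_s,\,0\leq u\leq t)$.

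The converse direction, however, has a genuine gap exactly at the point you flag yourself. You define $U=\cL og(Z)$, which presupposes that $Z$ is an $\FF$-semimartingale, and you derive this from the claim that on intervals where $J$ is constant the MMP property ``reduces to'' the L\'evy case so that Skorokhod's correspondence applies, with only multiplicative jumps added at switching times. But the intervals of constancy of $J$ are random, and extracting from \eqref{DefMMPproperty} that, conditionally on the path of $J$, the process $Z$ has independent and stationary multiplicative increments on those intervals is precisely the structure theorem for MMPs --- the multiplicative analogue of the path decomposition \eqref{MAPpathdescription} --- which is nowhere available and is not established by this two-sentence reduction; nor is it immediate that such a conditional piecewise description yields the unconditional $\FF$-semimartingale property you need in order to even write $\cL og(Z)$. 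The paper's proof is organised specifically to avoid this: it never needs $Z$ to be a semimartingale a priori. It sets $\bar Y_t=-\log|Z_t|$ and $\bar K_t=\sum_{0<s\leq t}\mathds{1}_{\{Z_s/Z_{s-}<0\}}$, both defined pathwise, verifies directly from Lemma \ref{lem-MMPproperty} that $((\bar Y,\bar K),J)$ is an $\FF$-MAP, and only then invokes the known MAP structure \eqref{MAPpathdescription} to define $U$ by the explicit pathwise formula \eqref{UfromYK}, after which $Z=\cE(U)$ and $\Delta U\neq -1$ are read off from \eqref{exponentialexplicit}; your candidate $U=\cL og(Z)$ then coincides with this process a posteriori. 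To repair your route you would have to prove the MMP structure/regularity statement you invoke (essentially redoing \c{C}inlar's decomposition on the multiplicative side), or reorganise the argument as the paper does. A small additional point, worth one line in either approach: taking $t=0$ in \eqref{DefMMPproperty} forces $Z_0=1$ a.s., which is what makes $\cE(\cL og(Z))=Z/Z_0=Z$ rather than $Z/Z_0$.
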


Note that this deep relation between MAPs and MMPs directly implies, that the multiplicative component of every MMP admits a càdlàg modification and we will therefore assume every multiplicative component of an MMP to be càdlàg from now on. Moreover, as every multiplicative component of an MMP can be identified with a stochastic exponential, we observe that it is a semimartingale.

\begin{proof}[Proof of Theorem \ref{1-1MAP-MMP}]
	Assume first that $(U,J)$ is a MAP with $\Delta U\neq -1$ and consider the auxiliary process $((Y,K),J)$ defined by
	\begin{align*}
	Y_t&:=-U_t+\frac{1}{2}\int_{(0,t]} \sigma_U^2(J_s)ds+\sum_{0<s\leq t} \left(\Delta U_s-\log|1+\Delta U_s|\right),\quad  t\geq 0,\\
	K_t&:=\sum_{0<s\leq t} \mathds{1}_{\lbrace \Delta U_s<-1\rbrace} , \quad t\geq 0.
	\end{align*}
	Observe that the path decomposition \eqref{MAPpathdescription} of $U$ leads to a path decomposition of $(Y,K)$ that is again of the form \eqref{MAPpathdescription}. Thus $((Y,K),J)$ is a bivariate MAP with respect to $\FF$. Furthermore,  a  direct computation using  \eqref{exponentialexplicit} shows that $Z_t=\cE(U)_t=(-1)^{K_t}e^{-Y_t}$  a.s. for all $t\geq 0$ since $[U^c,U^c]_t= \int_{(0,t]} \sigma_U^2(J_s) ds$, $t\geq 0$, which again is a direct consequence of \eqref{MAPpathdescription}.
	 Thus we obtain for any bounded and measurable functions $f,g$ and all $s,t\geq 0$ 
	\begin{align*}
	\EE_{J_0}\left[f\left(\frac{Z_{s+t}}{Z_s}\right)g(J_{s+t})|\mathcal{F}_s\right]
	&=\EE_{J_0}\left[f\left((-1)^{K_{s+t}-K_s} e^{-(Y_{s+t}-Y_s}\right)g(J_{s+t})\left.\right|\mathcal{F}_s\right]\\
	&= \EE_{J_0}\left[(f\circ h)\left( Y_{s+t}-Y_s, K_{s+t}-K_s \right)g(J_{s+t})\left. \right|\mathcal{F}_s\right]\\
	&= \EE_{J_s}\left[(f\circ h)\left( Y_{t},K_{t} \right)g(J_{t}) \right]\\
&=\EE_{J_s}\left[f\left((-1)^{K_t}e^{-Y_t}\right)g(J_t)\right]\\
&=\EE_{J_s}\left[f(Z_t)g(J_t)\right],
	\end{align*}
	with $h(x,y)=(-1)^y e^{-x}$. Hence  $(Z,J)$ is an MMP with respect to $\FF$ by Definition \ref{DefMMP}.\\	
	Conversely suppose $(Z,J)$ is an MMP with respect to $\FF$. Again define an auxiliary process $((\bar{Y},\bar{K}),J)$ by
	\begin{align*}
	\bar{Y}_t&:= - \log |Z_t|, \quad t\geq 0, \\
	\bar{K}_t&:= \sum_{0<s\leq t} \mathds{1}_{\{\bar{\Delta} Z_s<0\}}, \quad t\geq 0,
	\end{align*}
	where $\bar{\Delta}Z_s:=Z_s/Z_{s-}$ is a multiplicative increment of $Z$. Then clearly $Z_t=(-1)^{\bar{K}_t}e^{-\bar{Y}_t}$ a.s. for all $t\geq 0$. Further, we will prove in the following that $((\bar{Y},\bar{K}),J)$ is a bivariate $\FF$-MAP.\\
	Indeed, let $f,g$ be bounded and measurable functions, then for all $s,t\geq 0$ 
	\begin{align*}
	\EE_{J_0}[f(\bar{Y}_{s+t}-\bar{Y}_{s}, \bar{K}_{s+t}-\bar{K}_{s})g(J_{s+t})|\cF_s] &= \EE_{J_0}\Bigg[f\Bigg(-\log \left| \frac{Z_{s+t}}{Z_s} \right|, \sum_{s<u\leq s+t} \mathds{1}_{\{\bar{\Delta} Z_u<0\}} \Bigg) g(J_{s+t})|\cF_s \Bigg]\\
	&= \EE_{J_0}\Bigg[f\Bigg(-\log \left| \frac{Z_{s+t}}{Z_s} \right|, \sum_{s<u\leq s+t} \mathds{1}_{\big\{\frac{Z_u}{Z_s} \cdot \big(\frac{Z_{u-}}{Z_s}\big)^{-1} <0\big\}} \Bigg) g(J_{s+t})|\cF_s \Bigg]\\
	&= \EE_{J_s} \Bigg[f\Bigg(-\log \left| Z_{t} \right|, \sum_{0<u\leq t} \mathds{1}_{\{\bar{\Delta} Z_u<0\}} \Bigg) g(J_{t}) \Bigg]\\
	&= \EE_{J_s} [f (\bar{Y}_t , \bar{K}_t) g(J_{t})],
	\end{align*}
	where in the third line we have applied the MMP-property of $(Z,J)$ in the form given in Lemma \ref{lem-MMPproperty}. \\
	Further, define the process $(U,J)$ by setting
	\begin{equation}\label{UfromYK} U_t= -\bar{Y}_t-\frac{1}{2} [\bar{Y}^c,\bar{Y}^c]_t + \sum_{0<s\leq t} \Big(\Delta \bar{Y}_s - 1 + (-1)^{\Delta \bar{K}_s} e^{-\Delta \bar{Y}_s}\Big), \quad t\geq 0.\end{equation}
	As $((\bar{Y},\bar{K}),J)$ is a MAP, its additive component has a path decomposition \eqref{MAPpathdescription}. This implies that also $U$ has a path decomposition as given in \eqref{MAPpathdescription} and we may thus conclude that $(U,J)$ is an $\FF$-MAP. A simple computation using \eqref{exponentialexplicit} further shows that $Z=\cE(U)$. Finally, the fact that $U$ has no jumps of size $-1$ follows directly from \eqref{UfromYK}.	
\end{proof}

	\subsection{From Markov-modulated random recurrence equations to the \\Markov-modulated GOU process}\label{S2}
	
	We are now in the position to define the Markov-modulated GOU process as the continuous-time analogue of a solution to the \emph{Markov-modulated random recurrence equation (MMRRE)}
	\begin{align}\label{MMDifferenceEquation}
	Y_n=A_nY_{n-1}+B_n,\qquad n=1,2,\dots,
	\end{align}
	where $(A_n,B_n)_{n\geq 1}$ is a \emph{Markov-modulated sequence}, driven by a Markov chain $(M_n)_{n\in\NN_0}$ on $S$. Precisely, that means
	\begin{enumerate}[align=parleft, labelsep=1.6cm,leftmargin=*]
		\item[(MMRRE1)]
		$(A_1,B_1),(A_2,B_2),\dots$ are conditionally independent, given $M_0=j_0, M_1=j_1,\ldots$, for some $j_0,j_1,\ldots \in S$.
		\item[(MMRRE2)] The conditional law of $(A_n,B_n)$, given $M_0=j_0, M_1=j_1,\ldots$ for some $j_0,j_1,\ldots \in S$, depends only on $M_{n-1}$ and $M_n$ and is temporally homogeneous, i.e.
		\begin{align*}
		\PP\left((A_n,B_n)\in \cdot|M_{n-1}=j_{n-1}, M_n=j_n\right)=K_{j_{n-1},j_n}
		\end{align*}
		for a stochastic kernel $K$ from $S^2\to\RR^2$ and all $n\geq 1.$
		\item[(MMRRE3)] $Y_0$ is chosen conditionally independent of $((A_n,B_n),J_n)_{n\geq 1}$ given $J_0$.
	\end{enumerate}
Typically the Markov chain $(M_n)_{n\in\NN_0}$ is assumed to be ergodic to allow e.g. for a study of convergence of forward and backward iterations of MMRREs as done in \cite{ALSMEYER+BUCKMANN_StabilityPerpetuitiesMarkovianEnvironment}. For the results in this section however, ergodicity is not needed.

To derive a continuous time process $(V_t)_{t\geq 0}$ as an analogue to $(Y_n)_{n\in\NN}$, we require that for every $h>0$ the discretization $(V_{nh})_{n\geq 0}$ of $(V_t)_{t\geq 0}$ satisfies \eqref{MMDifferenceEquation} for $(A^{(h)}_n,B_n^{(h)})$ being a Markov-modulated sequence fulfilling (MMRRE1) - (MMRRE3) for some 
 Markov chain $(M_n^{(h)})_{n\in\NN}$ with state space $S$. Requiring that this divisibility property continues to hold for $h\to 0$ then implies that the process $(V_t)_{t\geq 0}$ has to satisfy
	\begin{align}\label{MAPGOUFunctionalEquation}
	V_t=A_{s,t}V_s+B_{s,t} \asure\quad\text{for all } 0\leq s\leq t,
	\end{align}
	for real-valued stochastic functionals $(A_{s,t}, B_{s,t})_{0\leq s\leq t}$ that are modulated by a
	 continuous-time Markov process $J=(J_t)_{t\geq 0}$ with state space $S$, such that the following properties hold (see also \cite{DEHAAN+KARANDIKAR_EmbeddingSDEcontinuoustimeprocess} for the original treatment):
	\begin{enumerate}[align=parleft, labelsep=1.6cm,leftmargin=*]
		\item[(MMGOU1)] Consistency: $A_{s,t}=A_{s,u}A_{u,t}$  and $B_{s,t}=A_{u,t}B_{s,u}+B_{u,t}$ a.s. for all $0\leq s\leq u\leq t$.
		\item[(MMGOU2)] Conditional independence: For $0\leq a\leq b\leq c\leq d$ the families $\lbrace (A_{s,t},B_{s,t})|a\leq s\leq t\leq b \rbrace$ and $\lbrace(A_{s,t},B_{s,t})|c\leq s\leq t\leq d \rbrace$ are conditionally independent given $J$.
		\item[(MMGOU3)] Conditional stationarity: Given $J$, the distribution of $(A_{s,s+h},B_{s,s+h})$ does not depend on $s$, i.e. $\PP_{J_0}((A_{s,s+h},B_{s,s+h})\in \cdot |\cF_s) = \PP_{J_s} ((A_{h},B_{h})\in \cdot)$ for all $h,s\geq 0$.
	\end{enumerate}
	In order to ensure c\`adl\`ag paths of the resulting process $(V_t)_{t\geq 0}$ we also make the following continuity assumption:
	\begin{enumerate}[align=parleft, labelsep=1.6cm,leftmargin=*]
		\item[(MMGOU4)] $A_t:=A_{0,t}\overset{\PP_j}\to 1$ and $B_t:=B_{0,t}\overset{\PP_j}\to 0 \text{ for }t\to 0,$ for all $j\in S$.
	\end{enumerate}
Finally, to avoid a degenerate behaviour of the resulting process we assume:
	\begin{enumerate}[align=parleft, labelsep=1.6cm,leftmargin=*]
\item[(MMGOU5)] $A_t\neq 0$ a.s. for all $t\geq 0$.
	\end{enumerate}
Clearly (MMGOU5) together with (MMGOU1) and (MMGOU3) implies $A_{s,t}\neq 0$ a.s. for all $0\leq s\leq t$.
	
We will now show that any process solving \eqref{MAPGOUFunctionalEquation} under the given conditions also solves an SDE driven by a bivariate MAP $((U,L),J)$. This is captured in Proposition \ref{PropMAPGOUSDE} below. Up to this end we give a representation of $(U,L)$ in terms of the stochastic functionals $(A,B)$ in the next lemma.

\begin{lem}\label{PropFunctionalequations xi U L}
	Suppose the functionals $(A_{s,t},B_{s,t})_{0\leq s\leq t}$ satisfy (MMGOU1)-(MMGOU5) for some 
	continuous-time Markov process $(J_t)_{t\geq 0}$ with state space $S$. 
	\begin{itemize}
		\item[a)] The process $(A_t,B_t)_{t\geq 0}:=(A_{0,t},B_{0,t})_{t\geq 0}$ has a càdlàg modification.
		\end{itemize}
	Furthermore, assuming that $(A_t,B_t)_{t\geq 0}$ is càdlàg, let $\FF$ be the natural augmented
	filtration of $(A_t , B_t )_{t\geq 0}$. Then 
		\begin{itemize}
		\item[b)] $(A_t, J_t)_{t\geq 0}$ is an $\FF$-MMP.
		\item[c)] $((U_t,L_t), J_t)_{t\geq 0}$ with 
		\begin{equation} \label{eqdefUL} \begin{pmatrix}
		U_t \\ L_t
		\end{pmatrix}:=\begin{pmatrix} \int_{(0,t]} A_{s-}^{-1}dA_s  \\ B_t-\int_{(0,t]} B_{s-}A_{s-}^{-1}dA_s \end{pmatrix}, \quad t\geq 0,\end{equation}
		is a bivariate $\FF$-MAP where the component $U$ has no jumps of size $-1$. 
		\item[d)]  The processes $(A_t)_{t\geq 0}$, $(B_t)_{t\geq 0}$, $(U_t)_{t\geq 0}$ and $(L_t)_{t\geq 0}$ are $\FF$-semimartingales.
	\end{itemize}
\end{lem}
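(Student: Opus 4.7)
The proof splits naturally into the four parts, each building on the previous.

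For part (a) I would first establish stochastic continuity of $(A_t,B_t)$. The consistency relation (MMGOU1) yields $A_{s+h}=A_s A_{s,s+h}$ and $B_{s+h}=A_{s,s+h}B_s+B_{s,s+h}$, and by (MMGOU3) the conditional law of $(A_{s,s+h},B_{s,s+h})$ given $\cF_s$ coincides with that of $(A_h,B_h)$ under $\PP_{J_s}$, which by (MMGOU4) tends to $(1,0)$ in probability as $h\downarrow 0$. A slight variant with a prior increment handles $h\uparrow 0$, so $(A,B)$ is stochastically continuous. Upgrading to a càdlàg modification is the main technical obstacle, since we cannot yet appeal to a semimartingale structure; the strategy is to exploit the conditionally independent and stationary increment structure implied by (MMGOU1)--(MMGOU3) to construct the modification first on a countable dense set, in the spirit of the regularization arguments used by de Haan and Karandikar for the Lévy case, now adapted to the Markov-modulated setting.

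For part (b), with $(A_t,B_t)$ càdlàg and $\FF$ its natural augmented filtration, the MMP property follows directly: (MMGOU1) gives $A_{s+t}/A_s=A_{s,s+t}$, which is well-defined a.s.\ by (MMGOU5), and (MMGOU3) yields for bounded measurable $f,g$
\[\EE_{J_0}[f(A_{s+t}/A_s)g(J_{s+t})\mid\cF_s]=\EE_{J_0}[f(A_{s,s+t})g(J_{s+t})\mid\cF_s]=\EE_{J_s}[f(A_t)g(J_t)],\]
which is exactly \eqref{DefMMPproperty}.

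For part (c) I apply Theorem \ref{1-1MAP-MMP} to the MMP $(A,J)$ from (b), obtaining a MAP $(U',J)$ with $A=\cE(U')$ and $\Delta U'\neq -1$; the identity $\cL og(\cE(U'))=U'$ identifies $U'$ with the $U$ from \eqref{eqdefUL}. For the joint MAP property of $((U,L),J)$ the key step is to substitute $A_u=A_s A_{s,u}$ and $B_u=A_{s,u}B_s+B_{s,u}$ (valid for $u\geq s$ by (MMGOU1)) into \eqref{eqdefUL}. After the terms involving $A_s$ and $B_s$ cancel, a direct computation shows that the increments $U_{s+t}-U_s$ and $L_{s+t}-L_s$ are the very same functionals of $(A_{s,s+v},B_{s,s+v})_{v\in[0,t]}$ as $(U_t,L_t)$ are of $(A_v,B_v)_{v\in[0,t]}$. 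Then (MMGOU2) and (MMGOU3) imply that, conditional on $\cF_s$, $((U_{s+t}-U_s,L_{s+t}-L_s),J_{s+t})$ has the law of $((U_t,L_t),J_t)$ under $\PP_{J_s}$, which is the MAP property by Lemma \ref{lem-MAPproperty}.

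For part (d), $A=\cE(U)$ is the stochastic exponential of a semimartingale (MAP additive components are semimartingales by the discussion following \eqref{MAPpathdescription}), hence $A$ is a semimartingale, and $U=\int_{(0,\cdot]}A_{s-}^{-1}dA_s$ is then a stochastic integral of a locally bounded càglàd integrand against a semimartingale, hence itself a semimartingale. The same structural result applied to the MAP $((U,L),J)$ from (c) gives $L$ as a semimartingale. Finally, rearranging the definition of $L$ yields $B=L+\int_{(0,\cdot]}B_{s-}dU_s$, the sum of a semimartingale and a stochastic integral of a locally bounded càglàd process against a semimartingale, so $B$ is a semimartingale as well.
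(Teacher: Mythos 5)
Your proposal is correct and follows essentially the same route as the paper: regularization in the spirit of de Haan--Karandikar for a), direct verification of the MMP property via (MMGOU1) and (MMGOU3) for b), Theorem \ref{1-1MAP-MMP} plus the increment identities $U_{s+t}-U_s=\int_{(s,s+t]}A_{s,u-}^{-1}dA_{s,u}$ and $L_{s+t}-L_s=B_{s,s+t}-\int_{(s,s+t]}B_{s,u-}A_{s,u-}^{-1}dA_{s,u}$ combined with the extended (MMGOU3) (Lemma \ref{lem-MAPproperty}) for c), and the MMP/MAP structure for d). The only (harmless) deviation is in d) for $B$: you rearrange $B=L+\int_{(0,\cdot]}B_{s-}dU_s$ and use closedness of semimartingales under stochastic integration and sums, whereas the paper reads $B$ as the solution of the SDE \eqref{eqSDEforB} and cites \cite[Thm. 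V.7]{PROTTER_StochIntandSDE}; both are valid.
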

\begin{proof}
	a) Using (MMGOU4) this can be shown in analogy to \cite[Lemma 2.1]{DEHAAN+KARANDIKAR_EmbeddingSDEcontinuoustimeprocess} via a variation of \cite[Thm. 14.20]{Breiman}.\\
	b) We check Definition \ref{DefMMP}, so let $f,g$ be  measurable and bounded, then by (MMGOU1) and (MMGOU3)
		\begin{align*}
		\EE_{J_0} \left[f\left(\left.\frac{A_{s+t}}{A_s}\right) g(J_{s+t})\right|\mathcal{F}_s\right]
		&=\EE_{J_0} \left[\left. f\left(\frac{A_{s}A_{s,s+t}}{A_{s}}\right) g(J_{s+t})\right|\mathcal{F}_s\right]\\
		&=\EE_{J_0} \left[f(A_{s,s+t}) g(J_{s+t})|\mathcal{F}_s\right]\\
		&=\EE_{J_s}\left[f\left(A_{t}\right)g(J_t)\right].
		\end{align*}
	c) As $U=\mathcal{L}og(A)$ it already follows from part b) together with Theorem \ref{1-1MAP-MMP} by uniqueness of the stochastic logarithm that $(U,J)$ is a MAP with respect to $\FF$ with $U$ having no jumps of size $-1$. However, to treat $(U,L)$ as a bivariate process, note that from (MMGOU1)
	$$U_{s+t}-U_s= \int_{(s,s+t]} A_{0,u-}^{-1} dA_{0,u} = \int_{(s,s+t]} A_{s,u-}^{-1} dA_{s,u}, \quad \text{a.s. for all }0\leq s\leq t.$$
	Moreover, via multiple applications of (MMGOU1)
	\begin{align*}
	L_{s+t}-L_s&=B_{0,s+t}-B_{0,s}-\int_{(s,s+t]} B_{0,u-}A_{0,u-}^{-1}dA_{0,u}\\
	&=\left(A_{s,s+t} B_{0,s} + B_{s,s+t}\right) -B_{0,s} -\int_{(s,s+t]} \left(A_{s,u-}B_{0,s}+B_{s,u-}\right) A_{0,u-}^{-1}dA_{0,u}\\
	&=B_{0,s}\left( A_{s,s+t}-1-A_{0,s}^{-1} \int_{(s,s+t]} dA_{0,u} \right)+B_{s,s+t}-\int_{(s,s+t]} B_{s,u-}A_{0,u-}^{-1}dA_{0,u}\\
	&=B_{0,s} \left( A_{s,s+t}-1 - A_{0,s}^{-1} \left( A_{0,s+t}-A_{0,s} \right)\right)+ B_{s,s+t}-\int_{(s, s+t]} B_{s,u-}A_{0,u-}^{-1}dA_{0,u}\\
	&=B_{s,s+t}-\int_{(s, s+t]} B_{s,u-}A_{s,u-}^{-1}dA_{s,u}, \quad \text{a.s. for all }0\leq s\leq t.
	\end{align*}
	Thus for $f,g$ measurable and bounded we get
	\begin{align*}
	\lefteqn{\EE_{J_0} \left[f\left(U_{s+t}-U_s, L_{s+t}-L_s\right) g(J_{s+t})|\cF_s\right]}\\
	&=\EE_{J_0} \left[ \left. f\left(\int_{(s,s+t]} A_{s,u-}^{-1} dA_{s,u}, B_{s,s+t}-\int_{(s,s+t]} B_{s,u-}A_{s,u-}^{-1}dA_{s,u}\right) g(J_{s+t})\right| \cF_s\right]\\
	&=\EE_{J_s}\left[f\left(\int_{(0,t]} A_{u-}^{-1} dA_{u} , B_t-\int_{(0,t]} B_{u-}A^{-1}_{u-}dA_u\right) g(J_t)\right]\\
	&=\EE_{J_s}\left[f(U_t,L_t)g(J_t)\right],
	\end{align*}
	where for the second equality we have used an extension of (MMGOU3) in the sense of Lemma \ref{lem-MAPproperty}. Hence $((U,L),J)$ is a MAP with respect to $\FF$.\\
	d) The semimartingale property of $(A_t)_{t\geq 0}$, $(U_t)_{t\geq 0}$ and $(L_t)_{t\geq 0}$ follows immediately since $(A_t, J_t)_{t\geq 0}$ is an $\FF$-MMP  and $((U,L),J)$ is an $\FF$-MAP. For $(B_t)_{t\geq 0}$ we observe from \eqref{eqdefUL} that it is the solution of the SDE
	\begin{equation}\label{eqSDEforB}
	dB_t= B_{t-} A_{t-}^{-1} dA_t + dL_t, \quad t\geq 0,
	\end{equation}
	which implies that also $(B_t)_{t\geq 0}$ is a semimartingale, see. e.g. \cite[Thm. V.7]{PROTTER_StochIntandSDE}.
\end{proof}

\begin{Prop}\label{PropMAPGOUSDE}
		Let $(A_{s,t},B_{s,t})_{0\leq s\leq t}$ satisfy (MMGOU1)-(MMGOU5) for some  
		continuous-time Markov process $(J_t)_{t\geq 0}$ with state space $S$. Then the process $(V_t)_{t\geq 0}$ given by \eqref{MAPGOUFunctionalEquation} is the unique solution to the SDE
	\begin{align}\label{MAPGOUSDE}
	dV_t=V_{t-}dU_t+dL_t,\quad t\geq 0,
	\end{align}
	for the MAP $((U_t,L_t),J_t)_{t\geq 0}$ defined in \eqref{eqdefUL}.
\end{Prop}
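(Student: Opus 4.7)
The plan is to verify directly that the process $V_t = A_t V_0 + B_t$ satisfies \eqref{MAPGOUSDE} and then invoke a standard uniqueness result for linear SDEs driven by semimartingales.

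First I would unpack the relationship between $A$ and $U$. By part (b) of Lemma \ref{PropFunctionalequations xi U L}, $(A,J)$ is an $\FF$-MMP, and by Theorem \ref{1-1MAP-MMP} together with the definition $U_t = \int_{(0,t]} A_{s-}^{-1}\,dA_s = \cL og(A)_t$ in \eqref{eqdefUL}, the component $U$ has no jumps of size $-1$ and $A = \cE(U)$. In particular $A$ solves $dA_t = A_{t-}\,dU_t$, with $A_0 = 1$. Similarly, from \eqref{eqdefUL}, the defining relation
\begin{equation*}
L_t = B_t - \int_{(0,t]} B_{s-} A_{s-}^{-1}\,dA_s,\quad t\geq 0,
\end{equation*}
rearranges to $dB_t = B_{t-} A_{t-}^{-1}\,dA_t + dL_t = B_{t-}\,dU_t + dL_t$, exactly as recorded in \eqref{eqSDEforB}, with $B_0 = 0$.

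Next I would use the a.s.\ identity $V_t = A_t V_0 + B_t$, which is precisely \eqref{MAPGOUFunctionalEquation} at $s=0$. Since $V_0$ is $\cF_0$-measurable and $A,B$ are semimartingales by Lemma \ref{PropFunctionalequations xi U L}(d), linearity of the stochastic integral yields
\begin{equation*}
dV_t = V_0\,dA_t + dB_t = V_0 A_{t-}\,dU_t + B_{t-}\,dU_t + dL_t = (V_0 A_{t-} + B_{t-})\,dU_t + dL_t = V_{t-}\,dU_t + dL_t,
\end{equation*}
which is \eqref{MAPGOUSDE}. Thus $(V_t)_{t\geq 0}$ is a solution.

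For uniqueness I would appeal to the general theory of linear SDEs driven by semimartingales. The equation \eqref{MAPGOUSDE} is of the form $dV_t = F(V)_{t-}\,dZ_t$ with driving semimartingale $Z=(U,L)$ and linear (hence functionally Lipschitz) coefficient operator $F(V)_t = (V_t,1)$; Protter \cite[Thm.~V.7]{PROTTER_StochIntandSDE} then guarantees existence and pathwise uniqueness of the solution given $V_0$. Alternatively, the difference of any two solutions $V,V'$ with the same initial value satisfies the homogeneous linear equation $d(V_t-V_t') = (V_{t-}-V_{t-}')\,dU_t$ with zero initial condition, whose only solution is identically zero because $\cE(U)$ does not vanish (as $\Delta U\neq -1$). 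I do not expect any real obstacle here: the bulk of the work has already been absorbed into Lemma \ref{PropFunctionalequations xi U L}, so the main point to watch is the careful bookkeeping of the identities $A=\cE(U)$ and $dB_t = B_{t-}\,dU_t + dL_t$, which together make the computation of $dV_t$ collapse to the desired form.
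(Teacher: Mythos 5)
Your proof is correct and follows essentially the same route as the paper: identify $A=\cE(U)$ so that $dA_t=A_{t-}\,dU_t$, use \eqref{eqdefUL} (equivalently \eqref{eqSDEforB}) to get $dB_t=B_{t-}\,dU_t+dL_t$, combine these in $V_t=A_tV_0+B_t$, and invoke \cite[Thm.~V.7]{PROTTER_StochIntandSDE} for uniqueness. The only difference is cosmetic (differential rather than integral notation, plus an optional stochastic-exponential uniqueness argument), so no further comment is needed.
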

\begin{proof}
	From Lemma \ref{PropFunctionalequations xi U L}, Theorem \ref{1-1MAP-MMP} and the SDE \eqref{StochExpSDE}, it follows that  
	$$A_{t}=\mathcal{E}(U)_t=1+\int_{(0,t]} \mathcal{E}(U)_{s-}dU_s = 1+\int_{(0,t]} A_{s-} dU_s,$$ and in particular $dA_s=A_{s-}dU_s$. With this we obtain from \eqref{MAPGOUFunctionalEquation} and \eqref{eqdefUL}
	\begin{align*}
	V_t=V_0 A_t + B_t &= V_0 \left( 1+ \int_{(0,t]} A_{s-}dU_s\right) + L_t + \int_{(0,t]} B_{s-}A_{s-}^{-1} dA_s \\
	&=V_0 + V_0 \int_{(0,t]} A_{s-} dU_{s}+L_t+\int_{(0,t]} B_{s-}A_{s-}^{-1}A_{s-} dU_s\\
	&=V_0 + \int_{(0,t]} (V_0 A_{s-} +B_{s-}) dU_s+L_t\\
	&=V_0+ \int_{(0,t]} V_{s-}dU_s+L_t, \quad \text{a.s. }t\geq 0.
	\end{align*}
	Uniqueness follows from standard results on SDEs, see e.g. \cite[Thm. V.7]{PROTTER_StochIntandSDE}.
\end{proof}
	
In order to present the explicit solution of the SDE \eqref{MAPGOUSDE} in terms of the driving MAP $((U,L),J)$, we introduce a further bivariate MAP $((U, \eta),J)$ in the next lemma.
	
	\begin{lem}\label{PropExplicitetaB} Let $(A_{s,t},B_{s,t})_{0\leq s\leq t}$ satisfy (MMGOU1)-(MMGOU5) for some  
		continuous-time Markov process $(J_t)_{t\geq 0}$ with state space $S$, and assume that $(A_t)_{t\geq 0}$ and $(B_t)_{t\geq 0}$ are càdlàg. Let $\FF$ be the natural augmented filtration of $(A_t , B_t )_{t\geq 0}$, and let $((U_t,L_t),J_t)_{t\geq 0}$ be the resulting bivariate $\FF$-MAP defined in \eqref{eqdefUL}. 
		\begin{itemize}
			\item[a)] Set $H_t:=\int_{(0,t]} A_{s-} dA_s^{-1}=\mathcal{L}og(A^{-1})_t$, $t\geq 0$. Then $(H_t,J_t)_{t\geq 0}$ is a MAP with respect to $\FF$.  Hence $(H_t)_{t\geq 0}$ is an $\FF$-semimartingale that moreover admits the representation
			\begin{align*}
			H_t=-U_t+ \int_{(0,t]} \sigma^2_U (J_{s-}) ds + \sum_{0<s\leq t}\left(\Delta U_s - \frac{\Delta U_s}{1+\Delta U_s}\right), \quad t\geq 0.
			\end{align*}
			\item[b)] Let $\eta_t:=L_t+[H,L]_t$, $t\geq 0$, then $((U_t,\eta_t),J_t)_{t\geq 0}$ is a bivariate $\FF$-MAP. 
			\item[c)] The following relationships hold:
			\begin{align} 
			\eta_t&= L_t + \int_{(0,t]} \sigma_{U,L}(J_s)ds - \sum_{0<s\leq t} \frac{\Delta U_s \Delta L_s}{1+\Delta U_s}, \quad t\geq 0, \label{eq-etaviaUL} \\
			L_t&=\eta_t+[U,\eta]_t,\quad t\geq 0, \label{eq-LviaUeta}\\
			\text{and}\quad  B_t&=A_t \int_{(0,t]} A_{s-}^{-1} d\eta_s =\cE(U)_t \int_{(0,t]} \cE(U)_{s-}^{-1}  d\eta_s,\quad t\geq 0. \label{eq-BviaUeta}
			\end{align}
		\end{itemize}
	\end{lem}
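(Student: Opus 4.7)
My plan is to leverage the MAP/MMP correspondence of Theorem \ref{1-1MAP-MMP} for the MAP claims, and use Itô's formula and standard properties of quadratic covariation for the explicit representations.

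For part a), assumption (MMGOU5) combined with (MMGOU1) and (MMGOU3) ensures $A_{s,t}\neq 0$ a.s., so $A^{-1}$ is well-defined. Since $(A,J)$ is an $\FF$-MMP by Lemma \ref{PropFunctionalequations xi U L}b), and $Z_{s+t}^{-1}/Z_s^{-1} = (Z_{s+t}/Z_s)^{-1}$, the MMP property \eqref{DefMMPproperty} transfers directly to $(A^{-1},J)$. Theorem \ref{1-1MAP-MMP} then identifies $H = \mathcal{L}og(A^{-1})$ as the additive component of the MAP associated with $(A^{-1},J)$, which yields the MAP and semimartingale properties. The explicit formula follows by applying Itô's formula to $f(x)=1/x$ along $A$, then integrating $A_{s-}$ against the resulting $dA_s^{-1}$, and simplifying using $A_{s-}^{-1}dA_s = dU_s$, $[A^c,A^c]_t = \int_0^t A_{s-}^2\, \sigma_U^2(J_{s-})\,ds$ and $A_s = A_{s-}(1+\Delta U_s)$.

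For part b), the representation in a) gives $H^c = -U^c$ and $\Delta H_s = -\Delta U_s/(1+\Delta U_s)$, so $[H,L]$ is a finite-variation process explicit in $(J,U,L)$, which directly yields the explicit form \eqref{eq-etaviaUL} of $\eta = L + [H,L]$. To show that $((U,\eta),J)$ is an $\FF$-MAP, note that $\eta_{s+t}-\eta_s$ is a fixed deterministic functional of $(U_{s+u}-U_s, L_{s+u}-L_s, J_{s+u})_{0\leq u\leq t}$ (being built from stochastic integrals against functions of $J$ and from sums of jumps of $(U,L)$); the MAP property of $((U,L),J)$ combined with Lemma \ref{lem-MAPproperty} then yields the MAP property of $((U,\eta),J)$.

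For \eqref{eq-LviaUeta} I expand
\begin{equation*}
\eta + [U,\eta] = L + [H,L] + [U,L] + [U,[H,L]],
\end{equation*}
noting that $[U,[H,L]]_t = \sum_{0<s\leq t} \Delta U_s \Delta H_s \Delta L_s$ as $[H,L]$ has no continuous martingale part. The $[H^c,L^c]$ and $[U^c,L^c]$ contributions cancel, while at each jump time the three jump coefficients of $\Delta U_s \Delta L_s$ sum to $0$ via
\begin{equation*}
-\frac{1}{1+\Delta U_s} + 1 - \frac{\Delta U_s}{1+\Delta U_s} = 0,
\end{equation*}
establishing \eqref{eq-LviaUeta}. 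For \eqref{eq-BviaUeta}, set $\tilde B_t := A_t \int_{(0,t]} A_{s-}^{-1}d\eta_s$ and apply integration by parts; using $dA = A_{-}dU$, the identity $\big[A, \int_0^{\cdot} A_{s-}^{-1}d\eta_s\big]_t = [U,\eta]_t$ (the predictable integrand $A_{s-}$ cancels against $A_{s-}^{-1}$ inside $d[A,\eta]$), and \eqref{eq-LviaUeta} one obtains $d\tilde B = \tilde B_{-}A_{-}^{-1}dA + dL$. Since $B$ satisfies the same linear SDE \eqref{eqSDEforB} with $\tilde B_0 = B_0 = 0$, uniqueness (e.g.\ \cite[Thm.\ V.7]{PROTTER_StochIntandSDE}) gives $B = \tilde B$.

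The main obstacle I expect is the jump-sum bookkeeping: once the explicit form of $[H,L]$ is in hand, the MAP identifications in a) and b) and the integration-by-parts in \eqref{eq-BviaUeta} are routine, but the cancellations in \eqref{eq-LviaUeta} demand careful tracking of the factor $(1+\Delta U_s)$ and essential use of the assumption $\Delta U_s \neq -1$ coming from (MMGOU5).
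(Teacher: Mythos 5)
Your proposal is correct and follows essentially the same route as the paper's proof: establish that $(A^{-1},J)$ is an $\FF$-MMP and invoke Theorem \ref{1-1MAP-MMP} for a), use the explicit finite-variation form of $[H,L]$ together with Lemma \ref{lem-MAPproperty} for the MAP property in b), verify \eqref{eq-LviaUeta} by the same continuous-part/jump cancellation, and prove \eqref{eq-BviaUeta} by checking that the candidate solves the SDE \eqref{eqSDEforB} and appealing to uniqueness. The only cosmetic difference is that you obtain the representation of $H$ by applying Itô's formula to $x\mapsto 1/x$ along $A$, whereas the paper verifies $\cE(H)_t=A_t^{-1}$ via \eqref{exponentialexplicit} and uses uniqueness of the stochastic logarithm.
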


\begin{remark} \label{rem-independencepurejumps}
	Note that in contrast to the L\'evy case considered in \cite{DEHAAN+KARANDIKAR_EmbeddingSDEcontinuoustimeprocess}, in the Markov-modulated setting independence of the switching L\'evy processes $U^j$ and $L^j$ for all $j\in S$ in general does not imply $L=\eta$ a.s. This is due to the fact that even if the pure switching parts of $U$ and $L$ are independent, the processes may have simultaneous jumps at times when $J$ jumps, and hence the jump part in \eqref{eq-etaviaUL} does not vanish. 
\end{remark}

\begin{proof}[Proof of Lemma \ref{PropExplicitetaB}]
	a) It can be checked in analogy to the proof of Lemma \ref{PropFunctionalequations xi U L} b) that $(A^{-1},J)$ is an $\FF$-MMP and by Theorem \ref{1-1MAP-MMP} this readily implies that $(H,J)$ is an $\FF$-MAP. The semimartingale property is thus immediate. To check the given representation for $(H_t)_{t\geq 0}$ use \eqref{exponentialexplicit} to see that $(H_t)_{t\geq 0}$ as given fulfils $\cE(H)_t=A_t^{-1}=\cE(U)_t^{-1}$. Uniqueness of the stochastic logarithm then yields the claim.\\
	b) We will prove that $((U,L, [H,L]),J)$ is an $\FF$-MAP to conclude that $((U,\eta),J)$ as given is an $\FF$-MAP as well. 
	By Lemma \ref{PropFunctionalequations xi U L} c) we already know that  $((U,L),J)$ is an $\FF$-MAP. 
	Further, by definition of the quadratic variation process and the representation of $(H_t)_{t\geq 0}$ developed in a) 
		\begin{align}
		[H,L]_t&=[H^c,L^c]_t+\sum_{0<s\leq t}\Delta H_s\Delta L_s \nonumber  =\int_{(0,t]} \sigma_{H,L}(J_s)ds + \sum_{0<s\leq t}\Delta H_s\Delta L_s \nonumber \\
		&= \int_{(0,t]} \sigma_{U,L}(J_s)ds - \sum_{0<s\leq t} \frac{\Delta U_s \Delta L_s}{1+\Delta U_s}, \quad t\geq 0. \label{eq-HUL2}
		\end{align}
		Thus for any $f,g$ bounded and measurable and any $s,t\geq 0$ we get
		\begin{align*}
		\lefteqn{\EE_{J_0} \left[ f\left(U_{s+t}-U_s, L_{s+t}-L_s, [H,L]_{s+t}-[H,L]_s\right) g(J_{t+s})|\mathcal{F}_s\right]}\\
		&=\EE_{J_0} \left[ f\left(U_{s+t}-U_s, L_{s+t}-L_s, [U^c,L^c]_{s+t} - [U^c,L^c]_s +\sum_{s<u\leq t+s} \frac{\Delta U_u \Delta L_u}{1+\Delta U_u}\right) g(J_{s+t})|\mathcal{F}_s\right]\\
		&=\EE_{J_s}\left[f\left(U_t, L_t, [U^c,L^c]_t +\sum_{0<u\leq t} \frac{\Delta U_u \Delta L_u}{1+\Delta U_u} \right)g(J_t)\right],
		\end{align*}
		by the MAP-property of $((U,L),J)$ in the sense of Lemma \ref{lem-MAPproperty}.\\
c) Equation \eqref{eq-etaviaUL} follows readily from \eqref{eq-HUL2} and the definition of $\eta$ in b). For \eqref{eq-LviaUeta} we observe that 
	\begin{align*}
	\eta_t+[U,\eta]_t=L_t+[H,L]_t+[U,L+[H,L]]_t=L_t+[H+U,L]_t+[U,[H,L]]_t, \quad t\geq 0,
	\end{align*}
	such that it remains to verify 
	\begin{equation} \label{eq-UHL1}
	[H+U,L]_t+[U,[H,L]]_t=0, \quad t\geq 0.
	\end{equation}
	Note that from a) it follows that $H_t+U_t=\int_{(0,t]} \sigma^2_U (J_{s}) ds + \sum_{0<s\leq t}\left(\Delta U_s - \frac{\Delta U_s}{1+\Delta U_s}\right)$ such that $H+U$ is of finite total variation. Moreover, $[H,L]$ is of finite variation by definition, see e.g. \cite[Cor. 1 of Thm. II.22]{PROTTER_StochIntandSDE}. Thus $[H+U,L]^c_t=0=[U,[H,L]]^c_t$ for all $t\geq 0$ and it suffices to consider the jumps. For these we obtain using the representation of $(H_t)_{t\geq 0}$ obtained in a) and \eqref{eq-HUL2}
	\begin{align*}
	\lefteqn{\sum_{0<s\leq t} (\Delta H_s + \Delta U_s) \Delta L_s + \sum_{0<s\leq t}\Delta U_s\Delta[H,L]_s}\\
	&= \sum_{0<s\leq t} \left( \Delta U_s - \frac{\Delta U_s}{1+\Delta U_s} \right) \Delta L_s - \sum_{0<s\leq t} \Delta U_s \frac{\Delta U_s \Delta L_s}{1+\Delta U_s}\\
	&=0, \quad t\geq 0,
	\end{align*}
	which implies \eqref{eq-UHL1} and hence \eqref{eq-LviaUeta}.\\ Finally, we check that $(B_t)_{t\geq0}$ as given in \eqref{eq-BviaUeta} solves the SDE \eqref{eqSDEforB}. Indeed, from \eqref{eqdefUL}, \eqref{eq-LviaUeta} and via integration by parts 
	\begin{align*}
	B_t&=L_t+\int_{(0,t]} B_{s-}A_{s-}^{-1}dA_s  =\eta_t+[U,\eta]_t+\int_{(0,t]} B_{s-}A_{s-}^{-1}dA_s\\
	&= \eta_t+[U,\eta]_t+ B_t A_t^{-1} A_t - \int_{(0,t]}A_{s-} d(B_s A_s^{-1}) - [B_\cdot A_\cdot^{-1}, A_\cdot]_t, \quad \text{a.s. for all } t\geq 0,
	\end{align*}
	which is equivalent to 
	\begin{equation} \label{eq-computeB} \eta_t- \int_{(0,t]}A_{s-} d(B_s A_s^{-1})+[U,\eta]_t- [B_\cdot A_\cdot^{-1}, A_\cdot]_t =0, \quad \text{a.s. for all }t\geq 0.
	\end{equation}
	Inserting the claimed formula $B_t=A_t \int_{(0,t]} A_{s-}^{-1}d\eta_s$, $t\geq 0$, gives
	\begin{align*}
	\int_{(0,t]}A_{s-} d(B_s A_s^{-1})+ [B_\cdot A_\cdot^{-1}, A_\cdot]_t  &= 	\int_{(0,t]}A_{s-} d\left( \int_{(0,s]} A_{u-}^{-1}d\eta_u \right) + \left[ \int_{(0,\cdot]} A_{s-}^{-1}d\eta_s , A_\cdot\right]_t \\ 
	& = \int_{(0,t]} d\eta_s  + \int_{(0,t]} A_{s-}^{-1} d[\eta, A]_s =\eta_t+  \left[\eta, \int_{(0,\cdot]} A_{s-}^{-1} dA_s \right]_t \\ &= \eta_t+ [\eta,U]_t, \quad \text{a.s. for all } t\geq 0,
	\end{align*}
	such that \eqref{eq-computeB} is fulfilled. Uniqueness of the solution to the SDE \eqref{eqSDEforB}, see e.g. \cite[Thm. V.7]{PROTTER_StochIntandSDE}, now yields the claim.
\end{proof}

	We may now summarize our findings in the main theorem of this section.
	
	\begin{theorem}\label{MainTheoremMAPGOU}
		Let $(A_{s,t},B_{s,t})_{0\leq s\leq t}$ satisfy (MMGOU1)-(MMGOU5) for a  
		continuous-time Markov process $(J_t)_{t\geq 0}$ with state space $S$, and assume that $(A_t)_{t\geq 0}$ and $(B_t)_{t\geq 0}$ are càdlàg. Let $\FF$ be the natural augmented filtration of $(A_t , B_t )_{t\geq 0}$. Then a bivariate $\FF$-MAP $((U,\eta),J)$ with $\Delta U\neq -1$ can be defined by
		\begin{equation}
		\begin{pmatrix}
		U_t \\ \eta_t
		\end{pmatrix} = \begin{pmatrix}
		\int_{(0,t]} A_{s-}^{-1} dA_s \\ \int_{(0,t]} A_{s-}d(B_sA_s^{-1})
		\end{pmatrix}, \quad t\geq 0.
		\end{equation}
		The process $(V_t)_{t\geq 0}$ defined in \eqref{MAPGOUFunctionalEquation} is then given by
		\begin{equation}\label{MAPGOUexplicit}
		V_t=\cE(U)_t\left(V_0 + \int_{(0,t]} \cE(U)_{s-}^{-1} d\eta_s\right), \quad t\geq 0,
		\end{equation}
		for some random variable $V_0$ that is conditionally independent of $((U_t,\eta_t),J_t)_{t\geq 0}$ given $J_0$. Moreover, the process $(V_t)_{t\geq 0}$ is the unique solution of the SDE \eqref{MAPGOUSDE} driven by the bivariate $\FF$-MAP $((U,L),J)$ with $(L_t)_{t\geq 0}$ given in
		\eqref{eq-LviaUeta}.\\
		Conversely, given a bivariate $\FF$-MAP $((U,\eta),J)$ such that $\Delta U\neq -1$,
	the functionals $(A_{s,t},B_{s,t})_{0\leq s\leq t}$ defined by
		\begin{equation}\label{eqdefAB}
	\begin{pmatrix}
	A_{s,t}\\ B_{s,t}
	\end{pmatrix} = \begin{pmatrix}
	\cE(U)_t \cE(U)_s^{-1} \\ \cE(U)_t\int_{(s,t]} \cE(U)_{u-}^{-1} d\eta_u
	\end{pmatrix}, \quad 0\leq s\leq t,
	\end{equation}
	fulfil the assumptions (MMGOU1) - (MMGOU5) with respect to the modulating Markov process $(J_t)_{t\geq 0}$.
	\end{theorem}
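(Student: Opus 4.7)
The theorem consists of a forward and a converse direction. For the forward direction, essentially all ingredients are already available, and the plan is to assemble them. Lemma \ref{PropFunctionalequations xi U L}(c) provides the bivariate $\FF$-MAP $((U,L),J)$ with $U$ having no jumps of size $-1$, and Lemma \ref{PropExplicitetaB}(b) upgrades this to the bivariate $\FF$-MAP $((U,\eta),J)$ with $\eta = L + [H,L]$. To identify the claimed formula $\eta_t = \int_{(0,t]} A_{s-}\, d(B_s A_s^{-1})$, I would start from \eqref{eq-BviaUeta} in Lemma \ref{PropExplicitetaB}(c), which says $B_t A_t^{-1} = \int_{(0,t]} A_{s-}^{-1} d\eta_s$, so that $d(B_s A_s^{-1}) = A_{s-}^{-1} d\eta_s$ and therefore $\int_{(0,t]} A_{s-}\, d(B_s A_s^{-1}) = \eta_t$. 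The explicit representation \eqref{MAPGOUexplicit} then follows by evaluating \eqref{MAPGOUFunctionalEquation} at $s=0$ and substituting $A_t = \cE(U)_t$ together with \eqref{eq-BviaUeta}, while the SDE assertion is exactly Proposition \ref{PropMAPGOUSDE}.

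For the converse, I have to verify (MMGOU1)-(MMGOU5) for the functionals in \eqref{eqdefAB}. The easy parts are handled directly: (MMGOU1) is an algebraic identity using the multiplicative factorization $A_{s,t} = \cE(U)_t \cE(U)_s^{-1}$ and additivity of the integral in $B_{s,t}$; (MMGOU5) follows from the hypothesis $\Delta U \neq -1$ via the remark after \eqref{exponentialexplicit}; and (MMGOU4) holds since $\cE(U)$ and $\eta$ are càdlàg with $\cE(U)_0 = 1$ and $\eta_0 = 0$, so that $A_t \to 1$ and $B_t \to 0$ in $\PP_j$-probability as $t\to 0$.

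The substantial content is (MMGOU2) and (MMGOU3), which both rely on a shift argument for the stochastic exponential. From \eqref{exponentialexplicit}, a direct computation yields the factorization
\begin{equation*}
\cE(U)_{s+h} = \cE(U)_s \cdot \cE(U^{(s)})_h, \qquad U^{(s)}_h := U_{s+h} - U_s,
\end{equation*}
which after a change of variable in \eqref{eqdefAB} gives
\begin{equation*}
A_{s,s+h} = \cE(U^{(s)})_h, \qquad B_{s,s+h} = \cE(U^{(s)})_h \int_{(0,h]} \cE(U^{(s)})_{v-}^{-1}\, d\eta^{(s)}_v,
\end{equation*}
with $\eta^{(s)}_v := \eta_{s+v} - \eta_s$. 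Thus $(A_{s,s+h}, B_{s,s+h})$ is measurable with respect to the shifted increment $\sigma$-field $\sigma(U_{s+u}-U_s,\, \eta_{s+u}-\eta_s:\, 0\leq u\leq h)$, and the extended MAP property for $((U,\eta),J)$ from Lemma \ref{lem-MAPproperty} immediately yields (MMGOU3). Property (MMGOU2) follows from the fact that MAPs have conditionally independent increments given the whole trajectory of $J$, a structural consequence of the path decomposition \eqref{MAPpathdescription}, since the functionals over disjoint time windows $[a,b]$ and $[c,d]$ live in disjoint increment $\sigma$-fields of $(U,\eta)$.

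The main obstacle is the careful verification of the shifted factorization of $\cE(U)$ and the corresponding change of variable inside the stochastic integral defining $B_{s,s+h}$, as the jump product, the continuous quadratic-variation bracket, and the integrand $\cE(U)_{u-}^{-1}$ in \eqref{exponentialexplicit} must each split cleanly at $s$. A secondary subtlety is that (MMGOU2) asks for conditional independence given $J$ rather than the conditional-on-$\cF_s$ property built into Definition \ref{DefMAP}; this strengthening must be extracted from \eqref{MAPpathdescription}, where on each $J$-constancy interval the additive component behaves as an independent Lévy process and the switching jumps $\Phi_{X,n}^{ij}$ are independent of everything else.
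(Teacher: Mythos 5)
Your proposal is correct and follows essentially the same route as the paper: the forward direction is assembled from Lemma \ref{PropFunctionalequations xi U L}, Lemma \ref{PropExplicitetaB} (with the form of $\eta$ read off from \eqref{eq-BviaUeta}) and Proposition \ref{PropMAPGOUSDE}, and the converse checks (MMGOU1)--(MMGOU5), with (MMGOU3) obtained exactly as in the paper by expressing $(A_{s,s+h},B_{s,s+h})$ through increments of $(U,\eta)$ via \eqref{exponentialexplicit} and invoking Lemma \ref{lem-MAPproperty}. The only cosmetic difference is that for (MMGOU2) you argue conditional independence given $J$ from the path decomposition \eqref{MAPpathdescription}, whereas the paper simply cites \cite[Thm. 2.22]{CINLAR_MAP21972} for the same property.
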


\begin{proof}[Proof of Theorem \ref{MainTheoremMAPGOU}]
Given $(A_{s,t},B_{s,t})_{0\leq s\leq t}$ it follows from Lemma \ref{PropExplicitetaB} b) that $((U,\eta),J)$ as given is an $\FF$-MAP. Hereby the specific form of $(U_t)_{t\geq 0}$ and the fact that $U$ has no jumps of size $-1$ have been shown in Lemma \ref{PropFunctionalequations xi U L}, while the specific form of $\eta$ follows from \eqref{eq-BviaUeta}. Moreover, as $U_t=\cL og(A)_t$ is equivalent to $A_t=\cE(U)_t$, we may insert this and \eqref{eq-BviaUeta} in \eqref{MAPGOUFunctionalEquation} to obtain \eqref{MAPGOUexplicit}. Finally, the fact that the given process $(V_t)_{t\geq 0}$ solves the SDE \eqref{MAPGOUSDE} with $L$ as given in \eqref{eq-LviaUeta} has been shown in Proposition~\ref{PropMAPGOUSDE}. \\
For the converse, we check that the functionals $(A_{s,t}, B_{s,t})_{0\leq s\leq t}$ as given in \eqref{eqdefAB} fulfil the assumptions (MMGOU1) to (MMGOU5). Indeed, consistency, i.e. (MMGOU1), follows by an immediate computation from \eqref{eqdefAB}, while conditional independence of the increments of $(A_{s,t}, B_{s,t})_{0\leq s\leq t}$, i.e.  (MMGOU2), follows from the same property of the MAP $((U,\eta),J)$ as shown e.g. in \cite[Thm. 2.22]{CINLAR_MAP21972}. To prove conditional stationarity we compute using \eqref{exponentialexplicit} and Lemma \ref{lem-MAPproperty}
\begin{align*}
\lefteqn{\PP_{J_0}((A_{s,s+h}, B_{s,s+h})\in \cdot |\cF_s) = \PP_{J_0}\Bigg(\Big(\cE(U)_{s+h} \cE(U)_s^{-1}, \cE(U)_{s+h} \int_{(s,s+h]} \cE(U)_u^{-1} d\eta_u\Big) \in \cdot \Bigg|\cF_s\Bigg)}\\
&= \PP_{J_0}\Bigg( \Big(\exp\big(U_{s+h}^c - U_s^c - \frac12 \left([U^c,U^c]_{s+h} - [U^c,U^c]_s\right)\big) \prod_{s<t\leq s+h} (1+\Delta U_t) ,  \\
&\qquad \qquad  \int_{(s,s+h]} \exp\big(U_{s+h}^c - U_u^c - \frac12 \left([U^c,U^c]_{s+h} - [U^c,U^c]_u\right)\big) \prod_{u<t\leq s+h} (1+\Delta U_t)d\eta_u\Big)
 \in \cdot \Bigg|\cF_s\Bigg)\\
 &= \PP_{J_s}\Bigg( \Big(\exp\big(U_{h}^c - \frac12 [U^c,U^c]_{h} \big) \prod_{0<t\leq h} (1+\Delta U_t) ,  \\
 &\qquad \qquad  \int_{(0,h]} \exp\big(U_{h}^c - U_u^c - \frac12 \left([U^c,U^c]_{h} - [U^c,U^c]_u\right)\big) \prod_{u<t\leq h} (1+\Delta U_t)d\eta_u\Big)
 \in \cdot \Bigg)\\
 &= \PP_{J_s} ((A_h, B_h)\in \cdot),
\end{align*}
which is (MMGOU3). Finally, (MMGOU4) follows directly from the c\`adl\`ag-paths of $(U,\eta)$, while (MMGOU5) is immediate from \eqref{exponentialexplicit} since $\Delta U \neq -1$. 
\end{proof}

		In \cite{DEHAAN+KARANDIKAR_EmbeddingSDEcontinuoustimeprocess}, instead of (MMGOU5), the authors imposed a positivity condition of the form 
			\begin{enumerate}[align=parleft, labelsep=1.7cm,leftmargin=*]
			\item[(MMGOU5')] $A_t>0$ for all $t\geq 0$ a.s.  
		\end{enumerate}
	that led to the well know form of the Lévy-driven GOU process \eqref{GOULevyexplizit}. A similar restriction can be used in the Markov-modulated setting and leads to the following variation of Theorem \ref{MainTheoremMAPGOU}.

	\begin{Prop}
		Let $(A_{s,t},B_{s,t})_{0\leq s\leq t}$ satisfy (MMGOU1)-(MMGOU4) and (MMGOU5') for a 
		 continuous-time Markov process $(J_t)_{t\geq 0}$ with  state space $S$, and assume that $(A_t)_{t\geq 0}$ and $(B_t)_{t\geq 0}$ are càdlàg. Let $\FF$ be the natural augmented filtration of $(A_t , B_t )_{t\geq 0}$. Then a bivariate $\FF$-MAP $((\xi,\eta),J)$ can be defined by
		\begin{equation}
		\begin{pmatrix}
		\xi_t \\ \eta_t
		\end{pmatrix} = \begin{pmatrix}
		- \log A_t \\ \int_{(0,t]} A_{s-}d(B_sA_s^{-1})
		\end{pmatrix}, \quad t\geq 0,
		\end{equation}
		such that the process $(V_t)_{t\geq 0}$ defined in \eqref{MAPGOUFunctionalEquation} is given by
		\begin{equation}\label{MAPGOUexplicitxi}
		V_t= e^{-\xi_t} \left(V_0 + \int_{(0,t]} e^{\xi_{s-}} d\eta_s\right), \quad t\geq 0,
		\end{equation}
		for some random variable $V_0$ that is conditionally independent of $((\xi_t,\eta_t),J_t)_{t\geq 0}$ given $J_0$. Moreover, the process $(V_t)_{t\geq 0}$ is the unique solution of the SDE \eqref{MAPGOUSDE} for the bivariate $\FF$-MAP $((U,L),J)$ defined via
		\begin{equation}\label{eq-ULviaxieta}
		\begin{pmatrix}
		U_t \\ L_t
		\end{pmatrix} = \begin{pmatrix}
		-\xi_t + \frac12 \int_{(0,t]} \sigma_\xi^2(J_s) ds + \sum_{0<s\leq t} \left( \Delta \xi_t + e^{-\Delta \xi_t} - 1 \right) \\ \eta_t -\int_{(0,t]} \sigma_{\xi,\eta}(J_s)ds +\sum_{0<s\leq t} \left(e^{-\Delta\xi_s}-1\right) \Delta\eta_s
		\end{pmatrix}, \quad t\geq 0,
		\end{equation}
		such that $\Delta U>-1$.\\
		Conversely, given a bivariate $\FF$-MAP $((\xi,\eta),J)$,
		the functionals $(A_{s,t},B_{s,t})_{0\leq s\leq t}$ defined by
		\begin{equation}\label{eqdefABxi}
		\begin{pmatrix}
		A_{s,t}\\ B_{s,t}
		\end{pmatrix} = \begin{pmatrix}
		e^{-(\xi_t-\xi_s)} \\ e^{-\xi_t} \int_{(s,t]} e^{\xi_{u-}} d\eta_u
		\end{pmatrix}, \quad 0\leq s\leq t,
		\end{equation}
		fulfil the assumptions (MMGOU1) - (MMGOU4) and (MMGOU5')  with respect to the modulating Markov process $(J_t)_{t\geq 0}$.
	\end{Prop}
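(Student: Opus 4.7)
My plan is to deduce this Proposition from Theorem \ref{MainTheoremMAPGOU} by exploiting the fact that under the strengthened positivity assumption (MMGOU5'), the stochastic exponential $A_t=\cE(U)_t$ from that theorem is strictly positive, so one may pass from $U$ to $\xi=-\log A$ via the classical logarithm. Since (MMGOU5') plainly implies (MMGOU5), Theorem \ref{MainTheoremMAPGOU} already supplies the bivariate $\FF$-MAP $((U,\eta),J)$, the identity $A_t=\cE(U)_t$, and the representation \eqref{MAPGOUexplicit}. Positivity of $\cE(U)_t$ for every $t\geq 0$ is equivalent via \eqref{exponentialexplicit} to $\Delta U>-1$, so $\xi_t:=-\log A_t$ is a well-defined c\`adl\`ag process.

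Taking the logarithm of \eqref{exponentialexplicit} yields
\begin{align*}
-\xi_t = U_t - \tfrac12 [U^c,U^c]_t + \sum_{0<s\leq t}\bigl(\log(1+\Delta U_s) - \Delta U_s\bigr), \quad t\geq 0,
\end{align*}
so $\Delta U_s=e^{-\Delta\xi_s}-1$, $[U^c,U^c]_t=[\xi^c,\xi^c]_t=\int_{(0,t]}\sigma_\xi^2(J_s)\,ds$, and rearranging produces the first line of \eqref{eq-ULviaxieta}. The second line then follows from \eqref{eq-LviaUeta} of Lemma \ref{PropExplicitetaB} by splitting $[U,\eta]$ into its continuous covariation---which equals $-\int\sigma_{\xi,\eta}(J_s)\,ds$ because $U^c=-\xi^c$---and its jump contribution $\sum(e^{-\Delta\xi_s}-1)\Delta\eta_s$.

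To show that $((\xi,\eta),J)$ is itself an $\FF$-MAP I would invoke that $A=\cE(U)>0$ is the multiplicative component of an $\FF$-MMP by Lemma \ref{PropFunctionalequations xi U L}b) and argue as in the second half of the proof of Theorem \ref{1-1MAP-MMP}: the auxiliary process $\bar Y=-\log|A|$ (which here coincides with $\xi$ since $\bar K\equiv 0$) yields the additive component, and the same bounded-measurable-function manipulation performed jointly with $\eta$ confirms the bivariate MAP property. The explicit expression \eqref{MAPGOUexplicitxi} is then obtained by inserting $A_t=e^{-\xi_t}$ and $B_t=A_t\int_{(0,t]}A_{s-}^{-1}d\eta_s$ (from \eqref{eq-BviaUeta}) into \eqref{MAPGOUFunctionalEquation}, while the SDE statement together with its uniqueness is inherited directly from Theorem \ref{MainTheoremMAPGOU} via the $U$--$\xi$ conversion just derived.

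For the converse, given a bivariate $\FF$-MAP $((\xi,\eta),J)$, I would define $U$ by the first line of \eqref{eq-ULviaxieta} and verify through \eqref{exponentialexplicit} that $\cE(U)_t=e^{-\xi_t}$, whence $\Delta U_t=e^{-\Delta\xi_t}-1>-1$. The converse direction of Theorem \ref{MainTheoremMAPGOU} applied to $((U,\eta),J)$ then produces $(A_{s,t},B_{s,t})$ satisfying (MMGOU1)--(MMGOU5) in the form \eqref{eqdefAB}, and substituting $\cE(U)_t=e^{-\xi_t}$ translates these into \eqref{eqdefABxi}; strict positivity of $e^{-\xi_t}$ automatically upgrades (MMGOU5) to (MMGOU5'). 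The main obstacle throughout is the careful bookkeeping of the jump and continuous covariation terms in the bijection between $U$ and $\xi$; once those identities are in place, the proposition reduces cleanly to Theorem \ref{MainTheoremMAPGOU}.
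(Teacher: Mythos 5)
Your proposal is correct and follows essentially the same route as the paper: both reduce the statement to Theorem \ref{MainTheoremMAPGOU} by using (MMGOU5') to replace the stochastic logarithm with the natural logarithm ($\xi_t=-\log\cE(U)_t$), derive the MAP property of the $\xi$-based processes by the same computations as in Theorem \ref{1-1MAP-MMP} and Lemma \ref{PropFunctionalequations xi U L}, obtain the formulas in \eqref{eq-ULviaxieta} from \eqref{exponentialexplicit} and \eqref{eq-LviaUeta}, and deduce the converse directly from the converse part of Theorem \ref{MainTheoremMAPGOU} via $e^{-\xi_t}=\cE(U)_t$. Your jump/covariation bookkeeping ($\Delta U_s=e^{-\Delta\xi_s}-1$, $U^c=-\xi^c$) matches the computations the paper leaves implicit.
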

\begin{proof}
	The proof is essentially the same as for Theorem \ref{MainTheoremMAPGOU} and we shall only shortly highlight the differences: 
	As $A_t>0$ a.s. for all $t\geq 0$ we may compute its natural logarithm instead of only the stochastic logarithm and consider $\xi_t=\log \cE(U)_t$ instead of the process $U_t$. The fact that $((\xi,L),J)$ is a bivariate MAP then follows by an analog computation as for $((U,L),J)$ in the proof of Lemma \ref{PropFunctionalequations xi U L} with $\xi$ being treated as in the computations leading to Theorem \ref{1-1MAP-MMP}. Further, as $A_t=e^{-\xi_t}$ and thus $B_t=e^{-\xi_t}\int_{(0,t]} e^{\xi_s-} d\eta_s$ by Lemma \ref{PropExplicitetaB} c), the given form of $(V_t)_{t\geq 0}$ in \eqref{MAPGOUexplicitxi} is immediate. Moreover it follows from Proposition  \ref{PropMAPGOUSDE} that $(V_t)_{t\geq 0}$ solves the SDE \eqref{MAPGOUSDE} for the MAP $((U,L),J)$, the precise formulas for $U$ and $L$ hereby can be shown by direct computations from $\xi_t=\log \cE(U)_t$ together with \eqref{exponentialexplicit} and \eqref{eq-LviaUeta}.\\
	The converse direction is a direct implication of the converse direction in Theorem \ref{MainTheoremMAPGOU} since $e^{-\xi_t}=\cE(U)_t$, $t\geq 0$.
\end{proof}

In analogy to the Lévy setting we now give the following definition.

\begin{defin}\rm \label{def-MMGOU} Let $((\xi_t,\eta_t), J_t)_{t\geq 0}$ be a bivariate $\FF$-MAP. Then the process $(V_t)_{t\geq 0}$, defined in \eqref{MAPGOUexplicitxi} for some random variable $V_0$ that is conditionally independent of $((\xi_t,\eta_t),J_t)_{t\geq 0}$ given $J_0$, will be called \textit{Markov-modulated generalized Ornstein-Uhlenbeck (MMGOU) process  driven by $((\xi,\eta),J)$}.
\end{defin}

\begin{ex}\label{ex-MMOU} The Markov-modulated Ornstein-Uhlenbeck  (MMOU) process considered e.g. in \cite{HUANG+SPREJ_MarkovmodulatedOUP2014} and \cite{LINDSKOG+MAJUMDER_ExactLongtimebehaviourMAP2019} is a special case of \eqref{MAPGOUexplicitxi} for $((\xi,\eta),J)_{t\geq 0}$ being a pure switching MAP with additive component given by
\begin{equation}\label{eq-xietaMMOU} \begin{pmatrix}\xi_t \\ \eta_t\end{pmatrix}=\begin{pmatrix} \int_{(0,t]}\gamma_\xi(J_s)ds \\ \int_{(0,t]}\gamma_\eta(J_s)ds + \int_{(0,t]} \sigma_\eta^2(J_s) dB_s \end{pmatrix}, \quad t\geq 0,\end{equation} 
for some standard Brownian motion $(B_t)_{t\geq 0}$, and Markovian component $(J_t)_{t\geq 0}$ defined on a finite state space $S$. In \cite{HUANG+SPREJ_MarkovmodulatedOUP2014} the authors develop the MMOU process as solution to an SDE of the form \eqref{MAPGOUSDE}, and - among other results - determine explicit expressions for its mean and variance, while in \cite{LINDSKOG+MAJUMDER_ExactLongtimebehaviourMAP2019} the long-time behaviour of the MMOU process is studied and an expression for its stationary distribution is derived, see also Example \ref{ex-statMMOU} below. \end{ex}

\begin{remark}
	One could also completely drop assumption (MMGOU5) (or (MMGOU5')) in the above derivations. The resulting continuous-time process would still be a solution of the SDE \eqref{MAPGOUSDE} for a bivariate MAP $((U,L),J)$, but $U$ could have jumps of size $-1$. In the Lévy setting such solutions have been considered e.g. in \cite{BEHME+LINDNER+MALLER_StationarySolutionsSDEGOU} and \cite{BLRR}. However, as the main focus of this section is a thorough introduction of the MMGOU process \eqref{MAPGOUexplicitxi}, we decided to exclude this case to avoid unnecessary further technicalities.
\end{remark}

\begin{remark} \label{rem-naive}
	A naive approach to define a Markov-modulated process of GOU type would have been to consider a concatenation of GOU processes, i.e. a stochastic process  $(\widetilde{V})_{t\geq 0}$ such that $\widetilde{V}_t$ behaves in law as a GOU process $V^j$ driven by a bivariate Lévy process $(\xi^j,\eta^j)$, whenever $J_t=j\in S$ and such that jumps of $J$ may induce additional jumps of $\tilde{V}$.\\
	If neither the MAP $((\xi_t,\eta_t),J_t)_{t\geq 0}$ nor the process $(\widetilde{V}_t)_{t\geq 0}$ exhibit jumps induced by the jumps of $(J_t)_{t\geq 0}$, the MMGOU process $(V_t)_{t\geq 0}$ driven by $((\xi,\eta),J)$ and $(\widetilde{V}_t)_{t\geq 0}$ are equal in law whenever $V_0\overset{d}=\widetilde{V}_0$.  However, as soon as additional jumps induced by the jumps of $J$ may occur, equality in distribution of $(V_t)_{t\geq 0}$ and $(\widetilde{V}_t)_{t\geq 0}$ only holds as long as $V_0\overset{d}=\widetilde{V}_0$ and one further chooses the additional jumps of $(\widetilde{V}_t)_{t\geq 0}$ such that
	$$\Delta \widetilde{V}_{T_n}\overset{d}= e^{-\Delta\xi_{T_{n}}}\Delta\eta_{T_{n}} + \left(e^{-\Delta \xi_{T_{n}}}-1\right) e^{-\xi_{T_{n}-}} \left( V_{0} + \int_{(0,T_{n})}e^{\xi_{s-}}d\eta_s \right).$$
	Without this assumption, the process $(\widetilde{V})_{t\geq 0}$ can not be expressed as solution to an SDE. Hence its further analysis is much more difficult than the subsequent analysis of the MMGOU process as defined above.
\end{remark}

	\section{Stationarity of the Markov-modulated GOU process}\label{S3}
	\setcounter{equation}{0}
	
	In this section we derive necessary and sufficient conditions for the existence of a strictly stationary MMGOU process. However, in order to present and prove our results in Section \ref{S3b} below, we first need to recall and develop further preliminary facts on Markov additive processes and on exponential functionals of Markov additive processes in the upcoming two subsections.
	
	\subsection{Further preliminaries on Markov additive processes}\label{S3a}
	
Consider a Markov additive process $(X,J)$ as introduced in Section \ref{S1a}. As before we assume the state space $S$ of $J$ to be at most countable and additionally from now on we will always assume $J$ to be ergodic, i.e. irreducible and positive recurrent. This ensures the existence of a stationary distribution of $J$ that we denote by $\pi=(\pi_j)_{j\in S}$. The terminology ``almost sure'' will in this section always be understood as $\PP_\pi$-almost sure with  $\PP_\pi(\cdot)=\sum_{j\in S}\pi_j \PP_j(\cdot)$. In particular $\PP_\pi$-almost sure is equivalent to $\PP_j$-almost sure for all $j\in S$.

 We write $\fatQ=(q_{ij})_{i,j\in S}$ for the intensity matrix of $J$ whose entries fulfil $q_{jj}<0$ and $\sum_{k\in S}q_{jk}=0$ for all $j\in S$. Further, under $\PP_j$, $j\in S$, we denote the \emph{return times} to state $j$ by 
$$\tau_0^{re}(j):=0,\quad \text{and} \quad \taure_n(j):=\inf\lbrace t>\tau_{n-1}^{re}(j): J_{t-}\neq J_t=j\rbrace, \, n\in\NN,$$
the \emph{exit times}, leaving state $j$, by 
$$\tauex_n(j):=\inf\lbrace t>\tau_{n-1}^{re}(j):J_t\neq j\rbrace, \, n\in\NN,$$ 
and the $n$-th \emph{sojourn time} as $$\mathbb{T}_j^n:=[\tau_{n-1}^{re}(j),\tauex_n(j)), \, n\in\NN.$$ 
The \emph{global sojourn time} in state $j$ as well as the sojourn time in $j$ up to time $t$ will be denoted by $$\mathbb{T}_j:=\bigcup_{n\geq 1}\mathbb{T}_j^n, \quad \text{and} \quad \mathbb{T}_j(t):= \TT_j \cap [0,t], \, t\geq 0.$$
 Since $J$ is Markovian, the sojourn time lengths  $\{|\mathbb{T}_j^n|, n\geq 1\}$ form an i.i.d. sequence of exponentially distributed random variables, more precisely   $$|\mathbb{T}_j^n|= \tauex_n(j) - \taure_{n-1}(j) \sim\Exp(-q_{jj}).$$ 
 The sequences of excursion time lengths and cycle lengths $\{\taure_n(j)-\tauex_n(j), n\geq 1\}$, $ \{ \taure_n(j)-\taure_{n-1}(j), n\geq 1\}$, and $\{ \tauex_n(j)-\tauex_{n-1}(j), n\geq 1\}$ are also i.i.d, but not exponentially distributed.  
 
 Fixing a state $j\in S$ we may reduce the time axis to $\mathbb{T}_j$ by ``conflating'' the excursions of the additive component $X$ for $t\not\in\TT_j$ to single jumps and identifying the $n$-th exit and $n$-th return time of $j$. The resulting \emph{conflated process} $(\hat{X}_t)_{t\geq 0}:=(\hat{X}^j_t)_{t\geq 0}$ as introduced in \cite{BEHME+SIDERIS_ExpFuncMAP2020} is then given by
 \begin{equation}\label{embeddedLevystructureMAP}\hat{X}^j_t:= X_{t+\sum_{k=1}^n (\taure_k(j)-\tauex_k(j))} \quad \text{for} \quad t\in \Big[\tauex_n(j) - \sum_{k=1}^{n-1} (\taure_k(j)-\tauex_k(j)), \tauex_{n+1}(j) - \sum_{k=1}^n (\taure_k(j)-\tauex_k(j))\Big), \end{equation}
 and as shown in \cite[Lemma 4.1]{BEHME+SIDERIS_ExpFuncMAP2020}, if $X$ is real-valued, then $\hat{X}$ is a L\'evy process with characteristic triplet $(\gamma_{X^{j}}, \sigma^2_{X^{j}}, \nu_{\hat{X}^j})$,
 where $\nu_{\hat{X}^j}(dx) = \nu_{X^{j}}(dx) - q_{jj}\PP_j(X_{\taure_1(j)}-X_{\tauex_1(j)}\in dx)$.
 
 We will also use the \emph{dual} MAP $(X^*,J^*)$ of $(X,J)$, sometimes also called its \emph{time-reversal}. 
Under the stationary distribution $\pi$ the MAP $(X^\ast,J^\ast)$ has intensity matrix $\fatQ^*=(q_{ij}^*)_{i,j\in S}=(\frac{\pi_j}{\pi_i} q_{ji})_{i,j\in S}$, and the additional jumps of $X^*$ induced by the jumps of $J$ are given by $\Phi_{X^*}^{ij}=-\Phi_{X}^{ji}$, while the switching Lévy character is kept with the Lévy process $X^{j,\ast}$ being equal in law to $-X^{j}$, cf. \cite[Appendix A.2]{DEREICH+DOERING+KYPRIANOU_RealselfsimilarprocessesStartedfromOrigin2017}. 
In particular it holds that
 	\begin{align}\label{DualityRelationMAPs}
 (X_{(t-s)-}-X_{t},J_{(t-s)-})_{0\leq s\leq t} \text{ under }\PP_\pi \text{ is equal in law to } (X^*_s,J^*_s)_{0\leq s\leq t}  \text{ under }\PP^\ast_\pi,
 \end{align}
 by  \cite[Lemma 21]{DEREICH+DOERING+KYPRIANOU_RealselfsimilarprocessesStartedfromOrigin2017},  with $\PP^\ast_j(\cdot)=\PP(\cdot|J_0^\ast=j)$ and $\PP_\pi^\ast$ defined accordingly.

	\subsection{Exponential integrals and exponential functionals of Markov additive processes}
	
		In Section \ref{S3b} below we will fully characterize stationarity of the MMGOU process driven by the bivariate MAP $((\xi,\eta),J)$.  As we will see, similar to the Lévy setting, stationarity properties of the MMGOU process are closely related to exponential functionals of the driving processes. We thus introduce for any bivariate MAP $((\zeta,\chi),J)$ the exponential integrals
	\begin{align*}
	\frace_{(\zeta,\chi)}(t)&:=\int_{(0,t]} e^{-\zeta_{u-}}d\chi_u, \quad t\geq 0,\\
	\text{and}\quad\fracef_{(\zeta,\chi)}(t)&:=e^{-\zeta_t}\int_{(0,t]} e^{\zeta_{u-}}d\chi_u, \quad t\geq 0,
	\end{align*}
	and in particular we write the \emph{exponential functional} of $((\zeta,\chi),J)$ (assuming existence of this limit) as \begin{equation}\label{MAPexpfunc} \mathfrak{E}_{(\zeta,\chi)}^\infty:= \int_{(0,\infty)} e^{-\zeta_{t-}}d\chi_t.\end{equation}

	The next lemma provides a useful relation between the exponential integral of the MAP $((\xi,\eta),J)$ driving the MMGOU process, and the exponential integral of the MAP $((\xi,L),J)$ for the process $L$ appearing in the SDE of the MMGOU process. Moreover, a relation between the two types of exponential integrals $\frace$ and $\fracef$ will be shown. The lemma is a generalization of \cite[Prop. 2.3]{LINDNER+MALLER_LevyintegralsStationarityGOUP} to the Markov-modulated situation. Note that in contrast to the Lévy setting treated in \cite{LINDNER+MALLER_LevyintegralsStationarityGOUP}, we have to use dual processes here to formulate the statement. Further note that the notion of time-reversion used in \cite{LINDNER+MALLER_LevyintegralsStationarityGOUP} for Lévy processes differs slightly from the notion of time-reversion used for MAPs introduced above.
	
	\begin{lem}\label{LindnerTh23MAPversion}
		Let $((\xi,\eta),J)$ be a bivariate MAP. Then $((\xi,L),J)$ with $(L_t)_{t\geq 0}$ defined in \eqref{eq-ULviaxieta} is a MAP. Moreover
		\begin{itemize}
			\item[a)] $\mathfrak{E}_{(\xi,L)}(t)=\mathfrak{E}_{(\xi,\eta)}(t)+[e^{-\xi},\eta]_t$ a.s. for all $t\geq 0$.
			\item[b)] $\mathfrak{E}_{(-\xi^*,-L^*)}(t)$ under $\PP_\pi^\ast$ is equal in law to $\mathfrak{F}_{(\xi,\eta)}(t)$ under $\PP_\pi$ for all $t\geq 0$.
		\end{itemize}
	\end{lem}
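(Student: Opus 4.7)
The plan is to split the lemma into three pieces (the MAP property of $((\xi,L),J)$, part (a), part (b)), handle the first two by direct stochastic calculus on the explicit formula \eqref{eq-ULviaxieta}, and address (b) by combining (a) with the time-reversal duality \eqref{DualityRelationMAPs}.

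First, I would verify that $((\xi,L),J)$ is an $\FF$-MAP: by \eqref{eq-ULviaxieta}, $L$ differs from $\eta$ only by the $J$-modulated continuous drift $-\int_0^\cdot\sigma_{\xi,\eta}(J_s)\,ds$ and the sum over joint jumps $\sum(e^{-\Delta\xi_s}-1)\Delta\eta_s$. Both terms preserve a path decomposition of the form \eqref{MAPpathdescription} (with possibly modified Lévy triplets and transition jumps), so the converse characterization stated after \eqref{MAPpathdescription} yields the MAP property. For part (a), I compute both sides pathwise. From \eqref{eq-ULviaxieta},
$$\mathfrak{E}_{(\xi,L)}(t)-\mathfrak{E}_{(\xi,\eta)}(t)=\int_{(0,t]}e^{-\xi_{s-}}\,d(L-\eta)_s=-\int_{(0,t]}e^{-\xi_{s-}}\sigma_{\xi,\eta}(J_s)\,ds+\sum_{0<s\leq t}e^{-\xi_{s-}}(e^{-\Delta\xi_s}-1)\Delta\eta_s.$$
On the other hand, Itô's formula applied to $x\mapsto e^{-x}$ yields $d(e^{-\xi})^c_s=-e^{-\xi_{s-}}\,d\xi_s^c$ and $\Delta(e^{-\xi})_s=e^{-\xi_{s-}}(e^{-\Delta\xi_s}-1)$, so that $[e^{-\xi},\eta]_t=[(e^{-\xi})^c,\eta^c]_t+\sum_{0<s\leq t}\Delta(e^{-\xi})_s\Delta\eta_s$ reproduces exactly the right-hand side above, which proves (a).

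For part (b), the plan is to use (a) together with the duality relation \eqref{DualityRelationMAPs} applied to the bivariate MAP $((\xi,L),J)$ established above. I would first rewrite $\mathfrak{F}_{(\xi,\eta)}(t)=\int_{(0,t]}e^{-(\xi_t-\xi_{s-})}\,d\eta_s$ and, invoking (a) in integrated form together with an integration-by-parts, reexpress it pathwise as an integral against $L$ rather than $\eta$, of the form $\int_{(0,t]}e^{-(\xi_t-\xi_{s-})}\,dL_s$ modulo terms that telescope to zero. Next I substitute $s=t-u$: by \eqref{DualityRelationMAPs}, the process $(\xi_{(t-u)-}-\xi_t,L_{(t-u)-}-L_t,J_{(t-u)-})_{0\leq u\leq t}$ under $\PP_\pi$ is equal in law to $(\xi^*_u,L^*_u,J^*_u)_{0\leq u\leq t}$ under $\PP^*_\pi$, so after tracking the sign produced by the reversal of orientation the substituted integral becomes $-\int_{(0,t]}e^{\xi^*_{u-}}\,dL^*_u=\mathfrak{E}_{(-\xi^*,-L^*)}(t)$, which is the claimed identity in law.

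The main obstacle will be making the substitution $s\mapsto t-u$ rigorous for Stieltjes integrals with càdlàg integrators and integrands having simultaneous jumps. One must carefully identify the left and right limits of the time-reversed paths (so that, for example, the càdlàg modification $s\mapsto\xi_{(t-s)-}$ has left limit $\xi_{t-s}$ at $s$, matching up with $\xi^*_{u-}$ after duality), check that the boundary contributions at $s=0$ and $s=t$ vanish (using that under $\PP_\pi$ the chain $J$ has no jump at a deterministic time a.s.), and verify that the reflected jump sizes cancel correctly against the signs in $(-\xi^*,-L^*)$. Part (a) is critical precisely here: by absorbing the cross-variation $[e^{-\xi},\eta]$ into the integrator $L$, the time reversal produces $\mathfrak{E}_{(-\xi^*,-L^*)}$ without leftovers. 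Without this rewrite, the Gaussian covariance $\sigma_{\xi,\eta}$ and the joint-jump terms $(e^{-\Delta\xi}-1)\Delta\eta$ would appear as uncompensated residuals on the dual side.
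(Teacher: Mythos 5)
Your treatment of the MAP property and of part (a) is correct: the path-decomposition argument is the one the paper uses, and your proof of (a) by writing $L-\eta$ explicitly from \eqref{eq-ULviaxieta} and computing $[e^{-\xi},\eta]_t=[(e^{-\xi})^c,\eta^c]_t+\sum_{0<s\leq t}\Delta(e^{-\xi})_s\Delta\eta_s$ via It\^o is a legitimate, slightly more direct alternative to the paper's route, which instead uses $L=\eta+[U,\eta]$ (Lemma \ref{PropExplicitetaB} c)) and the SDE \eqref{StochExpSDE} of the stochastic exponential.

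Part (b), however, has a genuine gap. Your first step claims the pathwise rewrite $\fracef_{(\xi,\eta)}(t)=\int_{(0,t]}e^{-(\xi_t-\xi_{s-})}\,dL_s$ ``modulo terms that telescope to zero''. This is false whenever $\sigma_{\xi,\eta}\not\equiv 0$ or $\xi$ and $\eta$ have common jumps: by \eqref{eq-ULviaxieta} the difference equals
\begin{equation*}
e^{-\xi_t}\Big(-\int_{(0,t]}e^{\xi_{s-}}\sigma_{\xi,\eta}(J_s)\,ds+\sum_{0<s\leq t}e^{\xi_{s-}}\big(e^{-\Delta\xi_s}-1\big)\Delta\eta_s\Big),
\end{equation*}
which does not vanish and is not $[e^{-\xi},\eta]_t$; part (a) concerns the weights $e^{-\xi_{s-}}$, not $e^{+\xi_{s-}}$, so it cannot be ``invoked in integrated form'' to justify this step. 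Your second step then treats $s\mapsto t-u$ as a change of variables, but for semimartingale integrals time reversal is not a reparametrisation: it produces an additional quadratic covariation term (this is exactly \cite[Thm. VI.22]{PROTTER_StochIntandSDE}, which the paper's proof invokes). As it happens, the covariation generated by reversing the $dL$-integral is precisely the nonzero error of your first step, so your final identity in law is correct --- but your argument never establishes this cancellation, and as written both intermediate claims are wrong. The clean order of operations, as in the paper (following the L\'evy-case argument of Lindner--Maller), is the opposite of yours: first reverse the $d\eta$-integral with the time-reversal theorem, which yields $\fracef_{(\xi,\eta)}(t)=\int_{(0,t)}e^{-\xi^\dagger_{s-}}d\eta^\dagger_s+[e^{-\xi^\dagger},\eta^\dagger]_{t-}$ for the reversed pair $(\xi^\dagger_s,\eta^\dagger_s)=(\xi_t-\xi_{(t-s)-},\eta_t-\eta_{(t-s)-})$, and only then apply part (a) to this reversed pair, identifying the sum with $\frace_{(-\xi^\ast,-L^\ast)}(t)$ in law via \eqref{DualityRelationMAPs}. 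Two details you gloss over also need attention there: one must check that the $L$-process built from the reversed pair coincides (in law, under $\PP_\pi^\ast$) with $-L^\ast$, and the passage from the open interval $(0,t)$ to $(0,t]$ uses that $\Delta\eta_t=0$ a.s. for fixed $t$.
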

	\begin{proof}
		The path decomposition \eqref{MAPpathdescription} of $(\xi,\eta)$ implies a corresponding decomposition of $(\xi,L)$ via the definition of $(L_t)_{t\geq 0}$. Hence  $((\xi,L),J)$  is a MAP. \\
		For a) note that by Lemma \ref{PropExplicitetaB}c) the relation between $\eta$ and $L$ may be described by \eqref{eq-LviaUeta} with $(U_t)_{t\geq 0}$ as in \eqref{eq-ULviaxieta}. Thus we obtain
		\begin{align*}
		\FE_{(\xi,L)}(t)&=\int_{(0,t]} e^{-\xi_{s-}}dL_s=\int_{(0,t]} e^{-\xi_{s-}}d\left(\eta_s+[U,\eta]_s\right)=\FE_{(\xi,\eta)}(t)+\left[\int_{(0,\cdot]}  e^{-\xi_{s-}}dU_s,\eta\right]_t\\
		&= \FE_{(\xi,\eta)}(t)+\left[\int_{(0,\cdot]} \cE(U)_{s-} dU_s,\eta\right]_t = \FE_{(\xi,\eta)}(t)+\left[1+\cE(U),\eta\right]_t\\& = \FE_{(\xi,\eta)}(t)+\left[e^{-\xi},\eta\right]_t , \quad \text{a.s. for all } t\geq 0,
		\end{align*}
		where we have used the SDE of the stochastic exponential \eqref{StochExpSDE} in the second line.\\
		For b) define 
		$$((\xi^\dagger_s,\eta^\dagger_s), J^\dagger_s)_{0\leq s\leq t}:=(\xi_t-\xi_{(t-s)-},\eta_t-\eta_{(t-s)-}, J_{(t-s)-})_{0\leq s\leq t},$$
		and denote the completed natural filtration of $((\xi^\dagger_s, \eta^\dagger_s),J^\dagger_s)_{0\leq s\leq t}$ by $\HH$. Then $((\xi^\dagger_s, \eta^\dagger_s), J^\dagger_s)_{0\leq s\leq t}$ under $\PP_\pi$ is equal in law to $((-\xi_s^\ast, -\eta_s^\ast), J_s^\ast)_{0\leq s\leq t}$ under $\PP_\pi^\ast$. Further for any given càdlàg process $(D_s)_{0\leq s\leq t}$ define the time-reversed process $(\widetilde{D}_s)_{0\leq s\leq t}$ as 
		\begin{align*}
		\widetilde{D}_s:=\begin{cases}
		0,& s=0\\
		D_{(t-s)-}-D_{t-},& 0<s<t\\
		D_0-D_{t-},&s=t,
		\end{cases}
		\end{align*} 
		which is again a càdlàg process, see e.g. \cite[Section VI.4]{PROTTER_StochIntandSDE}.
		Then, by an application of \cite[Thm. VI.22]{PROTTER_StochIntandSDE}, we obtain in complete analogy to the proof of \cite[Lemma 6.1]{LINDNER+MALLER_LevyintegralsStationarityGOUP} in the Lévy case, that 
		\begin{align*}
		\widetilde{\mathfrak{E}_{(\xi^\dagger,\eta^\dagger)}}(u)+\widetilde{\left[e^{-\xi_\cdot^\dagger}, \eta^\dagger \right]}_u=\int_{(0,u]}  e^{-\xi^\dagger_{(t-s)-}}d\widetilde{\eta_s^\dagger},  \quad \text{a.s. for } 0\leq u\leq t,
		\end{align*}
		where the integrals are taken with respect to $\HH$. This implies, again in analogy to the proof of \cite[Lemma 6.1]{LINDNER+MALLER_LevyintegralsStationarityGOUP},
		\begin{align*}
		\int_{(0,t)}  e^{-\xi^\dagger_{s-}}d\eta_s^\dagger +\left[e^{-\xi^\dagger},\eta^\dagger\right]_{t-} = -\int_{(0,t]}  e^{-\xi_t+\xi_{s-}}d\widetilde{\eta^\dagger}_s= \int_{(0,t)} e^{-\xi_t+\xi_{s-}}d\eta_s =e^{-\xi_t}\int_{(0,t)} e^{\xi_{s-}}d\eta_s \quad \PP_\pi\text{-a.s.}
		\end{align*}
		As $\Delta\eta_t=\Delta\eta_t^\dagger=0$ a.s. for fixed $t\geq 0$, this yields that
		$$-\int_{(0,t]}  e^{\xi^\ast_{s-}}d\eta_s^\ast -\left[e^{\xi^\ast},\eta^\ast \right]_{t} \text{ under } \PP_\pi^\ast\text{ is equal in law to } e^{-\xi_t}\int_{(0,t]} e^{\xi_{s-}}d\eta_s\text{ under } \PP_\pi,$$
		and via a) we obtain b).
	\end{proof}

	Necessary and sufficient conditions for the convergence of the exponential integral $\frace(t)$ as $t\to\infty$ have been provided in \cite[Thm. 4.1]{BEHME+SIDERIS_ExpFuncMAP2020}. Using the relation between the integrals $\frace$ and $\fracef$ from Lemma \ref{LindnerTh23MAPversion}  we may now characterize convergence of the exponential integral $\fracef(t)$ as $t\to\infty$ in the following proposition.

	\begin{Prop}\label{ForwardConvergence}
		Let $((\xi,\eta),J)$ and $((\xi,L),J)$ with $(L_t)_{t\geq 0}$ defined in \eqref{eq-ULviaxieta} be bivariate MAPs. 
		Assume that $\lim_{t\in\mathbb{T}_j,t\to\infty}\xi_t=\infty$ $\PP_j$-a.s. and 
			\begin{equation}\label{eq-nesssuffweak}
		I_{(\xi,L)}^j:=\int_{(1,\infty)} \frac{\log q}{A^j_{\xi}(\log q)} |d\bar{\nu}_{L}^j(q)| <\infty,
			\end{equation}
	for some $j\in S$, where 
		\begin{align*} A^j_\xi(x)&:=\gamma_{\xi^j} + \nu_{\xi^j} ((1,\infty)) + \int_{(1,x)} \nu_{\xi^j} ((y,\infty)) dy \\ &\quad -q_{jj} \Bigg(\PP_j\big(\xi_{\taure_1(j)}-\xi_{\tauex_1(j)}\in (1,\infty)\big) + \int_{(1,x)} \PP_j\big(\xi_{\taure_1(j)}-\xi_{\tauex_1(j)}\in (y,\infty)\big) dy \Bigg),\end{align*}
	and 
	$$\bar{\nu}_L^j(dy):= \nu_{L^{j}}(dy) - q_{jj}\Bigg( \PP_j(L_{\taure_1(j)}- L_{\tauex_1(j)}\in dy) + \PP_j\Bigg( \int_{[\tauex_1(j),\taure_1(j)]} e^{-(\xi_{s-}-\xi_{\tauex_1(j)})} d L_s \in dy\Bigg)\Bigg),$$
	then $\frace_{(-\xi^\ast,-L^\ast)}(t)\to \frace_{(-\xi^\ast,-L^\ast)}^\infty$ in $\PP^\ast_j$-probability as $t\to\infty$ for all $j\in S$, and the integral $\fracef_{(\xi,\eta)}(t)$ converges in distribution  under $\PP_\pi$ with the limit being equal in law to $\frace_{(-\xi^\ast,-L^\ast)}^\infty$ under $\PP_\pi^\ast$ and thus independent of $j$.\\
	Conversely, if $\lim_{t\in\mathbb{T}_j,t\to\infty}\xi_t<\infty$ $\PP_j$-a.s. for all $j\in S$,  or if $I_{(\xi,L)}^j=\infty$ for all $j\in S$, then either there exists a sequence $\{c_j,j\in S\}$ in $\RR$ such that $\frace_{(-\xi^\ast,-L^\ast)}(t)= c_{J_0^\ast} -c_{J_t^\ast}e^{\xi_t^\ast}$ $\PP_\pi^\ast$-a.s. and
	$$\fracef_{(\xi,\eta)}(t) \text{ is equal in law to } c_{J_t}- c_{J_0} e^{-\xi_{t}} \text{ under } \PP_\pi,$$
or 	
	 $|\frace_{(-\xi^\ast,-L^\ast)}(t)|\overset{\PP^\ast_\pi}\longrightarrow \infty$ as $t\to\infty$  and $$|\fracef_{(\xi,\eta)}(t)|\overset{\PP_\pi}\to \infty \text{ as }t\to\infty.$$ 
	\end{Prop}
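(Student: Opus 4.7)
The plan is to reduce the statement to the Behme--Sideris convergence theorem \cite[Thm.~4.1]{BEHME+SIDERIS_ExpFuncMAP2020} applied to the dual bivariate MAP $((-\xi^\ast,-L^\ast),J^\ast)$, and then transfer the resulting convergence of $\frace_{(-\xi^\ast,-L^\ast)}(t)$ back to $\fracef_{(\xi,\eta)}(t)$ through the distributional identity of Lemma \ref{LindnerTh23MAPversion}\,b). The reason for passing to the dual is precisely that Lemma \ref{LindnerTh23MAPversion}\,b) reinterprets $\fracef_{(\xi,\eta)}(t)$ as a forward exponential integral for the time-reversed driver, while \cite[Thm.~4.1]{BEHME+SIDERIS_ExpFuncMAP2020} provides a complete characterisation of convergence of such forward integrals.

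The central step is to verify that the hypotheses stated in the proposition coincide with those of \cite[Thm.~4.1]{BEHME+SIDERIS_ExpFuncMAP2020} when applied to $((-\xi^\ast,-L^\ast),J^\ast)$. For the drift hypothesis this would follow from \eqref{DualityRelationMAPs}: $\lim_{t\in\TT_j}\xi_t=\infty$ $\PP_j$-a.s.\ translates into $\lim_{t\in\TT^\ast_j}(-\xi^\ast_t)=\infty$ $\PP^\ast_j$-a.s., which is the drift-to-$+\infty$ condition needed for the dual additive component $-\xi^\ast$. For the Erickson-type integrability condition, a calculation using $X^{j,\ast}\eqdr -X^j$ and $\Phi_{X^\ast}^{ij}=-\Phi_X^{ji}$ should show that the conflated L\'evy process $\widehat{-\xi^\ast}^j$ at state $j$ has triplet $(\gamma_{\xi^j},\sigma^2_{\xi^j},\nu_{\hat\xi^j})$, so that $A^j_\xi$ in the statement is exactly the Erickson function for $\widehat{-\xi^\ast}^j$. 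A parallel computation, combined with a careful treatment of the excursions of $J^\ast$ from $j$ (whose contribution to the integrator $-L^\ast$, when evaluated through $\int e^{\xi^\ast_{u-}}\,d(-L^\ast_u)$, splits into the boundary piece $-(L_{\taure_1(j)}-L_{\tauex_1(j)})$ plus the exponentially weighted piece $\int_{[\tauex_1(j),\taure_1(j)]}e^{-(\xi_{s-}-\xi_{\tauex_1(j)})}dL_s$), would identify $\bar\nu^j_L$ as the effective L\'evy measure of the integrator in the Erickson condition for the dual. Thus $I^j_{(\xi,L)}<\infty$ matches verbatim the integrability hypothesis of \cite[Thm.~4.1]{BEHME+SIDERIS_ExpFuncMAP2020} for $((-\xi^\ast,-L^\ast),J^\ast)$.

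Once the hypotheses are matched, \cite[Thm.~4.1]{BEHME+SIDERIS_ExpFuncMAP2020} delivers the claimed convergence $\frace_{(-\xi^\ast,-L^\ast)}(t)\to\frace^\infty_{(-\xi^\ast,-L^\ast)}$ in $\PP^\ast_j$-probability for every $j\in S$ in the positive case, and in the negative case the stated dichotomy: either $\frace_{(-\xi^\ast,-L^\ast)}(t)=c_{J_0^\ast}-c_{J_t^\ast}e^{\xi_t^\ast}$ $\PP^\ast_\pi$-a.s.\ for some $\{c_j\}_{j\in S}\subset\RR$, or $|\frace_{(-\xi^\ast,-L^\ast)}(t)|\to\infty$ in $\PP^\ast_\pi$-probability. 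To pass to $\fracef_{(\xi,\eta)}(t)$ I would then invoke Lemma \ref{LindnerTh23MAPversion}\,b): for each fixed $t\geq 0$ the random variable $\frace_{(-\xi^\ast,-L^\ast)}(t)$ under $\PP^\ast_\pi$ is equal in law to $\fracef_{(\xi,\eta)}(t)$ under $\PP_\pi$, and this distributional identity transports convergence/divergence in probability into convergence/divergence in distribution. In the affirmative case the limit is therefore $\frace^\infty_{(-\xi^\ast,-L^\ast)}$ under $\PP^\ast_\pi$, independently of the starting state; in the degenerate case one reads off the affine form $c_{J_t}-c_{J_0}e^{-\xi_t}$ directly from the distributional identity applied to the time-reversal of $J^\ast$.

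The hard part will be the bookkeeping that identifies the conflated L\'evy triplets and the excursion distributions of $((-\xi^\ast,-L^\ast),J^\ast)$ with the quantities $A^j_\xi$ and $\bar\nu^j_L$ appearing in the proposition; in particular, recognising that the weighted excursion integral $\int_{[\tauex_1(j),\taure_1(j)]}e^{-(\xi_{s-}-\xi_{\tauex_1(j)})}dL_s$ is precisely what contributes to the effective jump measure when an excursion of $J$ from $j$ is collapsed to a single jump on the conflated time axis $\TT_j$ under the exponential weighting inherent in $\frace_{(-\xi^\ast,-L^\ast)}$. Once this identification is rigorously established, the remainder of the proof is a routine transfer via duality and Lemma \ref{LindnerTh23MAPversion}\,b).
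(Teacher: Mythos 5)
Your proposal follows essentially the same route as the paper: apply \cite[Thm.~4.1]{BEHME+SIDERIS_ExpFuncMAP2020} to the dual MAP $((-\xi^\ast,-L^\ast),J^\ast)$, check via duality that the drift condition and the Erickson-type integral condition (through the identifications $A^j_{-\xi^\ast}=A^j_\xi$ and $\bar\nu^j_{-L^\ast}=\bar\nu^j_L$, using that the conflated process $\widehat{-\xi^\ast}$ has the law of $\hat\xi$) coincide with the stated hypotheses, and then transfer convergence, degeneracy, or divergence to $\fracef_{(\xi,\eta)}(t)$ through the fixed-$t$ distributional identity of Lemma \ref{LindnerTh23MAPversion}\,b). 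This is exactly the paper's argument, so the proposal is correct and essentially identical in approach.
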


\begin{proof}
	By Lemma \ref{LindnerTh23MAPversion} for all $t\geq 0$  the integral $\mathfrak{F}_{(\xi,\eta)}(t)$ under $\PP_\pi$ is equal in law to  $\mathfrak{E}_{(-\xi^*,-L^*)}(t)$ under $\PP_\pi^\ast$. Moreover, by \cite[Thm. 4.1 or Prop. 4.12]{BEHME+SIDERIS_ExpFuncMAP2020} 
	if $\lim_{t\in \TT_j, t\to \infty} -\xi^\ast_t=\infty$ $\PP_j^\ast$-a.s. and 
	\begin{equation}\label{eq-nesssuffweakdual}
	\int_{(1,\infty)} \frac{\log q}{A^j_{-\xi^\ast}(\log q)} |d\bar{\nu}_{-L^\ast}^j(q)| <\infty,
	\end{equation}
	for some $j\in S$, then $\frace_{(-\xi^\ast,-L^\ast)}(t)\to \frace_{(-\xi^\ast,-L^\ast)}^\infty$ in $\PP^\ast_j$-probability as $t\to\infty$ for all $j\in S$. Hereby $\lim_{t\in \TT_j, t\to \infty} -\xi^\ast_t = \lim_{t\to \infty} -\hat{\xi}^\ast_t$, where the conflated process $\hat{\xi}^\ast$ is a Lévy process which  has the same law as $-\hat{\xi}$. Hence $\lim_{t\in \TT_j, t\to \infty} -\xi^\ast_t=\infty$ $\PP_j^\ast$-a.s. is equivalent to our assumption $\lim_{t\in \TT_j, t\to \infty} \xi_t=\infty$ $\PP_j$-a.s. Further from the definition of the dual process we see that $A^j_{-\xi^\ast}(x)=A^j_\xi (x)$ for all $x\geq 1$ and $\bar{\nu}_{-L^\ast}^j(\cdot) = \bar{\nu}_{L}^j(\cdot)$ such that \eqref{eq-nesssuffweakdual} is equivalent to \eqref{eq-nesssuffweak}. Hence from $\lim_{t\in \TT_j, t\to \infty} \xi_t=\infty$ $\PP_j$-a.s. and \eqref{eq-nesssuffweak} it follows that $\frace_{(-\xi^\ast,-L^\ast)}(t)\to \frace_{(-\xi^\ast,-L^\ast)}^\infty$ in $\PP^\ast_j$-probability as $t\to\infty$ for all $j\in S$, and thus this convergence holds in $\PP_\pi^\ast$-probability as well. Via Lemma \ref{LindnerTh23MAPversion} this implies that
	$\fracef_{(\xi,\eta)}(t)$ converges in distribution as $t\to\infty$ under $\PP_\pi$ and by uniqueness of the distributional limit we identify it with the distribution of $\frace_{(-\xi^\ast,-L^\ast)}^\infty$ under $\PP_\pi^\ast$.\\
	For the converse, by \cite[Thm. 4.1]{BEHME+SIDERIS_ExpFuncMAP2020}, if $\liminf_{t\in \TT_j, t\to \infty}\xi_t<\infty$ $\PP_j$-a.s. for all $j\in S$ (equivalently $\liminf_{t\in \TT_j, t\to \infty}-\xi^\ast_t<\infty$ $\PP_j^\ast$-a.s. for all $j\in S$) or if \eqref{eq-nesssuffweak} (equivalently \eqref{eq-nesssuffweakdual}) fails for all $j\in S$, then either there exists a sequence $\{c_j,j\in S\}$ in $\RR$ such that the exponential integral is degenerate in the sense that
	\begin{equation}\label{eq_deg00}
	\frace_{(-\xi^\ast,-L^\ast)}(t) = c_{J^\ast_0}- c_{J^\ast_t} e^{\xi^\ast_{t}} \quad \PP_{\pi}^\ast \text{-a.s.} 
	\end{equation}
	for all $t\geq 0$, or
	\begin{equation}\label{diverge}|\frace_{(-\xi^\ast,-L^\ast)}(t)|\overset{\PP_\pi^\ast}\longrightarrow \infty \quad \text{as }t\to\infty.\end{equation}	
	Clearly in case of \eqref{diverge} we may conclude via Lemma \ref{LindnerTh23MAPversion} that  $|\fracef_{(\xi,\eta)}(t)|\overset{d}\to \infty \text{ under } \PP_\pi \text{ as }t\to\infty,$
	from which we obtain for any $K>0$ that
	\begin{align*}
	\PP_\pi(|\fracef_{(\xi,\eta)}(t)|<K) &= \int_\RR \mathds{1}_{(-K,K)}(\fracef_{(\xi,\eta)}(t)) d\PP_\pi \underset{t\to\infty}\longrightarrow 0,
	\end{align*}
	i.e. 
	$$|\fracef_{(\xi,\eta)}(t)|\overset{\PP_\pi}\to \infty \text{ as }t\to\infty.$$
In the degenerate case \eqref{eq_deg00} the statement follows immediately, since
$c_{J^\ast_0}- c_{J^\ast_t} e^{\xi^\ast_{t}}$ under $\PP_\pi^\ast$ is equal in law to $c_{J_t}-c_{J_0}e^{-\xi_t}$ under $\PP_\pi$.
\end{proof}

	\subsection{Stationarity of the Markov-modulated GOU}\label{S3b}
	
We are now ready to state and prove the main result of this section which characterizes stationarity of the MMGOU process $(V_t)_{t\geq 0}$ and generalizes \cite[Thm. 2.1]{LINDNER+MALLER_LevyintegralsStationarityGOUP} to the Markov-modulated setting.
	
\begin{theorem}\label{StationarityMAPGOU}
		Let $(V_t)_{t\geq 0}$ be the MMGOU process driven by the bivariate MAP $((\xi,\eta),J)$ and recall that  $((\xi,L),J)$ with $(L_t)_{t\geq 0}$ defined in \eqref{eq-ULviaxieta} is a MAP.\\
		If $(V_t)_{t\geq 0}$ is strictly stationary, then one of the following two conditions is satisfied:
		\begin{itemize}
			\item[a)] The exponential integral $\mathfrak{E}_{(-\xi^*,-L^*)}(t)$ converges in $\PP_\pi^\ast$-probability to some proper random variable for $t\to\infty$; equivalently 
			$\lim_{t\in\mathbb{T}_j,t\to\infty}\xi_t=\infty$ $\PP_j$-a.s. and $I_{(\xi,L)}^j<\infty$ hold for some $j\in S$.
			\item[b)] There exists a sequence of constants $\{c_j, j\in S\}$ such that $V_t=c_{J_t}$ $\PP_\pi$-a.s. for all $t\geq 0$. In particular in this case $\PP_\pi(V_t=c_j)=\pi_j$ for all $t\in\mathbb{R}$, thus $V_t\iv V_0$.
		\end{itemize}
	Conversely, if a) or b) holds, then there exists a finite random variable $V_\infty$, such that $(V_t)_{t\geq 0}$ started with $V_0\iv V_\infty$ is strictly stationary. In case of a) the stationary random variable $V_\infty$ under $\PP_\pi$ is equal in law to $\mathfrak{E}_{(-\xi^*,-L^*)}^{\infty}$ under $\PP_\pi^\ast$.
	\end{theorem}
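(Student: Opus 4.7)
The plan is to combine the explicit representation of the MMGOU process,
$$V_t = e^{-\xi_t}V_0 + \mathfrak{F}_{(\xi,\eta)}(t), \quad t\geq 0,$$
with the three-fold dichotomy for $\mathfrak{F}_{(\xi,\eta)}(t)$ supplied by Proposition \ref{ForwardConvergence}, and with the time-reversal identity from Lemma \ref{LindnerTh23MAPversion} b), in the spirit of Lindner and Maller's argument \cite{LINDNER+MALLER_LevyintegralsStationarityGOUP} for the L\'evy case.

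\textbf{Forward direction.} Assume $(V_t, J_t)_{t\geq 0}$ is strictly stationary under $\PP_\pi$, so in particular $\{V_t:t\geq 0\}$ is tight. Proposition \ref{ForwardConvergence} then forces one of three scenarios for $\mathfrak{F}_{(\xi,\eta)}(t)$ as $t\to\infty$: convergence in distribution (yielding case (a) directly), divergence $|\mathfrak{F}_{(\xi,\eta)}(t)|\overset{\PP_\pi}\to\infty$, or the degenerate identity $\mathfrak{F}_{(\xi,\eta)}(t)\overset{d}{=}c_{J_t}-c_{J_0}e^{-\xi_t}$. In the degenerate case I would conclude from $V_t\overset{d}{=}e^{-\xi_t}(V_0-c_{J_0})+c_{J_t}$, together with strict stationarity and the conditional independence of $V_0$ and the MAP given $J_0$, that $V_0=c_{J_0}$ almost surely, producing case (b). To rule out divergence, I would condition on $J_0=j$ and argue on the first sojourn interval $\mathbb{T}_j^1$: on this interval $V_0$ is independent of the driving MAP, so tightness of the sum $V_t = e^{-\xi_t}V_0+\mathfrak{F}_{(\xi,\eta)}(t)$ combined with $|\mathfrak{F}_{(\xi,\eta)}(t)|\to\infty$ would force $e^{-\xi_t}|V_0|\to\infty$ in probability to compensate, contradicting tightness of $V_0$.

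\textbf{Converse direction.} Case (b) is immediate once one verifies via Proposition \ref{ForwardConvergence} that $V_0=c_{J_0}$ a.s. propagates to $V_t=c_{J_t}$ a.s. for every $t\geq 0$; then strict stationarity of $(V_t,J_t)$ is inherited from that of $(J_t)$ under $\PP_\pi$. For case (a), let $V_\infty$ have the distribution of $\mathfrak{E}_{(-\xi^*,-L^*)}^\infty$ under $\PP_\pi^*$ and start the process with $V_0\overset{d}{=}V_\infty$, conditionally independent of the MAP given $J_0$. Applying the duality \eqref{DualityRelationMAPs} together with Lemma \ref{LindnerTh23MAPversion} b), the expression $e^{-\xi_t}V_0+\mathfrak{F}_{(\xi,\eta)}(t)$ under $\PP_\pi$ transforms into $e^{\xi^*_t}\tilde V_0+\mathfrak{E}_{(-\xi^*,-L^*)}(t)$ under $\PP_\pi^*$, with $\tilde V_0$ an independent copy of $V_\infty$; on the other hand the Markov property of $(\xi^*,L^*,J^*)$ together with the conditional stationarity of its increments yields the in-law decomposition $\mathfrak{E}_{(-\xi^*,-L^*)}^\infty \overset{d}{=} \mathfrak{E}_{(-\xi^*,-L^*)}(t)+e^{\xi^*_t}\widetilde{\mathfrak{E}}_{(-\xi^*,-L^*)}^\infty$, where $\widetilde{\mathfrak{E}}^\infty$ is an independent copy started from $J_t^*$. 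Matching these two decompositions gives $V_t\overset{d}{=}V_\infty$, and strict stationarity of the full process $(V_t,J_t)_{t\geq 0}$ then follows from this marginal identity together with the Markov property of $(V,J)$, since its transition kernel and its initial law determine the whole law.

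\textbf{Main obstacle.} The hardest step is the converse in case (a): correctly transporting the conditional independence structure of $V_0$ (which is conditionally independent of the MAP given $J_0$) through the time-reversal, since under $\PP_\pi^*$ the variable $J_0$ corresponds to $J_t^*$ rather than to $J_0^*$. One must carefully identify the conditional distribution of $\tilde V_0$ given $J_t^*=j$ with the stationary law $\mathfrak{E}_{(-\xi^*,-L^*)}^\infty$ started from $J_0^*=j$, so that the two decompositions really do match in law after conditioning on the modulator. Excluding the divergent scenario in the forward direction is likewise delicate, since one must upgrade the global $\PP_\pi$-statement to a conditional one on a sojourn interval on which the classical independence argument from the L\'evy case can be applied.
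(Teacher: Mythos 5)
Your treatment of the divergent scenario in the forward direction contains a genuine error. You claim that $|\mathfrak{F}_{(\xi,\eta)}(t)|\overset{\PP_\pi}\to\infty$ together with tightness of $V_t=e^{-\xi_t}V_0+\mathfrak{F}_{(\xi,\eta)}(t)$ forces $e^{-\xi_t}|V_0|\to\infty$ in probability, ``contradicting tightness of $V_0$''. There is no contradiction: the product $e^{-\xi_t}|V_0|$ can diverge simply because $e^{-\xi_t}$ diverges, i.e.\ because $\xi_t\to-\infty$ in $\PP_\pi$-probability, while $V_0$ remains a fixed (hence tight) random variable. This scenario is in fact compatible with strict stationarity and is \emph{not} excluded by the theorem --- it is absorbed into case b). The paper's proof shows this: from $\xi_t\overset{\PP_\pi}\to-\infty$ and tightness of $V_t$ one gets $e^{\xi_t}V_t=V_0+\int_{(0,t]}e^{\xi_{s-}}d\eta_s\overset{\PP_\pi}\to 0$, so $\int_{(0,t]}e^{\xi_{s-}}d\eta_s\to -V_0$ in probability; since this integral is conditionally independent of $V_0$ given $J_0$, so is its limit $-V_0$, which forces $V_0$ to be constant given $J_0$, i.e.\ $V_0=c_{J_0}$ and hence $V_t=c_{J_t}$, which is exactly case b). Note that such stationary solutions really realize the divergent scenario: if $V_t=c_{J_t}$ with $c_{J_0}\neq 0$ and $\xi_t\to-\infty$, then $\mathfrak{F}_{(\xi,\eta)}(t)=c_{J_t}-c_{J_0}e^{-\xi_t}$ does diverge. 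So by ``ruling out'' divergence you both assert a false contradiction and lose part of case b).

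Two further points. First, in the degenerate case Proposition \ref{ForwardConvergence} only gives equality in law of the single random variable $\mathfrak{F}_{(\xi,\eta)}(t)$ with $c_{J_t}-c_{J_0}e^{-\xi_t}$; substituting this into $V_t=e^{-\xi_t}V_0+\mathfrak{F}_{(\xi,\eta)}(t)$ to write $V_t\overset{d}{=}e^{-\xi_t}(V_0-c_{J_0})+c_{J_t}$ requires a joint-law statement for $(\mathfrak{F}_{(\xi,\eta)}(t),\xi_t,J_t)$ that the proposition does not provide. The paper instead works with the $\PP_\pi^\ast$-a.s.\ degeneracy of $\mathfrak{E}_{(-\xi^\ast,-L^\ast)}$, identifies the pathwise form of $L$ by time reversal, and reads off from the SDE $dV_t=(V_{t-}-c_{J_{t-}})dU_t-dc_{J_t}$ that $V_t=c_{J_t}$. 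Second, your converse in case a) (duality plus the fixed-point decomposition $\mathfrak{E}^\infty\overset{d}{=}\mathfrak{E}(t)+e^{\xi_t^\ast}\widetilde{\mathfrak{E}}^\infty$) differs from the paper, which instead lets $t\to\infty$ in $V_t=A_{t-h,t}V_{t-h}+B_{t-h,t}$ using (MMGOU3) and then invokes the Markov property of $(V,J)$; your route is viable in principle, but the state-matching you flag must actually be carried out, e.g.\ by choosing the conditional law of $V_0$ given $J_0=j$ to be that of $\mathfrak{E}_{(-\xi^\ast,-L^\ast)}^\infty$ under $\PP_j^\ast$, and this is not done in the proposal.
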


Before we prove Theorem \ref{StationarityMAPGOU}, let us mention that the the Markov-modulated situation exhibits several differences to the case of $|S|=1$, i.e. when $(\xi,\eta)$ and $(\xi,L)$ are Lévy processes as it has been studied in \cite{LINDNER+MALLER_LevyintegralsStationarityGOUP}. Some of them will be highlighted in the following remarks.

\begin{remarks}
	\begin{enumerate}
		\item As in the Lévy case,  existence and the distribution of a strictly stationary solution of \eqref{MAPGOUSDE} is closely related to convergence of an exponential functional. However, in the Markov-modulated situation this exponential functional is driven by the dual process $((-\xi^*,-L^*),J^*)$, instead of the $(\xi,L)$ in the L\'evy setting. This agrees with a similar phenomenon previously observed in discrete time: While forward and backward iterations of random recurrence equations with i.i.d. coefficients lead to the same distribution, see \cite{GOLDIE+MALLER_StabilityPerpetuities2000}, this is no longer true for their Markov-modulated counterpart, i.e. for MMRREs, see \cite{ALSMEYER+BUCKMANN_StabilityPerpetuitiesMarkovianEnvironment}.
		\item Whenever $|S|$ is finite (which is in particular true in the L\'evy setting $|S|=1$), it has been pointed out in \cite[Rem. 4.2]{BEHME+SIDERIS_ExpFuncMAP2020} that convergence in $\PP_j$-probability of $\FE_{(-\xi^*,-L^*)}(t)$ as $t\to \infty$ is equivalent to $\PP_\pi$-almost sure convergence of $\FE_{(-\xi^*,-L^*)}(t)$ as long as degeneracy of the integral is ruled out. In general however this is not the case as illustrated in \cite[Ex. 4.4]{BEHME+SIDERIS_ExpFuncMAP2020}.
		\item In the Markov-modulated setting it is not necessary to assume $\xi_t\to\infty$ $\PP_\pi$-a.s. to prove convergence of $\FE_{(-\xi^*,-L^*)}(t)$ as $t\to \infty$. An example that demonstrates this specific behaviour of exponential functionals of MAPs is given by \cite[Ex. 4.3]{BEHME+SIDERIS_ExpFuncMAP2020}. Thus $\xi_t\to\infty$ $\PP_\pi$-a.s. is in general not necessary for strict stationarity of the MMGOU process.
		\item While the degenerate case in the L\'evy case yields an a.s. constant process, degeneracy in the Markov-modulated setting allows for more flexibility as the stationary MMGOU process may switch between the different constants $c_j, j\in S$. \\However, this will only happen if the additive component of the MAP $((\xi,\eta),J)$ does exhibit jumps induced by the jumps of $J$. Indeed, as it has been observed in \cite[Rem. 4.8]{BEHME+SIDERIS_ExpFuncMAP2020}, assuming $((\xi,\eta),J)$ to be a pure switching process (i.e. $(\xi^{(2)},\eta^{(2)})=(0,0)$ a.s. in the decomposition \eqref{MAPpathdescription}), we may conclude from \eqref{degenerate} that $c_j=c$ $\forall j\in S$ for some $c\in \RR$.\\
		Further note that the sequence $\{c_j,j\in S\}$ can only be uniquely determined, when $\xi$ is assumed to be not \emph{null-homologue}, i.e. if $\PP_j\left(\hat{\xi}^j_t=0\right)<1$ for all $j\in S$, as it was assumed in \cite{BEHME+SIDERIS_ExpFuncMAP2020}. Since we did not exclude null-homology here, we may have some $j\in S$ such that
		\begin{align*}
		\PP_j\left(\FE_{(-\xi^*,-L^*)}(t)=c_j\left(1-e^{\xi^*_t}\right) \Big| t\in \TT_j\right)=\PP_j\left(\FE_{(\xi^*,L^*)}(t)=0|t\in\TT^j\right)=1,
		\end{align*}
		and in this case $c_j$ can not be determined. 
	\end{enumerate}
\end{remarks}

\begin{proof}[Proof of Theorem \ref{StationarityMAPGOU}]
	We extend the proof of \cite[Thm. 2.1]{LINDNER+MALLER_LevyintegralsStationarityGOUP} to the Markov-modulated situation.\\ Suppose that $(V_t)_{t\geq 0}$ as defined in \eqref{MAPGOUexplicitxi} is strictly stationary under $\PP_\pi$. Then $V_t\iv V_0$ for all $t\geq 0$ and in particular $V_t\inV V_0$ as $t\to\infty$. 
	Fix $j\in S$ and assume first that $\lim_{t\in\mathbb{T}_j,t\to\infty}\xi_t=\infty$ $\PP_j$-a.s. Then by \cite[Lemma 4.10]{BEHME+SIDERIS_ExpFuncMAP2020} it holds $\lim_{t\in\mathbb{T}_j,t\to\infty}\xi_t=\infty$ $\PP_j$-a.s. for all $j\in S$. Hence we may conclude that $e^{-\xi_t}\inPpi 0$ as $t\to\infty$. Since $(V_t)_{t\geq 0}$ given by $$V_t=e^{-\xi_t}V_0 + e^{-\xi_t}\int_{(0,t]} e^{\xi_{s-}}d\eta_s=e^{-\xi_t}V_0 + \fracef_{(\xi,\eta)}(t), \quad t\geq 0,$$  is assumed to be strictly stationary, by Slutsky's theorem we must have that $\mathfrak{F}_{(\xi,\eta)}(t)$ converges in distribution to $V_0$ as $t\to\infty$. However by Proposition \ref{ForwardConvergence} this is only possible, if the conditions given in a) are fulfilled. \\
	Now assume that  $\lim_{t\in\mathbb{T}_j,t\to\infty}\xi_t<\infty$ $\PP_j$-a.s. for all $j\in S$, then by Proposition \ref{ForwardConvergence} two possible cases can occur. First, there may exist a  sequence of real constants $\{c_j, j\in S\}$ such that $\frace_{(-\xi^*,-L^*)}(t)=c_{J^\ast_0}+c_{J^\ast_t}e^{\xi^\ast_t}$ $\PP^\ast_\pi$-a.s. By \cite[Prop. 4.7]{BEHME+SIDERIS_ExpFuncMAP2020} this implies that
	$$-L_t^\ast = - \int_{(0,t]} c_{J_{s-}^\ast} e^{-\xi_{s-}^\ast} d(e^{\xi_s^\ast}) - \int_{(0,t]} dc_{J_s^\ast}\quad \PP_\pi^\ast\text{-a.s.}$$
	As dependencies are kept by time-reverting $((\xi^\ast,L^\ast),J^\ast)$ we conclude that 
	\begin{equation}\label{degenerate} L_t = - \int_{(0,t]} c_{J_{s-}} e^{-\xi_{s-}} d(e^{\xi_s}) - \int_{(0,t]} dc_{J_s} = - \int_{(0,t]} c_{J_{s-}} dU_s  - \int_{(0,t]} dc_{J_s}\quad \PP_\pi\text{-a.s.},\end{equation}
	with $(U_t)_{t\geq 0}$ as in \eqref{eq-ULviaxieta} such that $U_t=\cL og (e^{-\xi_t})$, $t\geq 0$. Hence $(V_t)_{t\geq 0}$ is a strictly stationary solution to the SDE
	$$dV_t=V_{t-} dU_t + dL_t = (V_{t-}-c_{J_{t-}})dU_t - dc_{J_t}, \quad t\geq 0,$$
	which is obviously solved by $V_t=c_{J_t}$ $\PP_\pi$-a.s. As $(c_{J_t})_{t\geq 0}$ is indeed strictly stationary under $\PP_\pi$ this implies b).\\
	Second, assume that $\lim_{t\in\mathbb{T}_j,t\to\infty}\xi_t<\infty$ $\PP_j$-a.s. for all $j\in S$ and $|\frace_{(-\xi^*,-L^*)}(t)|\inPpi\infty$ as $t\to\infty$  such that $|\fracef_{(\xi,\eta)}(t)|\overset{\PP_\pi}\to \infty$ as $t\to\infty$. Then since $V_t\overset{d}\to V_0$ as $t\to \infty$ under $\PP_\pi$ and because $V_0$ is finite, by \eqref{MAPGOUexplicitxi} and Slutsky's theorem we conclude that $|V_0|e^{-\xi_t}\inPpi\infty$, i.e. $\xi_t\inPpi - \infty$ as $t\to\infty$. Thus 
	\begin{align} \label{stopherenoncausal}
	V_0+\int_{(0,t]} e^{\xi_{s-}}d\eta_s=e^{\xi_t}V_t\overset{\PP_\pi}\longrightarrow 0 \quad \text{as} \quad t\to\infty,
	\end{align}
	since $V_t$ converges in distribution to a finite random variable. Moreover $\int_{(0,t]} e^{\xi_{s-}}d\eta_s$ is conditionally independent of $V_0$ given $J_0$ and thus its limit in probability $-V_0$ is also conditionally independent of $V_0$ given $J_0$. Hence $V_0$ is constant given $J_0$, which implies $V_t=c_{J_t}$ for a sequence of real constants $\{c_j, j\in S\}$, i.e. b).\\
	For the converse assume first that a) holds. Then by \cite[Lemma 4.10]{BEHME+SIDERIS_ExpFuncMAP2020} we have that $\lim_{t\in\mathbb{T}_j,t\to\infty}\xi_t=\infty$ $\PP_j$-a.s. for all $j\in S$ and hence $e^{-\xi_t} V_0 \inPpi 0$ as $t\to \infty$. Further, by Proposition  \ref{ForwardConvergence} the integral $\mathfrak{F}_{(\xi,\eta)}(t)$ converges in distribution as $t\to\infty$ with the limit being equal in law to $\mathfrak{E}_{(-\xi^*,-L^*)}^{\infty}$ under $\PP_\pi^\ast$. 
	Thus by \eqref{MAPGOUexplicitxi} and Slutsky's theorem we obtain that also $V_t$ converges in distribution as $t\to\infty$ with the limit being equal in law to $\mathfrak{E}_{(-\xi^*,-L^*)}^{\infty}$ under $\PP_\pi^\ast$. Now choose $V_0$ under $\PP_\pi$ conditionally independent of $((\xi_t,\eta_t),J_t)_{t\geq 0}$ given $J_0$ and equal in law to $\mathfrak{E}_{(-\xi^*,-L^*)}^{\infty}$ under $\PP_\pi^\ast$. From Section \ref{S2} we know that	
	$$V_t=A_{t-h,t} V_{t-h} + B_{t-h,t}, \quad  \text{a.s. for all } 0\leq h\leq t, $$
	with $(A_{s,t},B_{s,t})_{0\leq s\leq t}$ conditionally stationary by (MMGOU3). Thus letting $h$ fixed and $t\to \infty$ this yields under $\PP_\pi$
	$$V_0\overset{d}= A_{0,h} V_0 + B_{0,h} = V_h$$
	and hence $V_t\overset{d}=V_0$ under $\PP_\pi$ for all $t\geq 0$. As $(V_t, J_t)_{t\geq 0}$ is a time-homogeneous Markov process this implies strict stationarity of $(V_t)_{t\geq 0}$ under $\PP_\pi$.\\
	In case of b) stationarity is immediate.
\end{proof}

		\begin{ex} \label{ex-statMMOU} Consider the MMOU process as in Example \ref{ex-MMOU}, i.e. the MMGOU process $(V_t)_{t\geq 0}$ driven by $((\xi,\eta),J)$ with $(\xi,\eta)$ specified in \eqref{eq-xietaMMOU}. For this process stationarity has been considered already in  \cite{ZHANG+WANG_StationarydistributionOUP2stateMarkovswitching2017} for $|S|=2$, and in \cite{LINDSKOG+MAJUMDER_ExactLongtimebehaviourMAP2019} for countable $S$. The very special setting chosen in \cite{ZHANG+WANG_StationarydistributionOUP2stateMarkovswitching2017} allows the authors to describe the stationary distribution of the MMOU process via Fourier methods. In \cite{LINDSKOG+MAJUMDER_ExactLongtimebehaviourMAP2019} the stationary distribution of the MMOU process (and even of a "L\'evy-driven MMOU process") is determined to be the distribution of $\sum_{j\in S} \pi_j S_j$ for some random variables $S_j$ that are described by exponential integrals over the excursions of $J$. The method applied in  \cite{LINDSKOG+MAJUMDER_ExactLongtimebehaviourMAP2019} is based on focusing on return times of $(J_t)_{t\geq 0}$ which leads to a random recurrence equation with i.i.d. coefficients for $(V_t)_{t\geq 0}$. This approach could have been extended to our setting as well. However the representation of the stationary distribution obtained in \cite[Thm. 1 and Remark 3.3]{LINDSKOG+MAJUMDER_ExactLongtimebehaviourMAP2019} seems less intuitive to us compared to the representation as exponential functional. \\
			Indeed, applying the above Theorem \ref{StationarityMAPGOU} on the MMOU process, we immediately observe that option b) is impossible as long as $\sigma^2_\eta(j)\neq 0$ for some $j\in S$. Thus the process $(V_t)_{t\geq 0}$ can not be degenerate under this standard condition. Hence  $V_0$ can be chosen conditionally independent of $((\xi_t, \eta_t),J_t)_{t\geq 0}$ given $J_0$ and such that $(V_t)_{t\geq 0}$ is strictly stationary if and only if the exponential functional $\frace_{(-\xi^\ast,-L^\ast)}(t)$ converges in $\PP_\pi^\ast$-probability. However, as jumps of $J$ do not induce additional jumps of $(\xi,\eta)$ in the present situation and $\xi$ and $\eta$ are independent, we observe from \eqref{eq-ULviaxieta} that $L_t=\eta_t$, $t\geq 0$. Thus the MAP $((-\xi^\ast_t, -L^\ast_t),J_t)_{t\geq 0}$ can be represented as 
		$$ \begin{pmatrix}-\xi_t^\ast \\ -L_t^\ast \end{pmatrix}=\begin{pmatrix} \int_{(0,t]}\gamma_\xi(J^\ast_s)ds \\ \int_{(0,t]}\gamma_\eta(J^\ast_s)ds + \int_{(0,t]} \sigma_\eta^2(J_s^\ast) dB^\ast_s \end{pmatrix} $$
		for some standard Brownian motion $(B_t^\ast)_{t\geq 0}$.\\ A simple sufficient condition for existence of a stationary solution is given by
		$$0<\sum_{j\in S} \pi_j \gamma_\xi(j)<\infty, \quad \text{and} \quad \sup_{j\in S}(|\gamma_\eta(j)|+\sigma_\eta^2(j))<\infty,$$
		see \cite[Prop. 5.2]{BEHME+SIDERIS_ExpFuncMAP2020}, while the necessary and sufficient condition can be derived from Prop. \ref{ForwardConvergence} or \cite[Thm. 4.1]{BEHME+SIDERIS_ExpFuncMAP2020}. 
	The stationary distribution of the MMOU process in this case is given by the distribution of
		$$\frace_{(-\xi^*,-L^*)}^\infty = \int_{(0,\infty)}e^{-\int_{(0,t]}\gamma_\xi(J^\ast_s)ds} \gamma_\eta(J^\ast_t)dt + \int_{(0,\infty)} e^{-\int_{(0,t]}\gamma_\xi(J^\ast_s)ds}\sigma_\eta^2(J_t^\ast) dB^\ast_t.$$
			\end{ex}\bigskip
		
We end this section with a short discussion on \emph{non-causal} MMGOU processes, i.e. MMGOU processes $(V_t)_{t\geq 0}$ driven by a bivariate MAP $((\xi,\eta),J)$, where $V_0$ is not assumed to be conditionally independent of $((\xi_t,\eta_t),J_t)_{t\geq 0}$ given $J_0$. In the L\'evy setting such solutions have been considered in \cite{BEHME+LINDNER+MALLER_StationarySolutionsSDEGOU} and as long as $S$ is assumed to be finite, it is not surprising that a similar statement as \cite[Thm. 2.1]{BEHME+LINDNER+MALLER_StationarySolutionsSDEGOU} can be shown as follows.

\begin{Prop}
		Let $(V_t)_{t\geq 0}$ be the MMGOU process driven by the bivariate $\FF$-MAP $((\xi,\eta),J)$ and recall that  $((\xi,L),J)$ with $(L_t)_{t\geq 0}$ defined in \eqref{eq-ULviaxieta} is an $\FF$-MAP. Assume that $(V_t)_{t\geq 0}$ may be non-causal and additionally assume $|S|<\infty$. \\
	If $(V_t)_{t\geq 0}$ is strictly stationary, then either a) or b) from Theorem \ref{StationarityMAPGOU} hold, or,
	\begin{itemize}
		\item[c)] The exponential integral $\mathfrak{E}_{(-\xi,\eta)}(t)$ converges $\PP_\pi$-a.s. to some proper random variable for $t\to\infty$; equivalently 
		$\lim_{t\in\mathbb{T}_j,t\to\infty}\xi_t=-\infty$ $\PP_j$-a.s. and $I_{(-\xi,\eta)}^j<\infty$ hold for some $j\in S$.
	\end{itemize}
Conversely, if a), b), or c) holds, then there exists a finite random variable $V_0$, such that $(V_t)_{t\geq 0}$ started with $V_0$ is strictly stationary. In case of a) the stationary process is obtained by choosing $V_0$ conditionally independent of $((\xi_t,\eta_t),J_t)_{t\geq 0}$ given $J_0$ and equal in law to $\mathfrak{E}_{(-\xi^*,-L^*)}^{\infty}$ under $\PP_\pi^\ast$. In case of c) the stationary process is obtained by choosing $V_0=\int_{(0,\infty)} e^{\xi_{s-}}d\eta_s \in \cF_\infty$  a.s., and the stationary process is given by 
$V_t= -e^{-\xi_t} \int_{(t,\infty)} e^{\xi_{s-}}d\eta_s$ a.s. for all $t\geq 0$.
\end{Prop}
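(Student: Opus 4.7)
The plan is to extend the proof of Theorem \ref{StationarityMAPGOU} by exploiting the fact that, without causality, $V_0$ may be constructed from the future path of the driver, which makes available a third stationary regime in which $\xi_t\to -\infty$. Throughout I work from the explicit representation
\[
V_t = e^{-\xi_t} V_0 + \mathfrak{F}_{(\xi,\eta)}(t), \qquad t\geq 0,
\]
given by \eqref{MAPGOUexplicitxi}, and split the forward direction by the long-time behaviour of $\xi_t$. Since $|S|<\infty$ and $J$ is ergodic, the Chung--Fuchs trichotomy applied to the conflated L\'evy process $\hat\xi^j$ from \eqref{embeddedLevystructureMAP} gives that, $\PP_\pi$-a.s., either $\xi_t\to+\infty$, $\xi_t\to-\infty$, or $\xi_t$ oscillates.

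Assume $(V_t)_{t\geq 0}$ is strictly stationary under $\PP_\pi$. If $\xi_t\to+\infty$, then $e^{-\xi_t}V_0\topr 0$ and Slutsky's theorem combined with Proposition \ref{ForwardConvergence} forces convergence of $\mathfrak{F}_{(\xi,\eta)}(t)$, giving a). If $\xi_t$ oscillates or is degenerate in the sense of Proposition \ref{ForwardConvergence}, the argument from the proof of Theorem \ref{StationarityMAPGOU} goes through unchanged and yields b). The new regime $\xi_t\to -\infty$ is treated by multiplying through by $e^{\xi_t}$ to obtain
\[
e^{\xi_t} V_t = V_0 + \mathfrak{E}_{(-\xi,\eta)}(t),
\]
whose left-hand side tends to $0$ in $\PP_\pi$-probability, since $V_t\inV V_0$ is a.s.\ finite and $e^{\xi_t}\to 0$ $\PP_\pi$-a.s., hence $\mathfrak{E}_{(-\xi,\eta)}(t)\topr -V_0$. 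Applying \cite[Thm.~4.1]{BEHME+SIDERIS_ExpFuncMAP2020} to the MAP $((-\xi,\eta),J)$ and using the finite-$S$ upgrade of probability- to almost sure convergence noted in the Remarks following Theorem \ref{StationarityMAPGOU}, this is equivalent, up to degeneracy (which collapses to b)), to the conditions in c).

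For the converse, cases a) and b) are handled exactly as in Theorem \ref{StationarityMAPGOU}. In case c) the integral $\int_{(0,\infty)} e^{\xi_{s-}} d\eta_s$ exists $\PP_\pi$-a.s.\ and is $\cF_\infty$-measurable; setting $V_0:=-\int_{(0,\infty)} e^{\xi_{s-}} d\eta_s$ and inserting into \eqref{MAPGOUexplicitxi} yields
\[
V_t = -e^{-\xi_t}\int_{(t,\infty)} e^{\xi_{s-}} d\eta_s, \qquad t\geq 0, \quad \PP_\pi\text{-a.s.}
\]
Strict stationarity then follows from the shift invariance of the MAP $((\xi,\eta),J)$ under $\PP_\pi$: substituting $u=t+v$ in the displayed formula and writing $\tilde\xi_v:=\xi_{t+v}-\xi_t$, $\tilde\eta_v:=\eta_{t+v}-\eta_t$, one obtains $V_{t+v} = -e^{-\tilde\xi_v}\int_{(v,\infty)} e^{\tilde\xi_{u-}}\, d\tilde\eta_u$, and $((\tilde\xi,\tilde\eta),(J_{t+\cdot}))$ has the same law as $((\xi,\eta),J)$ under $\PP_\pi$, so $(V_{t+v})_{v\geq 0}\iv (V_v)_{v\geq 0}$.

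The main obstacle will be the $\xi_t\to -\infty$ case of the forward direction, where one must translate $\mathfrak{E}_{(-\xi,\eta)}(t)\topr -V_0$ into the intrinsic characterisation in c), that is $\xi_t\to -\infty$ $\PP_j$-a.s.\ together with $I^j_{(-\xi,\eta)}<\infty$. The assumption $|S|<\infty$ is essential and is used precisely at this step, via the equivalence between convergence in $\PP_j$-probability and $\PP_\pi$-a.s.\ convergence of MAP exponential integrals recorded in the Remarks after Theorem \ref{StationarityMAPGOU}; without it, condition c) would have to be phrased merely in terms of convergence in probability.
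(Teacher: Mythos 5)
Your converse direction and your treatment of the new regime $\xi_t\to-\infty$ are essentially the paper's argument, but there is a genuine gap in the forward direction, namely in the oscillating case. You claim that if $\xi$ oscillates (or the integral is degenerate) ``the argument from the proof of Theorem \ref{StationarityMAPGOU} goes through unchanged and yields b)''. It does not go through unchanged: in the relevant sub-case of that proof ($\lim_{t\in\TT_j,t\to\infty}\xi_t<\infty$ and $|\frace_{(-\xi^*,-L^*)}(t)|\overset{\PP_\pi^\ast}\to\infty$), the causal argument first derives $\xi_t\to-\infty$ in $\PP_\pi$-probability and $e^{\xi_t}V_t\to 0$ as in \eqref{stopherenoncausal}, and then concludes b) \emph{only} by invoking the conditional independence of $V_0$ from $\int_{(0,t]}e^{\xi_{s-}}d\eta_s$ given $J_0$, which forces $V_0$ to be constant given $J_0$. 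That conditional independence is precisely the causality assumption dropped in this proposition, so your appeal to the old argument is circular at the one point where the non-causal case differs from the causal one. In particular, your trichotomy-based split handles the divergence case only when $\xi_t\to-\infty$ holds almost surely, leaving the ``$\xi$ oscillates, integral non-degenerate'' sub-case without a valid proof of b) (and indeed b) is not what the correct argument delivers there).

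The paper avoids this by not splitting according to the pathwise behaviour of $\xi$ at all: it keeps the causal proof only up to \eqref{stopherenoncausal}, and then, for the entire divergence case (which contains your oscillation sub-case), reads off from \eqref{stopherenoncausal} that $\frace_{(-\xi,\eta)}(t)$ must converge in $\PP_\pi$-probability to $-V_0$; if this integral is degenerate one is back in b), and otherwise $|S|<\infty$ upgrades convergence in probability to $\PP_\pi$-a.s.\ convergence, which is exactly c). If you want to keep your trichotomy, you would have to run this same derivation in the oscillating sub-case as well (tightness of $V_t$ plus divergence of $|\fracef_{(\xi,\eta)}(t)|$ gives $\xi_t\inPpi-\infty$ even without a.s.\ convergence, and then the above argument applies; one can then observe that the resulting condition c) contradicts oscillation, so the sub-case is in fact empty unless degenerate) --- but this step has to be carried out, and it is precisely where the distinction between the causal and the non-causal setting, and the finiteness of $S$, enter. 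The rest of your proposal, including the stationarity argument for the non-causal solution $V_t=-e^{-\xi_t}\int_{(t,\infty)}e^{\xi_{s-}}d\eta_s$ via shift invariance of the MAP under $\PP_\pi$, is correct and in line with the paper's proof.
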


\begin{proof}
	We follow the proof of Theorem \ref{StationarityMAPGOU} up to Equation \eqref{stopherenoncausal} and note that we have not made use of the conditional independence assumption when proving that a) or b) may occur in the treated cases. \\
	It remains to reconsider the case that $\lim_{t\in\mathbb{T}_j,t\to\infty}\xi_t<\infty$ $\PP_j$-a.s. for all $j\in S$ and $|\frace_{(-\xi^*,-L^*)}(t)|\inPpi\infty$ as $t\to\infty$. As we already argued, $\xi_t\inPpi - \infty$ as $t\to\infty$ and \eqref{stopherenoncausal} holds. From \eqref{stopherenoncausal} we see immediately that stationarity implies that we have to chose $V_0$ as limit in $\PP_\pi$-probability of $\int_{(0,t]}e^{\xi_{s-}}d\eta_s = \frace_{(-\xi,\eta)}(t)$, and in particular $\frace_{(-\xi,\eta)}(t)$ has to converge in $\PP_\pi$-probability. As seen in the proof of Theorem \ref{StationarityMAPGOU} a degenerate behaviour of $\frace_{(-\xi,\eta)}(t)$ is possible in this case, leading to b) again. Thus assume the exponential integral $\frace_{(-\xi,\eta)}(t)$ is not degenerate. 
	Then, as $|S|<\infty$, by \cite[Rem. 4.2]{BEHME+SIDERIS_ExpFuncMAP2020} convergence in $\PP_\pi$-probability is equivalent to $\PP_\pi$-almost sure convergence of $\frace_{(-\xi,\eta)}$ which yields c). \\
	For the converse, assuming a) or b) we may again follow the proof of Theorem \ref{StationarityMAPGOU}. Assuming c) and setting $V_0=-\int_{(0,\infty)} e^{\xi_{s-}}d\eta_s$ $\PP_\pi$-a.s. clearly implies $V_t=\int_{(t,\infty)} e^{\xi_{s-}-\xi_t}d\eta_s$ $\PP_\pi$-a.s. This process is, conditionally given $J$, strictly stationary, since $(\xi,\eta)$ has conditionally stationary increments given $J$ by Lemma \ref{lem-MAPproperty}.
\end{proof}

\begin{remark}
Unfortunately, assuming $S$ to be countably infinite in the above theorem, we cannot determine the explicit form of the non-causal stationary distribution, since convergence in probability does not imply convergence in an almost sure sense of the exponential functional. However, the non-causal stationary solution that we obtained still exists for $|S|=\infty$ as long as the exponential functional $\frace_{(-\xi,\eta)}(t)$ converges almost surely.
\end{remark}

\begin{remark}
As it has been discussed also in \cite{BEHME+LINDNER+MALLER_StationarySolutionsSDEGOU}, it is in general not clear, whether the non-causal process $(V_t)_{t\geq 0}$ still satisfies the SDE \eqref{MAPGOUSDE}. This is due to the fact that $(U,L)$ may not be a semimartingale with respect to the enlarged filtration $\GG$, which is the smallest filtration containing $\FF$, satisfying the usual hypothesis, and such that $V_0$ is $\cG_0$-measurable. 
However, following the lines of \cite[Thm. 5.1]{BEHME+LINDNER+MALLER_StationarySolutionsSDEGOU} or \cite[Thm. 3.6]{Jacodfiltration}, one can show that whenever the distribution of $V_0$ is the Dirac measure or absolutely continuous, then all $\FF$-semimartingales are $\GG$-semimartingales, and in particular $(V_t)_{t\geq 0}$ satisfies \eqref{MAPGOUSDE}. \\
A more thorough study of the semimartingale property of $(U,L)$ in the enlarged filtration $\GG$, or of absolute continuity of exponential functionals of MAPs, goes beyond the scope of this paper and will be left open for future research.
\end{remark}

\section{The Markov-modulated GOU process in risk theory}\label{S4}
\setcounter{equation}{0}

The classical (perturbed) \emph{Cram\'er-Lundberg risk process} describes the capital of an insurance company at time $t$ by
$$R_t=u+pt-\sum_{i=1}^{N_{L,t}} S_{L,i}+B_{L,t},\quad t\geq 0,$$
where $u>0$ is the initial capital, $p>0$ a constant premium rate, $(N_{L,t})_{t\geq 0}$ a Poisson process with rate $\lambda_L>0$ counting the number of claims up to a given time $t$, and $\{S_{L,i}, i\in\NN\}$ is an i.i.d. sequence of positive random variables, independent of $(N_{L,t})_{t\geq 0}$, that represent the claim sizes. The perturbation $(B_{L,t})_{t\geq 0}$ is a Brownian motion with variance $\sigma_L^2\geq 0$ and it is supposed to be independent of the claim process. Hence in this model
\begin{equation}\label{paulsen1}R_t=u+L_t, \quad t\geq 0,
\end{equation}
for the surplus generating spectrally negative L\'evy process $(L_t)_{t\geq 0}$ given by
\begin{equation} 
\label{paulsenL} L_t=pt- \sum_{i=1}^{N_{L,t}} S_{L,i}+B_{L,t}, \quad t\geq 0.\end{equation}
In \cite{PAULSEN_RisksStochasticEnvironment1992, GjessingPaulsen} Paulsen and Gjessing introduced and studied a combination of the Cram\'er-Lundberg model with an investment model. In this combined model it is assumed that the insurance company invests its capital in a market whose evolution can be described by the SDE
\begin{equation}\label{paulsen2}
dZ_t=Z_{t-}dU_t, \quad t\geq 0,
\end{equation}
where the investment returns are generated by another L\'evy process $(U_t)_{t\geq 0}$, that is assumed to be independent of the surplus generating process $(L_t)_{t\geq 0}$, and given by
\begin{equation} \label{paulsenU} U_t= rt+ \sum_{i=1}^{N_{U,t}} S_{U,i}+B_{U,t}, \quad t\geq 0. \end{equation}
Here $r\in\RR$ is a constant interest rate, $(\sum_{i=1}^{N_{U,t}} S_{U,i})_{t\geq 0}$ is a compound Poisson process with rate $\lambda_{U}\geq 0$, and $(B_{U,t})_{t\geq 0}$ is a Brownian motion with variance $\sigma_U^2\geq 0$, independent of $(\sum_{i=1}^{N_{U,t}}
S_{U,i})_{t\geq 0}$. Note that the SDE \eqref{paulsen2} is just the SDE \eqref{StochExpSDE} of the stochastic exponential and hence we have to assume $S_{U,1}>-1$ a.s. to prevent $Z_t$ from flipping its sign or dropping to zero due to a jump of $U$, which is a natural assumption.\\
Combining \eqref{paulsen1} and \eqref{paulsen2} yields  \emph{Paulsen's risk process} with stochastic investment described by the SDE \eqref{GOUSDELevy} of the GOU process with independent L\'evy processes $U$ and $L$. Since it is assumed that $U$ has no jumps less or equal to $-1$, this risk process is a generalized Ornstein-Uhlenbeck process \eqref{GOULevyexplizit} driven by another bivariate L\'evy process $(\xi,\eta)$.

In the following, we will apply the MMGOU process introduced in Section \ref{S1} above, to study a risk process with stochastic returns on the investment in a Markov-modulated environment, i.e. we consider a Markov-switching variation of  Paulsen's risk process described above.

 We start by defining a bivariate $\FF$-MAP $((U_t,L_t),J_t)_{t\geq 0}$ whose Markovian component $(J_t)_{t\geq 0}$ is ergodic and defined on a finite state space $S$. Whenever $J_t=j$ on some time interval $(t_1,t_2)$, the additive component $(U_t, L_t)_{t_1<t<t_2}$ is supposed to behave in law as a L\'evy process $(U^j,L^j)$ with independent components of the forms \eqref{paulsenU} and \eqref{paulsenL}, respectively. The parameters of these switching L\'evy processes are $p^j>0$, $\lambda_{L^j}\geq 0$, $\sigma_{L^j}^2\geq 0$, $r^j\in \RR$, $\lambda_{U^j}\geq 0$, and $\sigma_{U^j}^2\geq 0$, respectively, the jump sizes are denoted by $\{S_{L^j,i}, i\in \NN\}$, and $\{S_{U^j,i}, i\in \NN\}$. Moreover, whenever $J$ jumps, say at time $T_n$ from $i$ to $j$, this jump induces a simultaneous additional jump $(\Phi_{U,n}^{ij}, \Phi_{L,n}^{ij})$ in $(U,L)$. These jumps can be interpreted as market shocks associated to the regime switches. Again, to avoid an unnatural flipping of the sign or dropping to zero of the included investment process, we have to assume that $\Phi_{U,n}^{ij}>-1$ a.s. Note that due to the additional jumps induced by $J$, the components $U$ and $L$ of the additive component are in general not conditionally independent given $J$, see also Remark \ref{rem-independencepurejumps}.

The resulting risk process $(V_t)_{t\geq 0}$ is supposed to be governed by the SDE \eqref{MAPGOUSDE} of the MMGOU process. From the above construction it follows immediately (see also Remark \ref{rem-naive}), that as long as $J_t=j$ on some time interval $(t_1,t_2)$, this risk process behaves in law as Paulsen's risk process described above, driven by the surplus generating process $L^j$ and the return on investments generating process $U^j$.
Moreover, from Theorem \ref{PropMAPGOUSDE} it follows that the risk process  $(V_t)_{t\geq 0}$ can be represented by \eqref{MAPGOUexplicitxi} for another bivariate $\FF$-MAP $((\xi,\eta),J)$. In particular, as long as $J_t=j$ on some time interval $(t_1,t_2)$, the additive component $(\xi,\eta)$ behaves in law as the bivariate L\'evy process
\begin{align}
\begin{pmatrix} \xi_t^j \\ \eta^j_t \end{pmatrix}
&= \begin{pmatrix} -U_t^j  + \frac12 \sigma_{U^j}^2 t + \sum_{0< s\leq t}(\Delta U^j_s - \log (1+\Delta U^j_s)) \\ L_t^j \end{pmatrix} \label{paulsenxieta1}\\
  &= \begin{pmatrix} \big(\frac12 \sigma_{U^j}^2 -r^j\big) t  - \sum_{i=1}^{N_{U^j,t}}  \log (1+S_{U^j,i})-  B_{U^j,t} \\ p^jt- \sum_{i=1}^{N_{L^j,t}} S_{L^j,i}+B_{L^j,t}
 \end{pmatrix} ,\quad t\geq 0,\label{paulsenxieta2}
\end{align}
which follows by straightforward computations from $e^\xi=\cE(U)$ together with \eqref{exponentialexplicit}, \eqref{eq-etaviaUL}, and the special forms \eqref{paulsenU} and \eqref{paulsenL} chosen for the independent processes $U^j$ and $L^j$, respectively. The additional jumps of $(\xi,\eta)$ induced by jumps of $J$ are likewise computed as
\begin{equation}
(\Phi_{\xi,n}^{ij}, \Phi_{\eta,n}^{ij})\mathds{1}_{\lbrace J_{T_{n-1}}=i,J_{T_{n}}=j \rbrace} = (\Delta \xi_{T_n},\Delta \eta_{T_n}) = \left(-\log(1+\Phi_{U,n}^{ij}), \Phi_{L,n}^{ij}(1+\Phi_{U,n}^{ij})^{-1} \right)\mathds{1}_{\lbrace J_{T_{n-1}}=i,J_{T_{n}}=j \rbrace}, \label{paulsenxieta3}
\end{equation}
 for all $i,j\in S$, $n\in \NN$.

In the sequel we will denote the risk process $(V_t)_{t\geq 0}$ by $(V_t^u)_{t\geq 0}$, whenever the starting capital is chosen as $V_0=u>0$ a.s. The \emph{ruin probability} of the risk process with initial capital $u$ and initial regime $j\in S$ is given by
\begin{equation} \label{eq-ruinprob} \Psi_j(u):=\PP_j(V_t^u\leq 0 \; \text{for some }t\geq 0) = \PP_j(\tau_u<\infty), \end{equation}
for the \emph{time of ruin} 
$$\tau_u:= \inf\{t\geq 0: V_t^u\leq 0\},$$
which is a stopping time with respect to $\FF$, cf. \cite[Thm. I.4]{PROTTER_StochIntandSDE}. Moreover it is clear from the representation \eqref{MAPGOUexplicitxi}, that
\begin{equation} \label{timeofruin} \tau_u=\inf\Big\{t\geq 0: \int_{(0,t]} e^{\xi_{s-}}d\eta_s \leq -u\Big\} = \inf\big\{t\geq 0: \frace_{(-\xi,\eta)}(t)\leq u \big\}.\end{equation}

We start with a simple lemma in which we show that ruin in our model happens with positive probability as soon as we rule out some practically irrelevant cases.

\begin{lem}
 In the above decribed model 
 $$\PP_j(\tau_u<\infty)>0 \quad \text{for all }j\in S,$$
 as long as there exists some $j\in S$ such that $L^j$ is not a subordinator, i.e. such that $\sigma_{L^j}^2>0$ or $\lambda_{L^j}>0$.
\end{lem}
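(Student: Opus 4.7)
The plan is to reduce the problem in two steps to the classical Paulsen L\'evy model and then to establish positivity of the ruin probability there by an explicit sample-path argument. Fix $j^*\in S$ with $L^{j^*}$ not a subordinator, i.e.\ $\sigma_{L^{j^*}}^2>0$ or $\lambda_{L^{j^*}}>0$. Since $J$ is ergodic on the finite state space $S$, the $\FF$-stopping time $T:=\inf\{t\geq 0:J_t=j^*\}$ is $\PP_j$-a.s.\ finite for every $j\in S$. If $\PP_j(\tau_u\leq T)>0$, we are done; otherwise $V_T>0$ a.s., and since $(V,J)$ is a strong Markov process (as the unique solution of the SDE \eqref{MAPGOUSDE} driven by the $\FF$-MAP $((U,L),J)$), the strong Markov property at $T$ yields
\[
\PP_j(\tau_u<\infty)\;\geq\;\EE_j\bigl[\Psi_{j^*}(V_T)\,;\,V_T>0\bigr],
\]
which reduces the statement to showing $\Psi_{j^*}(v)>0$ for every $v>0$.

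Under $\PP_{j^*}$, the first jump time $T_1^J$ of $J$ is exponentially distributed with rate $-q_{j^*j^*}>0$ and independent of the switching L\'evy process $(\xi^{j^*},\eta^{j^*})$ from \eqref{paulsenxieta2}. For any $h>0$, on the event $\{T_1^J>h\}$, which has positive probability $e^{q_{j^*j^*}h}$, the MAP $((\xi_t,\eta_t),J_t)_{t\in[0,h]}$ coincides in law with $(\xi^{j^*}_t,\eta^{j^*}_t,j^*)_{t\in[0,h]}$, so the MMGOU process from initial capital $v$ agrees there with the classical Paulsen process
\[
\widetilde V_t\;=\;e^{-\xi^{j^*}_t}\Bigl(v+\int_{(0,t]}e^{\xi^{j^*}_{s-}}\,d\eta^{j^*}_s\Bigr),\qquad t\in[0,h].
\]
Writing $\widetilde\tau_v$ for its ruin time, it suffices to prove $\PP(\widetilde\tau_v<\infty)>0$.

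For the last step I would distinguish two subcases. If $\sigma_{L^{j^*}}^2>0$, I would intersect with the positive-probability event that neither $U^{j^*}$ nor $L^{j^*}$ has a compound-Poisson jump on $[0,h]$ and that $\sup_{t\leq h}|B_{U^{j^*},t}|\leq 1$, on which $\xi^{j^*}$ is continuous and uniformly bounded on $[0,h]$ by a deterministic constant $K$. The drift contribution to $\widetilde V_t$ is then pathwise bounded, while the Brownian contribution $\sigma_{L^{j^*}}\int_{(0,h]}e^{\xi^{j^*}_{s-}}\,dB_{L^{j^*},s}$ is, conditionally on $\xi^{j^*}$, a centred Gaussian whose variance is bounded below by $e^{-2K}\sigma_{L^{j^*}}^2 h>0$ (using independence of $B_{L^{j^*}}$ and $\xi^{j^*}$), hence takes sufficiently negative values with positive conditional probability, and Fubini then yields $\PP(\widetilde\tau_v\leq h)>0$. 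If instead $\sigma_{L^{j^*}}^2=0$ and $\lambda_{L^{j^*}}>0$, then $L^{j^*}$ is a L\'evy process with positive downward jump activity and, not being a subordinator, standard L\'evy fluctuation theory (as in \cite{PAULSEN_RisksStochasticEnvironment1992}) gives $\PP(\inf_{t\geq 0}L^{j^*}_t\leq -M)>0$ for every $M>0$. Combining this with a pathwise lower bound $e^{\xi^{j^*}_{s-}}\geq e^{-K}$ on the positive-probability event $\{\sup_{s\leq T_0}|\xi^{j^*}_s|\leq K\}$ (for $T_0$ fixed and $K$ large) and using independence of $\xi^{j^*}$ and $L^{j^*}$, one concludes that $\int_{(0,t]}e^{\xi^{j^*}_{s-}}\,dL^{j^*}_s\leq -v$ with positive probability for some $t\leq T_0$.

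The main obstacle will be this pure-jump subcase: with no Brownian component in $L^{j^*}$ the clean Gaussian trick is unavailable, and one must combine L\'evy fluctuation results for $L^{j^*}$ with a quantitative boundedness event for $\xi^{j^*}$, taking care that a single large jump of $L^{j^*}$ may not suffice and that several jumps may need to be accumulated over a fixed time horizon while keeping $\xi^{j^*}$ uniformly bounded on that horizon.
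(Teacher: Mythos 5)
Your argument is correct in outline, but it takes a genuinely different route from the paper. The paper disposes of a non-subordinator state $j$ in one stroke by citing \cite{BLRR}: since $p^j>0$ and $\eta^j=L^j$ is not a subordinator, the exponential functional $\int_{(0,\sigma]}e^{\xi^j_{s-}}d\eta^j_s$ killed at an independent exponential time $\sigma$ is unbounded from below, and taking $\sigma=\tauex_1(j)\sim\Exp(-q_{jj})$ gives ruin with positive $\PP_j$-probability already before the first regime switch — no case distinction between a Brownian and a pure-jump claim component is needed. For a starting state whose own $L^j$ is a subordinator, the paper performs the same reduction you do, but by decomposing $\int_{(0,\tauex_1(k)]}e^{\xi_{s-}}d\eta_s$ at the first return time $\taure_1(k)$ to a non-subordinator state $k$ and using the Markov additivity of $((\xi,\eta),J)$, rather than invoking the strong Markov property of the pair $(V,J)$ itself; your parenthetical appeal to the latter is the one step you would still need to justify (it does follow from \c{C}inlar's strong Markov property of the MAP together with the functional equation $V_{T+t}=A_{T,T+t}V_T+B_{T,T+t}$, or you can sidestep it entirely by copying the paper's integral decomposition). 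What your approach buys is self-containedness: the conditional-Gaussian estimate when $\sigma^2_{L^{j^*}}>0$ and the jump-accumulation argument on a bounded-$\xi^{j^*}$ event when $\sigma^2_{L^{j^*}}=0,\ \lambda_{L^{j^*}}>0$ replace the external unboundedness result and are elementary; just make sure you first fix a finite horizon $h$ with $\PP(\widetilde\tau_v\le h)>0$ before intersecting with $\{T_1^J>h\}$, and in the pure-jump case choose $M$ of order $e^{K}(v+p^{j^*}e^{K}T_0)$ so that the $e^{-K}$-weighted accumulated claims beat the $e^{K}$-weighted premium drift — exactly the point you flag yourself. What the paper's route buys is brevity and uniformity across all non-subordinator cases, at the price of leaning on the killed-exponential-functional result of \cite{BLRR}.
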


\begin{proof}
Assume first that $j\in S$ is such that $L^j$ is not a subordinator. Then clearly by \eqref{paulsenxieta1} also $\eta^j$ is not a subordinator. As $p^j>0$, by \cite[Thm. 4.1]{BLRR} this implies that $\int_{(0,\sigma]} e^{\xi_{s-}^j} d\eta_s^j$ for an exponentially distribution random time $\sigma$, independent of $(\xi_t^j,\eta_t^j)_{t\geq 0}$, can not be bounded from below. Because of $\int_{(0,t]} e^{\xi_{s-}}d\eta_s \overset{d}= \int_{(0,t]} e^{\xi_{s-}^j}d\eta_s^j$ under $\PP_j$ for all $t< \tauex_1(j)$ this implies 
$$\PP_j \left( \int_{(0,\tauex_1(j)]} e^{\xi_{s-}}d\eta_s \leq -u \right) >0 \quad \text{for all }u>0,$$
with $\tauex_1(j) \sim \text{Exp}(-q_{jj})$ as defined in Section \ref{S3a}. In particular, ruin happens with positive $\PP_j$-probability, even before the first exit time of $j$.\\
If $j$ is such that $L^j$ is a subordinator, then it follows from \cite[Thm. 4.1]{BLRR} that $\int_{(0,\tauex_1(j)]} e^{\xi_{s-}^j} d\eta_s^j$ is indeed bounded from below by $0$. However, 
by assumption there exists another state, say $k\in S$, such that $L^k$ is not a subordinator. As $J$ is assumed to be ergodic, it will reach this state $k\in S$ in finite time. Since 
$$\int_{(0, \tauex_1(k)]}e^{\xi_{s-}} d\eta_s = \int_{(0,\taure_1(k)]} e^{\xi_{s-}} d\eta_s + e^{\xi_{\taure_1(k)}}\int_{(\taure_1(k), \tauex_1(k)]} e^{\xi_{s-}- \xi_{\taure_1(k)}} d\eta_s,$$
with the second integral being unbounded from below by the same reasoning as above, ruin will then happen with positive $\PP_j$-probability before exiting the state $k$.
\end{proof}

We end this paper with a formula for the ruin probability \eqref{eq-ruinprob} that generalizes \cite[Thm. 3.2]{PAULSEN_RisksStochasticEnvironment1992} to the Markov-modulated setting.

\begin{theorem}\label{thm-ruinMMpaulsen}
Assume that $\frace_{(-\xi,\eta)}(t)$ converges $\PP_\pi$-a.s. as $t\to \infty$ and denote the cdf of its limit given $J_0=j$ by 
$$H_j(x):=\PP_j\big(\frace_{(-\xi,\eta)}^\infty \leq x\big), \quad x\in \RR.$$	Then the ruin probability \eqref{eq-ruinprob} in the above described model is given by	
	$$\Psi_j(u) = \frac{H_j(-u)}{  \EE_j\left[ H_{J_{\tau_u}} (-V_{\tau_u}^u) \big| \tau_u<\infty \right] } \quad \text{for all }u>0, \;j\in S.$$
\end{theorem}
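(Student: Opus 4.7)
The plan is to decompose the exponential functional $\frace_{(-\xi,\eta)}^\infty$ at the ruin time $\tau_u$ and apply the strong Markov property of the bivariate $\FF$-MAP $((\xi,\eta),J)$ to rewrite the conditional tail beyond $\tau_u$ in terms of $H_{J_{\tau_u}}$. The starting point is the explicit representation \eqref{MAPGOUexplicitxi}, which gives $V_t^u = e^{-\xi_t}(u+\frace_{(-\xi,\eta)}(t))$ and thus, as already noted in \eqref{timeofruin}, the crucial identity
\begin{equation*}
\frace_{(-\xi,\eta)}(\tau_u) = e^{\xi_{\tau_u}} V_{\tau_u}^u - u \quad \text{on } \{\tau_u<\infty\}.
\end{equation*}

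Second, on $\{\tau_u<\infty\}$ I would split the limit of the exponential integral as
\begin{equation*}
\frace_{(-\xi,\eta)}^\infty = \frace_{(-\xi,\eta)}(\tau_u) + e^{\xi_{\tau_u}} Y, \qquad Y := \int_{(\tau_u,\infty)} e^{\xi_{s-}-\xi_{\tau_u}} d\eta_s,
\end{equation*}
using that $e^{\xi_{\tau_u}}$ is $\mathcal{F}_{\tau_u}$-measurable and hence may be pulled out of the post-$\tau_u$ integral. Substituting the identity from the first step yields $\frace_{(-\xi,\eta)}^\infty + u = e^{\xi_{\tau_u}}(V_{\tau_u}^u + Y)$, and because $e^{\xi_{\tau_u}}>0$ we obtain the equivalence
\begin{equation*}
\{\frace_{(-\xi,\eta)}^\infty \leq -u\}\cap \{\tau_u<\infty\} = \{Y\leq -V_{\tau_u}^u\}\cap \{\tau_u<\infty\}.
\end{equation*}

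Third, I would invoke the strong Markov property of the MAP $((\xi,\eta),J)$ at the $\FF$-stopping time $\tau_u$ (cf.\ \cite{CINLAR_MAP21972, ASMUSSEN_AppliedProbandQueues}): conditional on $\mathcal{F}_{\tau_u}$, the shifted process $((\xi_{\tau_u+\cdot}-\xi_{\tau_u},\eta_{\tau_u+\cdot}-\eta_{\tau_u}),J_{\tau_u+\cdot})$ has under $\PP_j$ the same law as $((\xi,\eta),J)$ under $\PP_{J_{\tau_u}}$, so that $Y$ is conditionally distributed as $\frace_{(-\xi,\eta)}^\infty$ under $\PP_{J_{\tau_u}}$. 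Since $V_{\tau_u}^u$ is $\mathcal{F}_{\tau_u}$-measurable, this gives
\begin{equation*}
\PP_j\big(Y \leq -V_{\tau_u}^u \,\big|\, \mathcal{F}_{\tau_u}\big)\mathds{1}_{\{\tau_u<\infty\}} = H_{J_{\tau_u}}(-V_{\tau_u}^u)\mathds{1}_{\{\tau_u<\infty\}}.
\end{equation*}
Combining with the second step and taking expectations under $\PP_j$, I arrive at
\begin{equation*}
H_j(-u) = \PP_j(\frace_{(-\xi,\eta)}^\infty \leq -u, \tau_u<\infty) = \Psi_j(u)\, \EE_j\big[H_{J_{\tau_u}}(-V_{\tau_u}^u)\,\big|\,\tau_u<\infty\big],
\end{equation*}
which rearranges to the claimed formula.

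The main technical obstacle lies in the first equality of the last display, i.e.\ in replacing $H_j(-u) = \PP_j(\frace_{(-\xi,\eta)}^\infty\leq -u)$ by the joint probability with $\{\tau_u<\infty\}$. The inclusion $\{\frace_{(-\xi,\eta)}^\infty < -u\}\subseteq \{\tau_u<\infty\}$ is immediate from the assumed almost sure convergence of $\frace_{(-\xi,\eta)}(t)$, but the boundary event $\{\frace_{(-\xi,\eta)}^\infty = -u,\, \tau_u = \infty\}$ needs to be shown to be null. In our risk model this is guaranteed as soon as $H_j$ is continuous at $-u$, which holds for instance whenever some $\sigma_{L^j}^2>0$ so that the exponential functional has a continuous distribution. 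A secondary point of care is the rigorous application of the strong Markov property to MAPs at $\tau_u$, including the possible additional jump of $(\xi,\eta)$ induced by a simultaneous jump of $J$ at $\tau_u$; this should be handled via the path decomposition \eqref{MAPpathdescription} together with Lemma \ref{lem-MAPproperty}.
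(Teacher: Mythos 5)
Your proposal is correct and follows essentially the same route as the paper's proof: you decompose $\frace_{(-\xi,\eta)}^\infty$ at $\tau_u$, divide by $e^{\xi_{\tau_u}}>0$ to identify $\{\frace_{(-\xi,\eta)}^\infty\le -u,\ \tau_u<\infty\}$ with $\{\,\int_{(\tau_u,\infty)}e^{\xi_{s-}-\xi_{\tau_u}}d\eta_s\le -V^u_{\tau_u},\ \tau_u<\infty\}$, and invoke the strong Markov property of the MAP at $\tau_u$ (the paper conditions on $J_{\tau_u}$ via an independent copy and the law of total probability rather than on $\cF_{\tau_u}$, which is the same argument). The boundary event $\{\frace_{(-\xi,\eta)}^\infty=-u,\ \tau_u=\infty\}$ that you single out is passed over silently in the paper, which simply asserts the inclusion $\{\frace_{(-\xi,\eta)}^\infty\le -u\}\subseteq\{\tau_u<\infty\}$; your remark that atomlessness of $H_j$ at $-u$ settles this point is, if anything, more careful than the published argument.
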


Before we present the proof of this theorem, let us discuss the assumption of almost sure convergence of $\frace_{(-\xi,\eta)}(t)$ in the context of our model:

\begin{remark} General  necessary and sufficient conditions for a.s. convergence of $\frace_{(-\xi,\eta)}(t)$ are given in \cite[Thm. 4.1]{BEHME+SIDERIS_ExpFuncMAP2020}. As the L\'evy processes $U^j$ and $L^j$ are assumed to be independent, by \cite[Prop. 4.7]{BEHME+SIDERIS_ExpFuncMAP2020} the exponential functional considered here can not be degenerate. This implies by \cite[Rem. 4.2]{BEHME+SIDERIS_ExpFuncMAP2020} that convergence in $\PP_j$-probability and a.s. convergence of  $\frace_{(-\xi,\eta)}(t)$ are equivalent. In particular, a necessary and sufficient condition for almost sure convergence is thus given by
$$ \lim_{t\in \TT_j, t\to \infty} \xi_t=-\infty \; \PP_j\text{-a.s. and } I^j_{(-\xi,\eta)}<\infty \quad \text{for some }j\in S,$$
with $I^j_{(-\xi,\eta)}$ as in \eqref{eq-nesssuffweak}. Still, as this condition is typically hard to check, note that by \cite[Prop. 5.2 and Prop. 5.7]{BEHME+SIDERIS_ExpFuncMAP2020}  a.s. convergence of $\frace_{(-\xi,\eta)}(t)$ also follows if the following slightly simpler conditions are satisfied:
\begin{enumerate}
	\item The long-term mean $\kappa_\xi$ of $\xi$ is well-defined and negative, i.e.
	\begin{align*}\kappa_\xi&= \sum_{j\in S} \pi_j \EE[\xi^j_1] + \sum_{\substack{(i,j)\in S\times S\\ i\neq j}} \pi_i q_{ij} \EE[\Phi_{\xi,1}^{ij}] \\
	&=\sum_{j\in S} \pi_j \left( \frac12 \sigma_{U^j}^2 - r^j - \lambda_{U^j} \EE[\log(1+S_{U^j,1})] \right) - \sum_{\substack{(i,j)\in S\times S\\ i\neq j}} \pi_i q_{ij} \EE\left[ \log(1+\Phi_{U,1}^{ij})\right] <0\end{align*}
	exists, where the equality follows from \eqref{paulsenxieta2} and \eqref{paulsenxieta3}.
	\item The following integral condition holds for some $j\in S$:
	\begin{align*}
	\int_{(1,\infty)} \frac{\log q}{\bar{A}_{-\xi} (\log q)} \PP_j\Bigg(\Bigg| \int_{(0,\taure_1(j)]} e^{\xi_{t-}} d\eta^d_t\, \Bigg|\in dq \Bigg)	<\infty, \end{align*}
	with $(\eta_t^d)_{t\geq 0}$ denoting  the resulting claim process including market shocks, i.e.
	$$\eta_t^d = \sum_{0< s\leq t} \Delta L_s^{J_s} + \sum_{n\geq 1}\sum_{\substack{(i,j)\in S\times S\\ i\neq j}} \frac{\Phi_{L,n}^{ij}}{1+\Phi_{U,n}^{ij}} \mathds{1}_{\lbrace J_{T_{n-1}}=i,J_{T_{n}}=j,T_n\leq t\rbrace}  , \quad t\geq 0,$$ 
	$e^{\xi}= \cE(U)^{-1}$, and  $\bar{A}_{-\xi}(x)$ in the present setting and due to \eqref{paulsenxieta2} and \eqref{paulsenxieta3} given by
	\begin{align*}\bar{A}_{-\xi}(x)&= \sum_{j\in\cS} \pi_j \Big( \gamma_{\xi^{j}} + \nu_{\xi^{j}}((1,\infty)) + \int_1^x \nu_{\xi^{j}}((y,\infty)) dy + \sum_{\substack{i\in\cS \\i\neq j}} q_{ij} \EE\left[(\Phi_{\xi,1}^{ij})^+\right]  \Big) \\
	&= \sum_{j\in S} \pi_j \Bigg( r^j-\frac12 \sigma_{U^j}^2 - \sum_{\substack{i\in S\\ i\neq j}} q_{ij} \EE\left[ \log^-(1+\Phi_{U,1}^{i,j})\right] \\ & \qquad + \lambda_{U^j} \Big(\int_{[-1+e^{-1},-1+e]} \log(1+y) F_{U^j}(dy) + \bar{F}_{U^j}(-1+e) + \int_1^x \bar{F}_{U^j} (-1+e^y) dy \Big) \Bigg),\end{align*}
	for $F_{U^j}(x)=\PP(S_{U^j,1}\leq x)$ denoting the cdf of the jumps of $U^j$, and $\bar{F}_{U^j}(x)= 1- F_{U^j}(x)$ the corresponding tail function.
\end{enumerate}
\end{remark}

\begin{proof}[Proof of Theorem \ref{thm-ruinMMpaulsen}]
	Obviously for any $t\geq 0$ it holds
	$$\frace_{(-\xi,\eta)}(t) = \int_{(0,t]} e^{\xi_{s-}} d\eta_s = \int_{(0,\infty)} e^{\xi_{s-}} d\eta_s - \int_{(t,\infty)} e^{\xi_{s-}} d\eta_s \quad \text{a.s.},$$
	and hence 
	$$ e^{-\xi_t} \left(u+ \int_{(0,t]} e^{\xi_{s-}} d\eta_s\right) +  \int_{(t,\infty)} e^{\xi_{s-}-\xi_t} d\eta_s  = e^{-\xi_t}\left(u+  \int_{(0,\infty)} e^{\xi_{s-}} d\eta_s \right) \quad \text{a.s.},$$
	i.e.
	$$ V_t^u + \int_{(t,\infty)} e^{\xi_{s-}-\xi_t} d\eta_s  = e^{-\xi_t}\left(u+  \frace_{(-\xi,\eta)}^\infty \right) \quad \text{a.s.}$$
	Moreover, as the event $\{\frace_{(-\xi,\eta)}^\infty \leq -u\}$ implies $\{\frace_{(-\xi,\eta)}(t) \leq -u \;\text{for some }t\geq 0\} = \{\tau_u<\infty\}$ we may therefore compute
	\begin{align}
	H_j(-u) &= \PP_j\left(\frace_{(-\xi,\eta)}^\infty \leq -u\right) =  \PP_j\left(\frace_{(-\xi,\eta)}^\infty \leq -u, \tau_u<\infty\right)  \nonumber \\
	&= \PP_j\left(  \int_{(\tau_u,\infty)} e^{\xi_{s-}-\xi_{\tau_u}} d\eta_s  \leq - V_{\tau_u}^u , \tau_u<\infty\right). \label{eq-paulsenproof1}
	\end{align}
	Now note that, given $J_{\tau_u}$, the exponential integral $\int_{(\tau_u,\infty)} e^{\xi_{s-}-\xi_{\tau_u}} d\eta_s$ is conditionally independent of $\tau_u$. Thus  by the law of total probability, and using the strong Markov property of the MAP $((\xi,\eta),J)$ (i.e. \eqref{DefMAP} or equivalently \eqref{MAPpropextend} for $s$ replaced by a stopping time) as it has been proven in \cite[Thm. 3.2]{CINLAR_MAP21972}, we obtain
	\begin{align*}
	\PP_j\left(  \int_{(\tau_u,\infty)} e^{\xi_{s-}-\xi_{\tau_u}} d\eta_s  \in \cdot \,, \tau_u<\infty\right) &= \sum_{k\in S} \PP_j(J_{\tau_u}=k) \PP_j\left(  \int_{(\tau_u,\infty)} e^{\xi_{s-}-\xi_{\tau_u}} d\eta_s  \in \cdot \,, \tau_u<\infty \Bigg|J_{\tau_u}=k\right)\\
	&= \sum_{k\in S} \PP_j(J_{\tau_u}=k) \PP_j\left(  \int_{(0,\infty)} e^{\xi'_{s-}} d\eta'_s  \in \cdot \,, \tau_u<\infty \Bigg|J_{\tau_u}=k=J'_0\right),
	\end{align*}
	for some independent copy $((\xi',\eta'),J')$ of $((\xi,\eta),J)$. Inserting this in \eqref{eq-paulsenproof1}  yields
	\begin{align*}
	H_j(-u)&= \sum_{k\in S} \PP_j(J_{\tau_u}=k) \PP_j\left(\frace_{(-\xi',\eta')}^\infty \leq -V_{\tau_u}^u , \tau_u<\infty \Big| J_{\tau_u}=k =J'_0\right)\\
	&= \sum_{k\in S} \PP_j(J_{\tau_u}=k) \PP_j\left(\frace_{(-\xi',\eta')}^\infty \leq -V_{\tau_u}^u \Big| \tau_u<\infty , J_{\tau_u}=k=J'_0 \right) \PP_j(\tau_u<\infty)\\
	&=  \sum_{k\in S} \PP_j(J_{\tau_u}=k)  \EE_j\left[ H_k (-V_{\tau_u}^u) \Big| \tau_u<\infty, J_{\tau_u}=k \right] \PP_j(\tau_u<\infty)\\
	&=    \EE_j\left[ H_{J_{\tau_u}} (-V_{\tau_u}^u) \Big| \tau_u<\infty \right] \PP_j(\tau_u<\infty),
	\end{align*}
	and solving this equation for the ruin probability finishes the proof.
\end{proof}

	\section*{Acknowledgements}
	
	The authors thank Paolo Di Tella for comments on an earlier draft of this paper that lead to improvements of the manuscript. 
	


	\end{document}